\definecolor{indigo}{rgb}{0.29, 0.0, 0.51}  
\theoremstyle{plain}
\newtheorem{theorem}{Theorem}
\newtheorem{corollary}[theorem]{Corollary}
\newtheorem{proposition}[theorem]{Proposition}
\newtheorem{lemma}[theorem]{Lemma}
\newtheorem{question}[theorem]{Question}
\theoremstyle{definition}
\newtheorem{definition}[theorem]{Definition}
\theoremstyle{remark}
\newtheorem{remark}[theorem]{Remark}
\newtheorem{example}[theorem]{Example}
\newtheorem{construction}[theorem]{Construction}
\numberwithin{theorem}{section}
\newcommand{\dfn}[1]{{\em #1}}        
\DeclareMathOperator{\bd}{\partial}   
\newcommand{\R}{\mathbb{R}}           
\newcommand{\Z}{\mathbb{Z}}           
\newcommand{\C}{\mathbb{C}}           
\newcommand{\NS}{{\mathbb{S}}}
\newcommand{\T}{{\mathbb{T}}}
\newcommand{\D}{{\mathbb{D}}}
\newcommand{\Op}{{\mathcal{O}p}}
\newcommand{\K}{\mathcal{K}}           
\renewcommand{\L}{\mathcal{L}}           
\newcommand{\FL}{\mathcal{FL}}           
\newcommand*\bigcdot{\mathpalette\bigcdot@{0.6}}
\newcommand*\bigcdot@[2]{\mathbin{\vcenter{\hbox{\scalebox{#2}{$\m@th#1\bullet$}}}}}
\DeclareMathOperator\twb{\overline {\tw}}                 
\DeclareMathOperator{\PB}{{{PB}}}          
\DeclareFontFamily{U} {cmr}{}
\DeclareFontShape{U}{cmr}{m}{n}{
  <-6> cmr5
  <6-7> cmr6
  <7-8> cmr7
  <8-9> cmr8
  <9-10> cmr9
  <10-12> cmr8
  <12-> cmr9}{}
\DeclareSymbolFont{Xcmr} {U} {cmr}{m}{n}
\DeclareMathSymbol{\Phi}{\mathord}{Xcmr}{8}
\DeclareMathOperator\tbb{\overline {\tb}}     
\mathchardef\mhyphen="2D
\newcommand{\std}{{\operatorname{std}}}
\newcommand{\GL}{\operatorname{GL}}
\newcommand{\Diff}{\operatorname{Diff}}
\newcommand{\SO}{\operatorname{SO}}
\newcommand{\U}{\operatorname{U}}
\newcommand{\Id}{{\operatorname{Id}}}
\newcommand{\rot}{\operatorname{rot}}
\newcommand{\tb}{\operatorname{tb}}
\newcommand{\tw}{\operatorname{tw}}
\newcommand{\Cont}{\operatorname{Cont}}
\newcommand{\Fr}{\operatorname{Fr}}
\newcommand{\Tight}{\operatorname{Tight}}
\newcommand{\image}{\operatorname{Image}}
\newcommand{\kalman}{{K\'{a}lm\'{a}n }}
\newcommand{\kalmans}{{K\'{a}lm\'{a}n's }}
\newcommand{\Emb}{{\mathfrak{Emb}}}
\begin{document} 

\title{Spaces of Legendrian cables and Seifert fibered links}

\author{Eduardo Fern\'{a}ndez}
\address{Department of Mathematics\\ University of Georgia\\ Athens\\ GA}
\email{eduardofernandez@uga.edu}

\author{Hyunki Min}
\address{Department of Mathematics\\ University of Georgia\\ Athens\\ GA}
\email{hkmin27@uga.edu}

    
\begin{abstract}
  We determine the homotopy type of the spaces of several Legendrian knots and links with the maximal Thurston--Bennequin invariant. In particular, we give a recursive formula of the homotopy type of the space of Legendrian embeddings of sufficiently positive cables, and determine the homotopy type of the space of Legendrian embeddings of Seifert fibered links, which include all torus knots and links, in the standard contact $3$-sphere, except when one of the link components is a negative torus knot. In general, we prove that the space of contact structures on the complement of a sufficiently positive Legendrian cable with the maximal Thurston-Bennequin invariant is homotopy equivalent to the space of contact structures on the complement of the underlying Legendrian knot, and prove that the space of contact structures on a Legendrian Seifert fibered space over a compact oriented surface with boundary is contractible. From this result, we find infinitely many new components of the space of Legendrian embeddings in the standard contact $3$-sphere that satisfy an injective $h$-principle. These include the spaces of Legendrian embeddings of an algebraic link with the maximal Thurston--Bennequin invariant. In particular, the inclusion of these Legendrian embedding spaces into the corresponding formal Legendrian embedding spaces is a homotopy injection. 
\end{abstract}

\maketitle

\section{Introduction}\label{sec:intro}

After the foundational work of Bennequin \cite{Bennequin} and Eliashberg \cite{Eliashberg:OT}, there has been a lot of research to understand botany problems in $3$-dimensional contact topology. Such problems usually deal with the classification of tight contact structures up to isotopy, the classification of contactomorphisms up to contact isotopy, and the classification of Legendrian/transverse knots and links up to Legendrian/transverse isotopy. One major question in these botany problems is to understand flexible/rigid phenomena (e.g. overtwisted contact structures follow the $h$-principle, while tight contact structures do not), which were studied a lot  in both directions, see \cite{Chekanov:LCH,Eliashberg:20Years, EliashbergFraser:unknot,EtnyreHonda:torusKnots, Giroux:parametric, GirouxMassot:bundles, Honda:classification1} for examples.

However, little is known about the botany problems for \em families \em of the previous objects. In this article, we focus on the study of families of Legendrian embeddings and contact structures on the complement of knots and links. In this context, regarding flexible/rigid phenomena, we find the following open question particularly interesting:

\begin{question}\label{q:flexibility}
  For $k \geq 1$, is there a non-contractible $k$-sphere of Legendrian embeddings into a contact $3$-manifold that is contractible as a $k$-sphere of formal Legendrian embeddings?
\end{question}

Work of Mart\'inez-Aguinaga, Presas together with the first author \cite{FMP:unknot} implies that the answer is \em negative \em when the underlying Legendrian is an unknot satisfying the relation $\tb+|\rot|=-1$; or a $(n,n)$-torus link with the maximal Thurston--Bennequin number. In this article we will show that the answer of the previous question is \em negative \em for infinitely many new Legendrian knots and links, all of them with the maximal Thurston--Bennequin number.  These Legendrian knots and links are obtained as iterated \em sufficiently positive \em generalized Legendrian cables (see Section~\ref{subsub:generalizedCables} for the definition). They contain every max-$\tb$ Legendrian representative of an isolated plane curve singularity, and every max-$\tb$ Legendrian positive Seifert fibered link, including all torus knots and links. For the latter, we will give a precise description of the homotopy type of the Legendrian embedding spaces. For sufficiently positive Legendrian cable \em knots \em with the maximal Thurston--Bennequin number, we will determine the homotopy type of the corresponding Legendrian embedding space in terms of the homotopy type of the Legendrian embedding space of the underlying Legendrian knot. This can be seen as the contact version of a known recursive formula for the smooth case \cite{Budney:knotsS3}. This, in particular, reduces Question \ref{q:flexibility} about the cable to the question about the underlying knot.

To determine the homotopy type of Legendrian embedding spaces, we will study the space of contact structures on the complements of Legendrian knots and links. In particular, we will show that the space of contact structures on the complement of a max-$\tb$ sufficiently positive generalized Legendrian cable is homotopy equivalent to the space of contact structures on the complement of the underlying Legendrian knots and links. Also, we will show that the space of contact structures on a Legendrian Seifert fibered space (see Definition~\ref{def:lsfs}) over a compact oriented surface with boundary is contractible. From this, we can also determine the homotopy type of the group of contactomorphisms on those Legendrian Seifert fibered spaces, which can be considered as a generalization of the result about the contact mapping class groups of Legendrian circle bundles \cite{GirouxMassot:bundles,FMP:unknot}.   

The study of Legendrian loops has become an important topic in $4$-dimensional symplectic topology in recent years after the foundational work of \kalman \cite{Kalman:exotic} and Ekholm--Honda--\kalman \cite{EHK:Functor}. The reason for this lies in the fact that, as proved in \cite{EHK:Functor}, the monodromy invariant in Legendrian contact homology \cite{Kalman:exotic} of a Legendrian loop, which can be computed from the $3$-dimensional picture in a combinatorial way, gives rise to a Hamiltonian isotopy invariant of the exact Lagrangian concordance associated to the Legendrian loop \cite{Chantraine:lagrangian} in the symplectization of the ambient contact manifold. The classification results provided in this article offer precise lists of Legendrian loops to work with. In \cite{Kalman:exotic}, \kalman used the monodromy invariant to prove the non-triviality of certain loops of Legendrian positive torus knots with the maximal Thurston-Bennequin number. However, all these loops were shown to be non-contractible as loops of formal Legendrians \cite{FMP:Legendrian}, leaving  Question \ref{q:flexibility} open for positive Legendrian torus knots with the maximal Thurston-Bennequin number. In this article, we provide a comprehensive negative answer to the question regarding these Legendrians. In particular, we give an independent proof of the non-contractibility of \kalmans loops as loops of formal Legendrians. 

More recently, Casals and Gao \cite{CasalsHonghao:infnite} considered many other Legendrian loops of some positive torus links with maximal Thurston-Bennequin number to prove the existence of infinitely many Lagrangian fillings in $(\D^4,\omega_\std)$. Again, our work implies that all these loops are also non-contractible at the formal level. In a broader context, Legendrian algebraic links with the maximal Thurston-Bennequin number have been systematically studied by Casals \cite{Casals:PlaneCurve}, who made relevant structural conjectures about the classification of exact Lagrangian fillings, up to Hamiltonian isotopy. Our work provides some control over the map from the loop space of the space of Legendrians to the space of Lagrangian fillings: two loops of Legendrians that are formally homotopic are actually homotopic through Legendrian loops and, therefore, their images are exact Hamiltonian isotopic. 

\subsection{Main results.}

Let $(M,\xi)$ be a tight contact $3$-manifold. Given a smooth knot type $K$ in $M$, we will denote by $\L^{\tbb}(K,(M,\xi))$ the space of Legendrian embeddings realizing the smooth knot type $K$ and with the maximal Thurston--Bennequin invariant. If $(M,\xi)$ is the standard contact $3$-sphere $(\NS^3,\xi_\std)$, we simply write $\L^{\tbb}(K)$ instead of $\L^{\tbb}(K,(\NS^3,\xi_\std))$. We denote the space of smooth embeddings by $\K (K,M)$, the same consideration as before when $M=\NS^3$ will also apply. We will also consider \em long \em embedding spaces, and these are defined as follows. Fix a point $(p_0,v_0)\in\NS(\xi)$ in the unit sphere bundle of $\xi$. The space of long smooth embeddings realizing the knot type $K$ is 
\[
  \K_{(p_0,v_0)}(K,M)=\{\gamma\in\K(K,M):(\gamma(0),\gamma'(0))=(p_0,v_0)\}.
\] 
The space of long Legendrian embeddings realizing the knot type $K$ and with the maximal Thurston--Bennequin number is 
\[
  \L^{\tbb}_{(p_0,v_0)}(K,(M,\xi)) = \L^{\tbb}(K,(M,\xi))\cap \K_{(p_0,v_0)}(K,M).
\] 
To refer to link embedding spaces we will follow the same convention: if $L$ is a smooth link type, we will write $\L^{\tbb}(L,(M,\xi))$ and $\K(L,M)$ for the corresponding embedding spaces. We note that our links are always parametrized (ordered and oriented). 

Notice that the spaces of smooth embeddings and long smooth embeddings are related by a natural fibration induced by an evaluation map 
\begin{equation}\label{eq:LongEmbeddingsVSEmbeddings}
  \K_{(p_0,v_0)}(K,M)\hookrightarrow \K (K,M)\rightarrow \NS(TM).
\end{equation}
In the same vein, for the Legendrian case there is a fibration 
\begin{equation}\label{eq:LongEmbeddingsVSEmbeddingsLegendrian}
  \L^{\tbb}_{(p_0,v_0)}(K,(M,\xi))\hookrightarrow \L^{\tbb}(K,(M,\xi))\rightarrow \NS(\xi).
\end{equation}

The standard contact structure on $\NS^3\subseteq\C^2$ is defined as the field of complex tangencies of the $3$-sphere. Therefore, when $(M,\xi)=(\NS^3,\xi_\std)$ the base space in (\ref{eq:LongEmbeddingsVSEmbeddingsLegendrian}) is the Lie group $\U(2)$ which acts by contactomorphism on $(\NS^3,\xi_\std)$. It follows that there is a homotopy equivalence 
\begin{equation}\label{eq:LongHomotopyEquivalence}
  \L^{\tbb}(K)\cong \U(2)\times\L^{\tbb}_{(p_0,v_0)}(K).
\end{equation}

\subsubsection{Generalized Legendrian cables}\label{subsub:generalizedCables}
Let $K$ be a null-homologous knot in a $3$-manifold $M$, $\mu$ a meridian of $K$ and $\lambda$ a Seifert longitude. Take $N$ to be a neighborhood of $K$. For relatively prime integers $p$ and $q$, we define a $(p,q)$-cable $K_{p,q}$ of $K$ to be a knot on $\bd N$ with homology class $p[\lambda] + q[\mu] \in H_1(\bd N)$ and define the slope of $K_{p,q}$ to be $q/p$. If $q/p > \tbb(K) + 1$, we say $K_{p,q}$ is a \dfn{sufficiently positive cable} of $K$. In the same vein, we define the $(np,nq)$-cable link of $K$ for $n > 1$, denoted by $K_{np,nq}$ to be $n$ parallel copies of $K_{p,q}$ on $\bd N$. We also say that $K_{np,nq}$ is sufficiently positive if $K_{p,q}$ it is sufficiently positive. 

\begin{remark}
  In the broadest sense, we say $K_{p,q}$ is sufficiently positive if $p/q > \omega(K)$, where $\omega(K)$ is the {\it contact width} of $K$. For more details, see \cite{CEM:cables, EtnyreHonda:UTP}. 
\end{remark}

Now we will generalize the cabling operation to links. Given a knot $K\subseteq M$, we define $0\cdot K := \emptyset$, $1\cdot K := K$ and $K_{0,0} := \emptyset$. Consider an $n$ component link $L=(K^1,\ldots,K^n)$ such that each component is null-homologous. Also consider two finite sequences 
\begin{itemize}
  \item $B=\{i_1,\ldots,i_n\}\in\{0,1\}^n$ and 
  \item $C=\{(p_1,q_1),\ldots,(p_n,q_n)\}\in (\Z^2)^n$.
\end{itemize}
We define the \em (generalized) $(B,C)$-cable of $L$ \em to be the link 
\[ 
  L^B_C=(i_1 \cdot K^1,i_2\cdot K^2,\ldots, i_n\cdot K^n, K^1_{p_1,q_1},\ldots,K^n_{p_n,q_n}).
\] 
See Figure~\ref{fig:Seifert-Hopf} for example, which is a $((1,1),((0,0),(2,3)))$-cable of the Hopf link. We say that the link $L^B_C$ is \em sufficiently positive \em if either $K^j_{p_j,q_j}$ is sufficiently positive or $(p_j,q_j) = (0,0)$ for every $j\in\{1,\ldots,n\}$. Now let $L = (L^1, \ldots, L^n)$ be a Legendrian link. We define \em sufficiently positive (generalized) Legendrian $(B,C)$-cable of $L$ \em as follows.  
\[
  L^B_C=(i_1 \cdot L^1,i_2\cdot L^2,\ldots, i_n\cdot L^n, L^1_{p_1,q_1},\ldots,L^n_{p_n,q_n})
\]
where $L^j_{p_j,q_j}$ is a slope $q_j/p_j$ ruling curve(s) on a standard neighborhood of $L^j$.

If $L\subseteq (M,\xi)$ is a Legendrian link, we will denote by $\Op(L)$ an open tubular neighborhood of $L$ of unspecified size. We will always assume that $\Op(L)$ is small enough so that $(\Op(L),\xi|_{\Op(L)})$ is a standard neighborhood of $L$. The link complement of $L$ is denoted by $(C(L,M),\xi) := (M \setminus \Op(L), \xi_{M \setminus \Op(L)})$, a contact manifold with convex torus boundary. If $(M,\xi)$ is the standard $(\NS^3,\xi_\std)$ we will write $(C(L),\xi_\std)=(C(L,\NS^3),\xi_\std)$. Denote by $\mathcal{C}(C(L,M),\xi)$ the space of contact structures on $C(L,M)$ that are isotopic to $\xi$ and coincide with $\xi$ on $\Op(\partial C(L))$. 

\begin{theorem}\label{thm:SufficientlyPositiveLinkCables}
  Let $L$ be a Legendrian link in a tight contact $3$-manifold $(M,\xi)$ with the max-$\tb$ number in its link type, and $L^B_C$ a sufficiently positive Legendrian $(B,C)$-cable of $L$. Then the spaces of contact structures $\mathcal{C}(C(L,M),\xi)$ and $\mathcal{C}(C(L^B_C,M),\xi)$ are homotopy equivalent. 
\end{theorem}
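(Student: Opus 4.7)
The plan is to construct a natural map between the two spaces of contact structures and to show it is a weak homotopy equivalence via a fibration whose fiber is contractible by parametric convex surface theory. I would first reduce to adding a single cable to $L$ by induction on the number of non-trivial entries in $C$. Write $V_j = \Op(L^j)$ with convex boundary torus $T_j$, and let $j=1$ be the component being cabled. Arranging $L^1_{p_1,q_1}$ to sit just outside $T_1$ as a ruling curve, the complement $C(L^B_C,M)$ differs from $C(L,M)$ only inside a collar of $T_1$: when $i_1 = 1$, one removes a standard neighborhood of the cable from $C(L,M)$; when $i_1 = 0$, one glues $V_1\setminus\Op(L^1_{p_1,q_1})$ onto $C(L,M)$ along $T_1$. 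In either case the modification is localized to a \emph{cabling piece} $P$ which is a solid torus (or thickened torus) with a standard neighborhood of the cable removed, carrying prescribed convex boundary data.

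The heart of the argument is the following parametric statement: the space $\mathcal{C}(P,\xi_P)$ of tight contact structures on the cabling piece, with boundary data matching the standard neighborhoods of $L^1$ and of $L^1_{p_1,q_1}$, is \emph{contractible}. Honda's classification of tight contact structures on $T^2\times I$ identifies the isotopy classes with minimal paths in the Farey graph between the two boundary slopes, and the sufficient positivity hypothesis $q_1/p_1 > \tbb(L^1)+1$ ensures that these slopes lie in the consecutive regime, leaving essentially one basic-slice decomposition and no room for Giroux torsion. I would then upgrade this isotopy-level classification to contractibility at the space level by performing the bypass analysis in families, in the spirit of the techniques of \cite{GirouxMassot:bundles, FMP:unknot}.

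Granted contractibility of $\mathcal{C}(P,\xi_P)$, I would fit the natural restriction/extension map between $\mathcal{C}(C(L^B_C,M),\xi)$ and $\mathcal{C}(C(L,M),\xi)$ into a Serre fibration with fiber $\mathcal{C}(P,\xi_P)$, using Giroux's parametric realization lemma to standardize contact structures near $T_1$ in families and to make the map well-defined up to homotopy. A contractible fiber then forces the map to be a weak, hence homotopy, equivalence. The principal obstacle I anticipate is the parametric contractibility of $\mathcal{C}(P,\xi_P)$: Honda's arguments are phrased up to isotopy, and promoting them to control of the higher homotopy of the full space of contact structures requires a coherent implementation of bypass moves in families, and it is precisely here that the sufficient positivity assumption must be used essentially to rule out non-trivial higher homotopy.
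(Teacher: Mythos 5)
Your overall architecture---localize the difference between the two complements to a cabling piece, prove that the space of contact structures on that piece is contractible, and promote this to a homotopy equivalence by a parametric gluing/fibration argument---is the same as the paper's. However, the two steps you defer are exactly where all the content lies, and your proposed routes for them are off target. For the fiber: you do not need to redo Honda's $T^2\times I$ classification in families. The cabling piece is either a standard neighborhood $N_j$ of $L^j$ (a standard contact solid torus, hence a standard contact handlebody) or a shell $N_j\setminus S_j$ between two nested standard neighborhoods (a Legendrian circle bundle over an annulus), and the contractibility of $\mathcal{C}$ for these pieces is already known (Theorem \ref{thm:StandardTightHandlebody} and the corresponding result for Legendrian circle bundles in \cite{FMP:unknot}); in particular it holds \emph{regardless} of the cable slope, so sufficient positivity plays no role there, contrary to where you place it.

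The genuine gap is the gluing step. The interface between $C(L,M)$ and the cabling piece is not the full torus $T_1$ but the annuli $\bd N_1\setminus\Op(L^1_{p_1,q_1})$, and the hypothesis $q_1/p_1>\tbb(L^1)+1$ is used precisely to guarantee that these are $n$-standard convex annuli with $n=|p_1\tb(L^1)-q_1|/\gcd(p_1,q_1)\geq 2$, whose boundary maximizes the twisting number. What you need is that gluing along such annuli induces a homotopy equivalence on spaces of tight contact structures (Theorem \ref{thm:GluingAnnuli}); this rests on the statement that the space of $n$-standard embeddings of the annulus is weakly equivalent to the space of smooth embeddings (Theorem \ref{thm:StandardAnnuli}), proved via the microfibration trick (Theorem \ref{thm:microfibration}) together with isotopy discretization (Theorem \ref{thm:discretization}) and the observation that for $n\geq 2$ every bypass on such an annulus is trivial (Lemma \ref{lem:DensityPropertyStdAnnuli}). ``Giroux's parametric realization lemma'' does not provide this: making convex interfaces standard \emph{in families}, with control over the holonomy of the dividing set, is a genuine gluing theorem and is the main technical novelty behind the result. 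Without it, your restriction map is not even well defined on the nose, and the claimed Serre fibration structure is unsubstantiated.
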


The main technique for this theorem is Theorem \ref{thm:GluingAnnuli}, a parametric gluing result for tight contact $3$-manifolds along standard convex annuli that is of independent interest. To the best of our knowledge this is the first gluing type result along convex surfaces that are not disks with Legendrian boundary nor spheres. The non-parametric version of these cases was treated by Colin \cite{Colin:gluing} and the parametric version was treated by Mart\'inez-Aguinaga, Presas and the first author \cite{FMP:unknot}; and also by Muñoz-Ech\'aniz and the first author \cite{FM:ContactDehn}.

We discuss some applications of the previous result. Recall that a \em formal Legendrian embedding \em  of a link $L$ into a contact $3$-manifold $(M,\xi)$ is a pair $(\gamma,F_s)$  such that
\begin{itemize}
  \item $\gamma\in \K(L,M)$ and 
  \item $F_s:TL\hookrightarrow \gamma^* TM$, $s\in[0,1]$, is a homotopy of bundle monomorphisms such that $F_0=d\gamma$ and $\image(F_1)\subseteq \gamma^* \xi$.
\end{itemize}

We denote the space of formal Legendrian embeddings realizing a given smooth link type $L$ by $\FL(L,(M,\xi))$ and when $(M,\xi)=(\NS^3,\xi_\std)$ we simply write $\FL(L)$. If $\gamma$ is a Legendrian embedding realizing the smooth link $L$ we will write $\FL^\gamma(L)$ to denote the path connected component of the space of formal Legendrian embeddings that contains $\gamma$. Similarly, we will denote by $\L^\gamma(L)$ the path connected component of the space of Legendrian embeddings containing $\gamma$.

\begin{definition}
  A Legendrian link $L\subseteq (\NS^3,\xi_\std)$ satisfies the $C$-property if the space $\mathcal{C}(C(L),\xi_\std)$ is contractible. 
\end{definition}

It is well-known to experts that the connectivity of the space of tight contact structures in the complement of a Legendrian $L$ in $(\NS^3,\xi_\std)$ implies that every Legendrian $\widetilde{L}$ formally Legendrian isotopic\footnote{Two Legendrian links are formally isotopic if they have the same classical invariants \cite{FMP:Legendrian,Murphy:Loose}.} to $L$ is actually Legendrian isotopic to $L$. In particular, if the space $\mathcal{C}(C(L),\xi_\std)$ is contractible, so $L$ satisfies the $C$-property, then every sphere of Legendrian embeddings, based at some parametrization of $L$, that is contractible as a sphere of formal Legendrian embeddings is contractible (see Proposition \ref{prop:FormalInjectivity}). This observations make Theorem \ref{thm:SufficientlyPositiveLinkCables} a powerful tool to prove injective $h$-principles in a recursive way under the cabling operations:

\begin{corollary}\label{cor:InjectiveHPrinciple}
  Let $L\subseteq (\NS^3,\xi_\std)$ be a Legendrian link with the maximal Thurston--Bennequin number in its link type, and $L^B_C$ a sufficiently positive Legendrian cable of $L$. Then if $L$ satisfies the $C$-property, then $L^B_C$ also satisfy the $C$-property. In particular, if $\gamma$ parametrizes $L^B_C$ then the inclusion 
  \[
    \L^\gamma(L^B_C)\hookrightarrow \FL^\gamma(L^B_C) 
  \]
  is a homotopy injection.
\end{corollary}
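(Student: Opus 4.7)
The plan is to derive Corollary~\ref{cor:InjectiveHPrinciple} from two inputs already developed in the paper: Theorem~\ref{thm:SufficientlyPositiveLinkCables} for the propagation of the $C$-property under cabling, and Proposition~\ref{prop:FormalInjectivity} (invoked just before the statement) for the ``injective $h$-principle'' conclusion. Accordingly, the argument should split into two short steps, with no substantial new work to be done beyond invoking these.

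First I would verify that the $C$-property passes from $L$ to $L^B_C$. Applying Theorem~\ref{thm:SufficientlyPositiveLinkCables} with $(M,\xi)=(\NS^3,\xi_{\std})$ yields a homotopy equivalence
\[
  \mathcal{C}(C(L),\xi_{\std}) \simeq \mathcal{C}(C(L^B_C),\xi_{\std}).
\]
Since $L$ is assumed to satisfy the $C$-property, the left-hand side is contractible, and this homotopy equivalence forces the right-hand side to be contractible as well. This is precisely the $C$-property for the cable $L^B_C$.

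For the ``in particular'' clause, the plan is to feed this contractibility into Proposition~\ref{prop:FormalInjectivity}: since $\mathcal{C}(C(L^B_C),\xi_{\std})$ is contractible, every $k$-sphere of Legendrian embeddings based at $\gamma$ which is nullhomotopic in $\FL^\gamma(L^B_C)$ is already nullhomotopic in $\L^\gamma(L^B_C)$. Letting $k\geq 0$ range, this amounts to injectivity of $\pi_k(\L^\gamma(L^B_C)) \to \pi_k(\FL^\gamma(L^B_C))$, i.e.\ the asserted homotopy injection.

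The hard part of the corollary has been absorbed into Theorem~\ref{thm:SufficientlyPositiveLinkCables}, whose proof relies on the parametric gluing result Theorem~\ref{thm:GluingAnnuli} along standard convex annuli; the only delicate point I would be careful about in writing the proof is keeping the parametrization $\gamma$ and the choice of $\Op(\partial C(L^B_C))$ consistent between the two inputs, so that Proposition~\ref{prop:FormalInjectivity} is applied to the intended path component of the Legendrian embedding space. This is pure bookkeeping and introduces no obstacle.
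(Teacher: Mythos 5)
Your proposal is correct and coincides with the paper's own (largely implicit) argument: the paper derives this corollary exactly by combining Theorem~\ref{thm:SufficientlyPositiveLinkCables} (applied in $(\NS^3,\xi_\std)$, using the max-$\tb$ hypothesis on $L$) with Proposition~\ref{prop:FormalInjectivity}. No gaps.
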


\begin{example} There are several examples to which Corollary~\ref{cor:InjectiveHPrinciple} applies.
\begin{itemize}
  \item Every Legendrian unknot such that $\tb+|\rot|=-1$ satisfies the $C$-property, see \cite{FMP:unknot}.

  \item Every max-$\tb$ Legendrian positive torus link satisfies the $C$-property by applying Corollary~\ref{cor:InjectiveHPrinciple} to the max-$\tb$ Legendrian unknot. 

  \item More generally, let $f:\C^2\rightarrow \C$ be a complex polynomial with an isolated singularity at the origin. The \em link of the singularity \em $B_f=f^{-1}(0)\cap \NS^3(\varepsilon)$, for sufficiently small $\varepsilon>0$, is naturally transverse to $\xi_\std$ and is the binding of an open book decomposition supporting $\xi_\std$. However, as observed by Casals \cite{Casals:PlaneCurve}, there is a unique max-$\tb$ Legendrian representative $L_f$ of $B_f$ since the smooth link type of $B_f$ is a sufficiently positive iterated torus knot. It follows from Corollary~\ref{cor:InjectiveHPrinciple} that $L_f$ satisfies the $C$-property. 
\end{itemize}
\end{example}

\subsubsection{Legendrian cable knots}
Now we restrict our attention to cable knots. Let $(p,q)$ be a pair of relatively prime integers. Consider a standard neighborhood $N = J^1\NS^1$ of the max-$\tb$ Legendrian unknot in $(\NS^3, \xi_\std)$. Foliate $\bd N$ by slope $q/p$ ruling curves, the max-$\tb$ Legendrian $(p,q)$-torus knots. Then we can construct a loop of max-$\tb$ Legendrian embeddings $\gamma^\theta_{p,q}$, for $\theta\in\NS^1$, of $(p,q)$-torus knots by rotating $N$ about the core. This loop was introduced by \kalman \cite{Kalman:exotic}, which is known to be have an infinite order as a loop of formal Legendrians \cite{FMP:Legendrian}. We now generalize this construction to any Legendrian knot. Given a Legendrian knot $L\subseteq(M,\xi)$, we may define the associated \kalmans loop around $L$ to be 
\[
  \gamma^\theta_{L_{p,q}}=\varphi_L\circ \gamma^\theta_{p,q}
\] 
where $\varphi_L:(J^1\NS^1,\xi_\std)\rightarrow(\Op(L),\xi)$ is a contactomorphism sending the $0$-section to $L$. The reader may check that homotopy class of the loop is independent of the contactomorphism $\varphi_L$, see Lemma \ref{lem:FramingLegendrians}. 

Moreover, in \cite{FMP:satellite} it a continuous satellite map between Legendrian embedding spaces is defined. Since $(p,q)$-cables are a special instance of the satellite construction there is a continuous map 
\[ 
  C_{p,q}:\L^{\tbb}_{(p_0,v_0)}(K)\rightarrow \L^{\tbb}_{(p_0,v_0)}(K_{p,q}).
\]
This map sends, up to homotopy, every $k$-sphere of Legendrian embeddings in $\L^{\tbb}_{(p_0,v_0}(K)$ to the \em cable sphere \em in $\L^{\tbb}_{(p_0,v_0)}(K_{p,q})$: the one obtained by cabling all the embedding simultaneously. 

These two constructions explain the geometric meaning of the following result, which generalizes a result of Hatcher (see Budney \cite{Budney:knotsS3}) to the Legendrian setting.

\begin{theorem} \label{thm:cable}
  Let $K_{p,q}$ be a sufficiently positive $(p,q)$-cable of a knot $K$ in $\NS^3$. There is a homotopy equivalence
  \[ 
    \L^{\tbb}_{(p_0,v_0)}(K_{p,q})\cong \L^{\tbb}_{(p_0,v_0)}(K)\times \NS^1.
  \] 
  Moreover, let $L$ be a max-$\tb$ representative of $K$. Then a generator of the $\NS^1$-factor in the above homotopy equivalence is represented by a loop homotopic to $\gamma^\theta_{L_{p,q}}$.
\end{theorem}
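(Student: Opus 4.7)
The plan is to define a natural map
\[
  \Phi: \L^{\tbb}_{(p_0,v_0)}(K)\times \NS^1 \to \L^{\tbb}_{(p_0,v_0)}(K_{p,q})
\]
by cabling at a prescribed angle, and to show it is a homotopy equivalence by comparing the evaluation fibrations of both sides landing in the space of contact structures on the link complement, using Theorem~\ref{thm:SufficientlyPositiveLinkCables} to identify those bases. The construction is explicit: given $(\gamma,\theta)$, take a standard contact tubular neighborhood $\Op(\gamma)$ (chosen continuously, which is possible because the space of standard neighborhoods of a given Legendrian is contractible), place the slope $q/p$ ruling curve at angular position $\theta$ on $\bd\Op(\gamma)$, and reparametrize as a long embedding with basepoint $(p_0,v_0)$. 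By construction, fixing $\gamma=L$ a max-$\tb$ representative and varying $\theta$ yields the \kalmans loop $\gamma^\theta_{L_{p,q}}$, and fixing $\theta$ recovers (up to homotopy) the satellite map $C_{p,q}$; this settles the moreover clause of the statement.

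To prove $\Phi$ is a homotopy equivalence, I set up the commutative diagram of Serre fibrations
\[
\begin{array}{ccc}
  \L^{\tbb}_{(p_0,v_0)}(K)\times \NS^1 & \xrightarrow{\Phi} & \L^{\tbb}_{(p_0,v_0)}(K_{p,q}) \\
  \big\downarrow & & \big\downarrow \\
  \mathcal{C}(C(K),\xi_\std) & \xleftarrow{\simeq} & \mathcal{C}(C(K_{p,q}),\xi_\std)
\end{array}
\]
in which the vertical maps send a Legendrian to the restriction of $\xi_\std$ to a model complement (the $\NS^1$-factor on the left being forgotten), and the bottom arrow is the restriction map induced by the inclusion $C(K)\hookrightarrow C(K_{p,q})$. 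The bottom arrow is a homotopy equivalence by Theorem~\ref{thm:SufficientlyPositiveLinkCables}. The fiber of the left map is homotopy equivalent to $\NS^1$: once the contact structure on $C(K)$ and the basepoint $(p_0,v_0)$ are fixed, the Legendrian representative of $K$ is determined up to a contractible choice by uniqueness of standard Legendrian neighborhoods, leaving the $\NS^1$-factor intact. The fiber of the right map is also $\NS^1$: fixing the contact structure on $C(K_{p,q})$ recovers, via the regluing behind Theorem~\ref{thm:SufficientlyPositiveLinkCables}, the underlying standard neighborhood of $K$ together with the boundary torus $\bd\Op(K)$, and the residual freedom in placing the cable $K_{p,q}$ on that torus is exactly the $\NS^1$ of ruling curves of slope $q/p$. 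By the construction of $\Phi$, the induced map on fibers is the identity on $\NS^1$, so the five-lemma applied to the long exact sequences of homotopy groups shows $\Phi$ is a weak equivalence; Whitehead's theorem upgrades this to the claimed homotopy equivalence since the spaces involved have the homotopy type of CW complexes.

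The principal obstacle is the right-hand fiber identification: showing that fixing the contact structure on $C(K_{p,q})$ leaves behind exactly one $\NS^1$'s worth of Legendrian cables, with no hidden components or extra homotopy. This relies crucially on the sufficient-positivity hypothesis through Honda's classification of tight contact structures on $T^2\times I$: sufficient positivity ensures the layer between $\bd\Op(K)$ and $\bd\Op(K_{p,q})$ sits in the minimally-twisting regime and hence carries a unique tight contact structure up to isotopy rel boundary, so the entire boundary-torus configuration can be reconstructed canonically from the contact structure on $C(K_{p,q})$. Everything upstream (the Serre-fibration property of the restriction maps, the contractibility of the space of standard Legendrian neighborhoods, and the identification of the \kalmans loop as the $\NS^1$-fiber generator) follows either from standard contact-isotopy-extension arguments or directly from the construction of $\Phi$.
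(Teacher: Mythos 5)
There is a genuine gap, and it sits at the heart of your argument: the two vertical ``evaluation fibrations'' $\L^{\tbb}_{(p_0,v_0)}(K)\times\NS^1\to\mathcal{C}(C(K),\xi_\std)$ and $\L^{\tbb}_{(p_0,v_0)}(K_{p,q})\to\mathcal{C}(C(K_{p,q}),\xi_\std)$ do not exist as you describe them, and the fibers you assign to them are wrong. The space $\mathcal{C}(C(K),\xi_\std)$ consists of contact structures on a \emph{fixed model} manifold; to ``restrict $\xi_\std$ to a model complement'' of a varying Legendrian you must choose an identification of the actual complement with the model, and the space of such identifications is a torsor over $\Diff(C(K))$, which is not contractible. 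The correct relationship -- and the one the paper exploits -- is that $\mathcal{C}(C(K),\xi_\std)$ is (componentwise) the homotopy \emph{fiber} of the inclusion $\L^{\gamma}_{(p_0,v_0)}(K)\hookrightarrow\K_{(p_0,v_0)}(K)$, coming from comparing the fibrations $\Cont(C(L),\xi_\std)\to\Cont(\D^3,\xi_\std)\to\L_{(p_0,v_0)}(L)$ and $\Diff(C(K))\to\Diff(\D^3)\to\K_{(p_0,v_0)}(K)$ with the Gray fibration; it is not the base of a fibration out of $\L^{\tbb}_{(p_0,v_0)}(K)$. Your fiber computation fails concretely: the assertion that fixing the contact structure on the complement determines the Legendrian up to contractible choice would force $\L^{\tbb}_{(p_0,v_0)}(K)\simeq\mathcal{C}(C(K),\xi_\std)$, which is already false for $K=T_{2,3}$, where the left side is $\NS^1$ (Kálmán's loop) and the right side is contractible. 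The homotopy you are discarding is exactly that of $\Diff(C(K))$, i.e.\ of the smooth embedding space, and no argument that uses only Theorem~\ref{thm:SufficientlyPositiveLinkCables} can recover it. Relatedly, your appeal to uniqueness of minimally twisting tight structures on $T^2\times I$ is off target: the region between $\bd\Op(K)$ and $\bd\Op(K_{p,q})$ is a cable space (a Seifert fibration over an annulus with one singular fiber), not a $T^2\times I$, and even basic slices are not unique rel boundary.

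The paper's proof supplies precisely the two inputs your proposal omits. First, the $\pi_0$ statement is imported from the Chakraborty--Etnyre--Min classification (Theorem~\ref{thm:LegCables-pi0}); a loop-space or five-lemma argument cannot see the number of components. Second, after translating via Lemmas~\ref{lem:FramingKnots}--\ref{lem:LegVSCont} the problem becomes one about contactomorphism groups, namely that $(f,n)\mapsto f\cup_T\tau^n$ gives a homotopy equivalence $\Cont(C(L),\xi_\std)\times\Z\to\Cont(C(L_{p,q}),\xi_\std)$, where $\tau$ is the contact torus twist. The $\pi_0$ part of that is proved geometrically (making a contactomorphism preserve the companion torus using the Legendrian Seifert fibration structure of the cable space and Theorem~\ref{thm:StandardAnnuli}), and the higher homotopy groups are handled by a five lemma applied to the two Gray fibrations $\Cont_0\to\Diff_0\to\mathcal{C}$, where the $\Diff_0$ column is identified using Budney's smooth recursive formula and the $\mathcal{C}$ column using Theorem~\ref{thm:SufficientlyPositiveLinkCables}. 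If you want to salvage your approach, the map to aim for is not $\L^{\tbb}_{(p_0,v_0)}(K_{p,q})\to\mathcal{C}(C(K_{p,q}),\xi_\std)$ but a ``companion'' fibration $\L^{\tbb}_{(p_0,v_0)}(K_{p,q})\to\L^{\tbb}_{(p_0,v_0)}(K)$ with fiber the $\NS^1$ of ruling curves; constructing that continuously is exactly the convex-surface-theoretic content (standard annuli, Theorem~\ref{thm:StandardAnnuli}) that cannot be bypassed, and you would still need the smooth and $\pi_0$ inputs above.
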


We combine the previous result with the contractibility of the space of long Legendrian unknots with $\tb=-1$ in $(\NS^3,\xi_\std)$ \cite{FMP:unknot} and obtain the following corollary.

\begin{corollary}\label{cor:IteratedTorus}
  Let $K$ a sufficiently positive $n$-iterated torus knot contained in a ball $B \subset M$.
  \begin{itemize}
    \item [(i)] Let $(M,\xi)$ be a tight contact $3$-manifold. The natural inclusion $$ \L^{\tbb}_{(p_0,v_0)}(K,(M,\xi))\hookrightarrow\K_{(p_0,v_0)}(K,M)$$ is a homotopy equivalence.
    \item [(ii)] If $M = \NS^3$, there is a homotopy equivalence $$ \L^{\tbb}(K) \cong \U(2)\times (\NS^1)^n.$$  
  \end{itemize}
\end{corollary}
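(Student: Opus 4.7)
The plan is to deduce both statements by iterating Theorem~\ref{thm:cable}, using the Legendrian unknot as the base case.

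For part (ii), I would write $K = K_n$, where $K_0 = U$ is the Legendrian unknot with $\tb(U) = -1$ and each $K_i$ is a sufficiently positive $(p_i,q_i)$-cable of $K_{i-1}$. By \cite{FMP:unknot} the long Legendrian unknot space $\L^{\tbb}_{(p_0,v_0)}(U)$ is contractible, so $n$ applications of Theorem~\ref{thm:cable} give
\[
  \L^{\tbb}_{(p_0,v_0)}(K) \;\simeq\; \L^{\tbb}_{(p_0,v_0)}(U)\times(\NS^1)^n \;\simeq\; (\NS^1)^n.
\]
Combining with the homotopy equivalence~\eqref{eq:LongHomotopyEquivalence} yields $\L^{\tbb}(K) \simeq \U(2)\times(\NS^1)^n$, proving (ii).

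For part (i), the strategy is to run the same induction smoothly, in parallel with the Legendrian argument, and then compare. Budney's recursive decomposition of the space of long smooth knots in $\NS^3$ \cite{Budney:knotsS3}, together with Hatcher's contractibility of the space of long smooth unknots in $\NS^3$, produces the matching formula $\K_{(p_0,v_0)}(K,\NS^3)\simeq(\NS^1)^n$ via the analogous cabling recursion. The Legendrian \kalmans generator $\gamma^\theta_{L_{p,q}}$ from Theorem~\ref{thm:cable}, after forgetting the Legendrian condition, is smoothly isotopic to Budney's cable-loop generator (both are given by the same rotation of the tubular neighborhood of the underlying knot), so the inclusion $\L^{\tbb}_{(p_0,v_0)}\hookrightarrow\K_{(p_0,v_0)}$ intertwines the two cabling equivalences. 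Since it is a homotopy equivalence at the unknot base case, an induction on $n$ shows that it remains a homotopy equivalence for every iterated torus knot $K$.

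For a general tight $(M,\xi)$ with $K$ contained in a ball $B\subset M$, I would reduce to the case $(\NS^3,\xi_\std)$ via a parametric Darboux/Cerf argument. The space of contact embeddings $(B^3,\xi_\std)\hookrightarrow(M,\xi)$ sending a fixed basepoint and tangent vector to $(p_0,v_0)$ should be contractible, as should its smooth counterpart by Hatcher's work on smooth embeddings of balls, and both $\L^{\tbb}_{(p_0,v_0)}(K,(M,\xi))$ and $\K_{(p_0,v_0)}(K,M)$ can be arranged as total spaces of parallel fibrations with these contractible bases and fibers the corresponding $\NS^3$ embedding spaces; the $\NS^3$ equivalence then descends to $M$.

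The main obstacle will be this last reduction step: setting up the parametric Darboux/Cerf fibration so that it is genuinely compatible with the Legendrian-to-smooth inclusion and truly captures embeddings that are not a priori confined to the ball $B$. This requires the parametric statement that any smooth (resp.\ Legendrian) representative of $K$ in $M$ can be isotoped into a fixed (contact) Darboux neighborhood, together with the contractibility of the corresponding space of such charts in both categories.
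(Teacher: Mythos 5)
Your proof of (ii) is exactly the paper's argument: iterate Theorem~\ref{thm:cable} starting from the contractible space of long Legendrian unknots with $\tb=-1$, match it against Budney's smooth recursion, and apply the equivalence~(\ref{eq:LongHomotopyEquivalence}). For (i) your overall route is also the paper's, but the step you correctly flag as the main obstacle is precisely Proposition~\ref{prop:FromS3ToM3}, and the way you propose to carry it out would not quite work as stated: there is no honest fibration of $\K_{(p_0,v_0)}(K,M)$ (or of $\L^{\tbb}_{(p_0,v_0)}(K,(M,\xi))$) over a space of Darboux charts, because a representative of $K$ in $M$ need not lie in any ball, so the ``fiber'' over a chart is not well defined. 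The paper sidesteps this by comparing not the embedding spaces themselves but the \emph{homotopy fibers} of the two vertical inclusions $i_0\colon\L^{\gamma}(K,(\D^3,\xi_\std))\hookrightarrow\K(K,\D^3)$ and $i_1\colon\L^{\gamma}(K,(M,\xi))\hookrightarrow\K(K,M)$: a point of $\operatorname{Hofiber}(i_1)$ is a smooth path starting at the fixed $\gamma$ (which does lie in the ball) and ending at a Legendrian, and such a family of paths can be pushed back into the ball using the parametric space of standard convex spheres (Theorem~\ref{thm:StandardSpheres}) together with the smooth and contact isotopy extension theorems. Once $\operatorname{Hofiber}(j)\simeq\operatorname{Hofiber}(i)$ is known to be contractible, one still needs surjectivity on $\pi_0$ in $M$, which the paper gets from Theorem~\ref{thm:LegCables-pi0} together with Eliashberg--Fraser; you should add that to your outline. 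Finally, your identification of the K\'alm\'an generator with the smooth generator of the $\NS^1$-factor is the content of the $\pi_2$-invariant computation (Corollary~\ref{cor:SurjetivityLoopsTorusKnots}) rather than something immediate, though the conclusion is as you state.
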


Note that the space of long smooth $n$-iterated torus knots in $\NS^3$ is homotopy equivalent to $(\NS^1)^n$, see \cite{Budney:knotsS3}. Therefore, in view of the homotopy equivalence (\ref{eq:LongHomotopyEquivalence}), the homotopy equivalence stated in Corollary~\ref{cor:IteratedTorus}.(ii) is a particular case of the one stated in Corollary~\ref{cor:IteratedTorus}.(i). In the proof, however, we will deduce (i) from the specialized case of (ii). To do so we will prove Proposition \ref{prop:FromS3ToM3} that is of independent interest and reduces the study of Legendrian embeddings in a general tight $3$-manifold that lie in a Darboux ball to the case of the standard $(\NS^3,\xi_\std)$.

\subsubsection{Legendrian Seifert fibered links}

A \dfn{Seifert fibered link} is a link in $S^3$ of which the complement is a Seifert fibered space. Let $(p,q)$ be a pair of relatively prime integers and $n$ a positive integer. There are four types of Seifert fibered links in $S^3$: a torus link $T_{np,nq}$, a Seifert link $S_{np,nq}$, a Seifert-Hopf link $R_{np,nq}$ and a keychain link $H_n$. See Figure~\ref{fig:torus},~\ref{fig:Seifert},~\ref{fig:Seifert-Hopf},~\ref{fig:keychain} and for more details, see Section~\ref{sec:lsflink}. In this paper, we only consider \em positive \em Seifert fibered links, meaning that $p,q > 0$ (we assume $H_n$ is vacuously positive). 

Legendrian positive Seifert fibered links are special cases of sufficiently positive generalized Legendrian cables. For the max-$\tb$ Legendrian positive Seifert fibered links we have a complete description of the contact structure on the complement, which is a Legendrian Seifert fibration over a disk. This allows us to completely describe the homotopy type of the embedding spaces. For every $n \geq 1$  we will denote by $\PB_n$ the pure braid group of $n$-strands. Recall that $K(G,1)$ denotes an Eilenberg--MacLane space.

\begin{theorem}\label{thm:main-SFlinkS3}
  Every max-$\tb$ Legendrian positive Seifert fibered link satisfies the $C$-property. Moreover, there are homotopy equivalences
  \begin{itemize}
    \item [(i)] $ \L^{\tbb}(T_{np,nq})\cong \U(2) \times K(\PB_{n+1}\times \Z^{n-1}, 1)$

    \item [(ii)] $ \L^{\tbb}(S_{np,nq})\cong \U(2) \times K(\PB_{n+1}\times \Z^n, 1) $

    \item [(iii)] $ \L^{\tbb}(R_{np,nq})\cong \U(2) \times K( \PB_{n+1}\times\Z^{n+1}, 1) $

    \item [(iv)] $ \L^{\tbb}(H_n)\cong \U(2) \times K( \PB_{n-1}\times \Z^{n-1}, 1 ) $
  \end{itemize}
\end{theorem}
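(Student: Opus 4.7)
The plan is to reduce Theorem~\ref{thm:main-SFlinkS3} to a combination of Corollary~\ref{cor:InjectiveHPrinciple} and the structural result on contactomorphisms of Legendrian Seifert fibered spaces over surfaces with boundary mentioned in the introduction. First, I would realize each of the four families as an iterated sufficiently positive generalized Legendrian cable of the max-$\tb$ Legendrian unknot $U$: $T_{np,nq}$ as the $((0),((np,nq)))$-cable of $U$, $S_{np,nq}$ as the $((1),((np,nq)))$-cable of $U$, $H_n$ as the $((0),((n,n)))$-cable of $U$ (slope $1 > 0 = \tbb(U)+1$), and $R_{np,nq}$ as an iterated cable, first building the max-$\tb$ Legendrian Hopf link as the $((1),((1,1)))$-cable of $U$ and then taking a $((1,1),((0,0),(np,nq)))$-cable. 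Since $U$ satisfies the $C$-property by \cite{FMP:unknot}, iterating Corollary~\ref{cor:InjectiveHPrinciple} yields the $C$-property for all four families.

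Using the $C$-property together with the homotopy equivalence $\L^{\tbb}(L) \simeq \U(2) \times \L^{\tbb}_{(p_0,v_0)}(L)$ from \eqref{eq:LongHomotopyEquivalence}, I would reduce the problem to computing the long embedding space $\L^{\tbb}_{(p_0,v_0)}(L)$. This long space fits into a fibration whose base involves the space of contact structures $\mathcal{C}(C(L),\xi_\std)$ on the complement, and the $C$-property makes this base contractible. Therefore $\L^{\tbb}_{(p_0,v_0)}(L)$ becomes homotopy equivalent to a classifying space built from the contactomorphism group of $C(L)$ rel boundary, together with the standard rotational degrees of freedom in neighborhoods $\Op(L)$ of each Legendrian component.

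The complement $C(L)$ of each of the four link families is naturally a Legendrian Seifert fibered space: for $T_{np,nq}$, $S_{np,nq}$ and $R_{np,nq}$ it fibers over a disk whose base orbifold records the data of the $(p,q)$-Seifert fibration of $\NS^3$ together with the removed regular fibers, amounting to $n+1$ marked points in each case; for $H_n$ the relevant fibration of $\NS^3$ is the Hopf fibration with no singular fibers, so the base is a disk with $n-1$ marked points. Applying the structural theorem that the identity component of the contactomorphism group of such a Legendrian Seifert fibered space rel boundary is a $K(G,1)$ with $G$ the mapping class group of the base orbifold (i.e.\ the pure braid group $\PB_{n+1}$, respectively $\PB_{n-1}$) extended by a lattice of rotations along Legendrian fibers, and bookkeeping the fiber rotations contributed by neighborhoods of each link component versus those absorbed by the $\U(2)$-action, one obtains the $\Z^{n-1}$, $\Z^n$, $\Z^{n+1}$, $\Z^{n-1}$ factors in cases (i)--(iv). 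The main obstacle will be the precise identification of this stabilizer contactomorphism group with the appropriate $K(G,1)$: one has to match pure braid generators with compactly supported diffeomorphisms of the punctured disk base and track each rotational degree of freedom through the chain of restriction fibrations that relate $\L^{\tbb}_{(p_0,v_0)}(L)$ to the Seifert-fibered complement.
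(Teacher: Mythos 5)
Your overall architecture for the homotopy computation matches the paper's: the complement of each link is a Legendrian Seifert fibration, $\Omega\L^{\tbb}_{(p_0,v_0)}(L)\cong\Cont(C(L),\xi_\std)$ via the contact isotopy extension fibration, and the contactomorphism group is homotopically discrete and isomorphic to the mapping class group of the base. However, there are two genuine gaps. First, your realization of the keychain link is wrong: the $((0),((n,n)))$-cable of the unknot is the $(n,n)$-torus link, in which every pair of components is Hopf-linked, whereas $H_n$ is a connected sum of $n-1$ Hopf links (one ring plus $n-1$ pairwise unlinked meridians); these differ already for $n=3$, and indeed your proposed identification would output the formula of case (i) with $\PB_{n+1}$ rather than $\PB_{n-1}$. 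The correct cable description of $H_n$ uses meridional $(0,1)$-cables, but then the annuli along which one glues are only $1$-standard (the count $n_j=|p_j\tb(L^j)-q_j|/m_j$ equals $1$), so Theorem~\ref{thm:SufficientlyPositiveLinkCables} and Corollary~\ref{cor:InjectiveHPrinciple} do not apply as stated. This is precisely why the paper derives the $C$-property for all four families directly from Theorem~\ref{thm:Main-LSB} applied to the Legendrian Seifert fibered structure on the complement (Propositions~\ref{prop:complement-torus}, \ref{prop:complement-Seifert}, \ref{prop:complement-Seifert-Hopf}, \ref{prop:complement-keychain}), invoking the extra hypothesis for twisting number $-1$ in the keychain case, rather than iterating the cabling corollary. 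Your cabling route is fine for (i)--(iii) but cannot close case (iv).

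Second, you never establish connectivity of $\L^{\tbb}(L)$. Your argument identifies each path component of $\L^{\tbb}_{(p_0,v_0)}(L)$ with a $K(G,1)$, but the stated theorem asserts the whole space is $\U(2)\times K(G,1)$, which requires $\pi_0(\L^{\tbb}(L))=1$. The $C$-property only shows that formally isotopic max-$\tb$ representatives are Legendrian isotopic; one still needs the uniqueness statements of Theorems~\ref{thm:pi0-torus}, \ref{thm:pi0-Seifert}, \ref{thm:pi0-Sefert-Hopf} and \ref{thm:pi0-keychain}, which for links also handle the ordering of components (e.g.\ swapping parallel ruling curves inside an $I$-invariant neighborhood). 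Finally, a smaller imprecision: the bases are the surfaces $\NS^2_{n,m}$ (sphere with $n$ boundary circles and $m$ marked points), not disks with marked points; the $\Z^{n-1}$, $\Z^n$, $\Z^{n+1}$, $\Z^{n-1}$ factors arise from $\pi_1(\GL^+(2,\R))$ at the extra boundary components as in Corollary~\ref{cor:lsfs-pure-braid}, so your ``bookkeeping of fiber rotations'' needs to be pinned to that computation rather than to rotations absorbed by $\U(2)$.
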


\begin{remark} 
  The appearance of the pure braid groups in the previous statement is not a coincidence. The complement of all these links is a Legendrian Seifert fibration over the base space $\NS^2_{n,m}$: the $2$-sphere with $n$-holes and $m$ marked points that correspond to the singular fibers. Every element of the smooth mapping class group of the base $\pi_0(\Diff(\NS^2_{n,m}))\cong \PB_{n+m-1}\times \Z^{n-1}$ will give rise to a unique loop of Legendrian embeddings by producing loops of fibers combined with a possible reparametrization of the fibers. The content of our result is that, up to the action of the group of unitary matrices, every family of links can be understood in this way.
\end{remark}

As mentioned above, this results follows from the fact that the complement is a Legendrian Seifert fibered space. In particular, we determine the homotopy type of the group of contactomorphisms of a Legendrian Seifert fibration over a compact oriented surface with boundary that generalizes the result about Legendrian circle bundles \cite{FMP:unknot,Giroux:bundles} to the singular case.

\begin{theorem}\label{thm:Main-LSB}
  Let $(M,\xi)$ be a Legendrian Seifert fibration over a compact orientable surface $\Sigma^g_{n,m}$ with genus $g$, $n$ boundary components, $n>0$; and $m$ singular points. If the twisting number of regular fibers is $-1$, then we further assume the regular fiber is nontrivial in $\pi_1(M)$. Then, 
  \begin{itemize}
    \item [(i)] The space $\mathcal{C}(M,\xi)$ is contractible; and 
    \item [(ii)] The group $\Cont(M,\xi)$ is homotopy equivalent to $\pi = \pi_0(\Diff(\Sigma^g_{n,m}))$, the mapping class group of the base surface. In particular, it is homotopically discrete.
  \end{itemize}
\end{theorem}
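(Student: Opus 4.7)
The approach is to decompose $(M,\xi)$ along the convex tori that bound standard neighborhoods of the singular fibers, reducing Theorem~\ref{thm:Main-LSB} to two pieces whose spaces of contact structures are already understood: a Legendrian circle bundle over a surface with boundary, and a finite collection of contact solid tori. Concretely, let $N_1,\dots,N_m$ be pairwise disjoint open standard neighborhoods of the singular fibers and set $M_0 = M \setminus \bigcup_{i=1}^m N_i$. Then $(M_0,\xi|_{M_0})$ is a Legendrian circle bundle over the surface $\Sigma^g_{n+m}$, and each $\partial N_i$ is a convex torus whose dividing set is determined by the corresponding Seifert invariants together with the twisting number of the regular fiber.

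For part (i), the first step is to observe that $\mathcal{C}(M_0,\xi|_{M_0})$ is contractible. This is the parametric version of Giroux--Massot's classification of contact structures on Legendrian circle bundles over surfaces with boundary \cite{GirouxMassot:bundles}, as developed in \cite{FMP:unknot}. The second step is that the space of contact structures on each solid torus $N_i$ isotopic rel boundary to $\xi|_{N_i}$ is contractible: as $N_i$ is a standard neighborhood of its core Legendrian, this space is a contractible Darboux chart space by classical convex surface theory. The third step is a parametric gluing along the convex tori $\partial N_i$, in the spirit of Theorem~\ref{thm:GluingAnnuli} and of the previous results \cite{Colin:gluing, FMP:unknot, FM:ContactDehn}. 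Combining the three yields contractibility of $\mathcal{C}(M,\xi)$.

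For part (ii), consider the evaluation fibration
\[
  \Cont(M,\xi) \hookrightarrow \Diff(M,\partial M) \longrightarrow \mathcal{C}(M,\xi),
\]
where $\Diff(M,\partial M)$ denotes diffeomorphisms that are the identity near $\partial M$ and the right-hand map is given by pushforward of $\xi$. By part (i) the base is contractible, so $\Cont(M,\xi) \simeq \Diff(M,\partial M)$. Standard arguments on Seifert fibrations over surfaces with nonempty boundary now show that every diffeomorphism rel boundary is isotopic rel boundary to a fiber-preserving one, and the space of fiber-preserving diffeomorphisms is homotopy equivalent to the diffeomorphism group of the base $\Sigma^g_{n,m}$ fixing the boundary and respecting the Seifert data at the marked points. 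By the classical results of Earle--Eells and Earle--Schatz on diffeomorphism groups of surfaces with nonempty boundary, the identity component of this last group is contractible, and hence it is homotopy equivalent to its mapping class group $\pi$.

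The main obstacle I expect is the parametric gluing along convex tori. Previous parametric gluing results, including Theorem~\ref{thm:GluingAnnuli}, handle convex disks, spheres, or annuli, all of which come with a controlled Legendrian boundary; convex tori have no boundary and their dividing sets carry additional flexibility (number of dividing curves, bypass moves) that must be controlled uniformly across families. The hypothesis on the twisting number is precisely what is needed here: if the regular fiber were both $-1$-twisted and nullhomotopic in $M$, it could bound a Legendrian disk and produce an overtwisted disk inside $M$, breaking the parametric model; the assumption that the fiber is nontrivial in $\pi_1(M)$ rules this pathology out and allows the gluing to proceed uniformly.
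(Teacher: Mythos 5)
Your proposal diverges from the paper's argument in two places, and both divergences are genuine gaps rather than alternative routes. For part (i), you decompose $M$ along the convex \emph{tori} $\partial N_i$ bounding neighborhoods of the singular fibers and then invoke ``a parametric gluing along the convex tori $\partial N_i$, in the spirit of Theorem~\ref{thm:GluingAnnuli}.'' No such result exists in the paper or in the literature it cites: Theorem~\ref{thm:GluingAnnuli} is stated only for $n$-standard convex \emph{annuli} whose Legendrian boundary maximizes the twisting number, and the introduction explicitly notes that this is the first parametric gluing result along convex surfaces other than disks with Legendrian boundary and spheres. You correctly flag this as the main obstacle, but the obstacle is the proof, and your proposal does not overcome it. The paper sidesteps tori entirely: it cuts $(M,\xi)$ along \emph{vertical annuli} $A=\alpha\times\NS^1$ lying over essential arcs $\alpha$ in the base, which are $|t|$-standard convex annuli with Legendrian regular fibers as boundary, and argues by induction on $g+n$ until the pieces are Legendrian Seifert fibrations over a disk with one singular fiber; these are standard contact solid tori (Lemma~\ref{lem:lsfs-one-singular}), handled by Theorem~\ref{thm:StandardTightHandlebody}. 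Your reading of the $t=-1$ hypothesis is also not the actual mechanism: it is needed so that the $1$-standard annuli unwrap in a tight covering, which is the extra hypothesis in Theorem~\ref{thm:StandardAnnuli}/Theorem~\ref{thm:GluingAnnuli} for $n=1$ (it rules out effective bypasses meeting only two dividing curves, which would change the holonomy), not to exclude an overtwisted disk.

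For part (ii), the claim $\Cont(M,\xi)\simeq\Diff(M,\partial M)$ is false, and the argument that produces it is broken: the pushforward map $\Diff(M,\partial M)\to\mathcal{C}(M,\xi)$ does not land in $\mathcal{C}(M,\xi)$ (a vertical Dehn twist along a vertical annulus carries $\xi$ to a contact structure with different holonomy, hence not isotopic to $\xi$ rel boundary). The Gray fibration must be restricted to identity components, $\Cont_0(M,\xi)\to\Diff_0(M)\to\mathcal{C}(M,\xi)$, and with part (i) plus Hatcher's contractibility of $\Diff_0(M)$ this only yields that $\Cont(M,\xi)$ is homotopically discrete. Your conclusion would in fact contradict the theorem: for the Legendrian circle bundle over the annulus ($M=T^2\times[0,1]$) one has $\pi_0(\Diff(M,\partial M))\cong\Z^2$, generated by the two Dehn twists, while $\pi_0(\Diff(\Sigma^0_2))\cong\Z$; the content of the theorem is precisely that contactomorphisms realize only the subgroup coming from the base, because a contactomorphism preserving a standard annulus must preserve the holonomy of its dividing set (Proposition~\ref{prop:rigidityContacto}) and so cannot effect an independent vertical twist. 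The paper's computation of $\pi_0(\Cont(M,\xi))$ is therefore a separate contact-geometric argument: lift base diffeomorphisms via Proposition~\ref{prop:lift} for surjectivity onto the image, use Waldhausen to make an arbitrary contactomorphism fiber-preserving up to smooth isotopy, and then cut along the standard vertical annuli and apply Theorem~\ref{thm:StandardAnnuli}, Proposition~\ref{prop:rigidityContacto}, Lemma~\ref{lem:lsfs-one-singular} and Theorem~\ref{thm:StandardTightHandlebody} to trivialize the remainder. This step cannot be replaced by a purely smooth computation of $\pi_0(\Diff(M,\partial M))$.
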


\subsection*{Further questions}

One natural question regards the space of Legendrian embeddings that do not maximize the $\tb$ invariant. For instance, let $\gamma$ be an embedding of a Legendrian positive torus knot which does not maximize the $\tb$ invariant, but maximizes the self-linking number of its transverse push-off.

\begin{question}
    Assume that $\tb(\gamma)+|\rot(\gamma)|=pq-p-q$. Is the inclusion \[\L^\gamma_{(p_0,v_0)}(K_{p,q})\hookrightarrow \K_{(p_0,v_0)}(K_{p,q})\] a homotopy equivalence?
\end{question}

We expect the answer of the previous question to be \dfn{positive}. We point out that when $\tb(\gamma)+|\rot(\gamma)|<pq-p-q$ so $\gamma$ has at least one positive and one negative stabilization \cite{EtnyreHonda:torusKnots} the answer to the previous question is \dfn{negative} \cite{F:BypassesUnknots}.

Next, we can consider a Legendrian connected sum operation. Given two long Legendrian embeddings $\gamma_1$ and $\gamma_2$ there are homomorphisms 
\[ 
  \#_k:\pi_k(\L_{(p_0,v_0)}(\NS^3,\xi_\std),\gamma_1)\oplus\pi_k(\L_{(p_0,v_0)}(\NS^3,\xi_\std),\gamma_2)\rightarrow \pi_k(\L,\gamma_1\#\gamma_2), 
\]
induced by the connected sum map for \dfn{long} Legendrians. The study of this map when $k=0$, i.e. for single Legendrians, was done by Etnyre and Honda in \cite{EtnyreHonda:ConnectedSum}. For $k>0$ is it easy to see that $\#_k$ is \dfn{not} surjective in general. Indeed, the parametric Fuchs-Tabachnikov theorem (see \cite{FuchsTabachnikov,Murphy:Loose}) implies that any formal class can be represented by a genuine Legendrian class after taking connected sum with finitely many double stabilizations (pairs of positive and negative stabilizations). It would be interesting to study the required number of double stabilizations to generate every formal class. The previous observation together with the results in \cite{FMP:unknot} about Legendrian unknots (or any of the classification results provided in this article) implies that $\#_k$ cannot be surjective in general. However, the following question stills open and is crucial in understanding Question \ref{q:flexibility}: 

\begin{question}\label{q:ConnectedSum}
  Let $k>0$. Is the map $\#_k$ injective?
\end{question}

An affirmative answer of the injectivity question would answer Question \ref{q:flexibility} negatively for every component of the space of Legendrians, again by an application of the Fuchs-Tabachnikov theorem. Question \ref{q:ConnectedSum} can be essentially reduced to the study of families of $1$-standard annuli, with boundary two $\tb=-1$ Legendrian unknots (the lateral boundary of a $3$ dimensional contact $1$-handle). In this article we completely describe the space of $n$-standard annuli for $n>1$ when the boundary maximizes the twisting number. For $1$-standard annuli we are able to describe this space just under some additional hypotheses. The general case $n=1$ is more subtle and requires some new ideas. 

Related to the Fuchs-Tabachnikov result is the surjectivity version of Question \ref{q:flexibility}:
\begin{question}\label{q:Surjectivity}
  Let $\gamma$ be a Legendrian embedding. Is every $k$-sphere, $k>0$, of formal Legendrian embeddings, based at $\gamma$, homotopic to a sphere of genuine Legendrian embeddings?
\end{question}

This is the natural generalization of the Bennequin--Eliashberg inequality for Legendrians \cite{Bennequin,Eliashberg:20Years} which gives a negative answer in the case $k=0$ when the ambient contact manifold is tight. Although we do not explain it in detail here, we point out that the answer of this question is negative when $\gamma$ is any of the maximal Thurston-Bennequin Legendrian embeddings treated in the introduction as a consequence of our work.

Also, Question \ref{q:flexibility} is widely open for Legendrian knots in overtwisted contact manifolds. There are some partial results in both flexible/rigid directions at the $\pi_0$-level, see \cite{Dymara:overtwisted,EMM:nonloose,Vogel:overtwisted}, and also partial results pointing towards flexibility for higher homotopy groups \cite{CardonaPresas}. We can also ask the same question for Legendrian knots in high dimensional contact manifolds. There are answers of both flavors after the work of Murphy \cite{Murphy:Loose}, about loose Legendrians; the work (in progress) of Eliashberg and Kragh \cite{EliashbergKragh:ExoticLegendrians} about exotic families of standard Legendrian unknots in the standard contact $(2n+1)$-sphere for $n>1$; and the work of Sabloff and Sullivan \cite{SabloffSullivan:LoopsGenerating} about non-contractible families of Legendrians using generating functions.

We finish with two motivating constructions that can be done with families of Legendrian embeddings.  Both can also be carried out in a higher dimensional setting. 

\begin{construction}
  Let $\gamma^p$, $p\in\NS^k$, be a $k$-sphere of Legendrian embeddings in $(\R^3,\xi_\std)$ with fronts $\gamma^p_F$. Then, there is an associated Legendrian $L(\gamma^p):\NS^k\times\NS^1\rightarrow (\R^{2k+3},\xi_\std)$, defined by its front $(\gamma^p_F, p)$ in $J^0(\R\times\NS^k)\subseteq J^0(\R^{k+1})$. See \cite{EkholmKalman} in which the case $k=1$ is treated. The contact push-off \cite{CasalsEtnyre:ContactDivisor} of $L(\gamma^p)$ gives rise to a contact divisor embedding $C(\gamma^p):(\NS^k\times\NS^1\times\NS^k,\xi_\std)\rightarrow (\R^{2k+3},\xi_\std)$. This construction could give rise to new examples of interesting codimension $2$ contact submanifolds. The works \cite{CasalsEtnyre:ContactDivisor,CoteFauteux,EkholmKalman} could be useful to study such an embedding. 
\end{construction}

\begin{construction}
  The following construction was explained to the authors by Gay and involves parametrized surgeries in the spirit of \cite{Gay:S4}. Let $\gamma^\theta$ be a loop of Legendrians in $(M,\xi)$. Denote by $(M_L,\xi_L)$ the contact manifold obtained by Legendrian surgery on $L=\image(\gamma^0).$ Associated with the loop $\gamma^\theta$ there is a bundle 
  \[ 
    (M_L,\xi_L)\hookrightarrow X^4\rightarrow \NS^1,
  \] 
  where the fiber over $\theta\in\NS^1$ is the contact manifold obtained by Legendrian surgery over $\image(\gamma^\theta)$. The monodromy of this bundle is a contactomorphism $\varphi_{\gamma^\theta}\in\Cont(M_L,\xi_L),$ that we call the \em loop surgery contactomorphism. \em It is interesting to study the possibility of producing non-trivial but formally trivial contactomorphisms in this way. We point out that the same construction with $0$-surgeries gives rise to the contact Dehn-twist introduced in \cite{FM:ContactDehn} to produce the first examples of infinite order formally trivial but non-trivial contactomorphisms on reducible tight contact $3$-manifolds. We expect this construction to give rise to examples of \dfn{irreducible} tight contact $3$-manifolds with infinite order formally trivial but non-trivial contactomorphisms. 
\end{construction}

\subsection{Notation and conventions}
Throughout this paper every manifold will be assumed to be smooth, oriented and compact; and every contact structure will be assumed to be positively cooriented and tight. Throughout the article the word ``fibration'' stands for ``Serre fibration'' and we will usually say ``homotopy equivalence'' to refer to a ``weak homotopy equivalence''. The latter is not important since every functional space appearing in this article is equipped with the $C^\infty$-topology and the Whitehead theorem applies \cite{Milnor,Palais}. Here, we can easily extend all notation for links, except for the space of long embeddings.

Let $(M,\xi)$ be a contact $3$-manifold.
\begin{itemize}
  \item $\Diff(M)$: the group of orientation preserving diffeomorphisms of $M$. To shorten the notation, if $M$ has boundary, we always assume that the elements of $\Diff(M)$ are the identity near the boundary, i.e. $\Diff(M) = \Diff_+(M,\bd M)$.

  \item $\Cont(M,\xi)$: the group of coorientation preserving contactomorphisms of $(M,\xi)$. To shorten the notation, if $M$ has boundary, we always assume that the elements of $\Cont(M,\xi)$ are the identity near the boundary, i.e. $\Cont(M,\xi) = \Cont(M,\bd M,\xi)$.

  \item $\mathcal{C}(M,\xi)$: the space of contact structures on $M$ that are isotopic to $\xi$ and coincide with $\xi$ on $\Op(\partial M)$.

  \item $\K (K,M)$: the space of embeddings realizing the smooth knot type $K$ in $M$. 

  \item $\K (K) = \K(K,\NS^3)$: in case $K$ is a knot type in $\NS^3$. 

  \item $\K_{(p_0,v_0)}(K,M)=\{\gamma\in\K(K,M):(\gamma(0),\gamma'(0))=(p_0,v_0)\}$: the space of long smooth embeddings realizing the knot type $K$.

  \item $\L^{\tbb}(K,(M,\xi))$: the space of Legendrian embeddings realizing the smooth knot type $K$ with the \dfn{maximal} Thurston--Bennequin invariant. 

  \item $\L^{\tbb}(K) := \L^{\tbb}(K,(\NS^3,\xi_\std))$: in case $K$ is a knot type in $\NS^3$.

  \item $\L^{\tbb}_{(p_0,v_0)}(K,(M,\xi)) = \L^{\tbb}(K,(M,\xi))\cap \K_{(p_0,v_0)}(K,M)$: the space of long Legendrian embeddings realizing the knot type $K$ with the maximal Thurston--Bennequin invariant.

  \item $\L^{\gamma}(K,(M,\xi))$: the space of Legendrian embeddings that realize the knot type K and are Legendrian isotopic to $\gamma$ where $\gamma \colon \NS^1 \to (M, \xi)$ is a Legendrian embedding.

  \item $C(K,M) := M \setminus \Op(K)$: the complement of an open neighborhood of $K$.  

  \item $C(K) := C(K,\NS^3)$: in case $K$ is a knot type in $\NS^3$.  
  
  \item $(C(K,M),\xi)$: the contact structure on $C(K,M)$ that is the restriction of $(M,\xi)$ to $M \setminus \Op(K)$. Here, we assume $K$ is a Legendrian knot and $\Op(K)$ is a standard neighborhood of $K$.  

  \item $\NS(\xi)$: the unit sphere bundle of $\xi$.
\end{itemize}

\subsection*{Outline}
In Section~\ref{sec:convex}, we first review basic properties of convex surfaces. Then we study the homotopy type of spaces of standard convex surfaces and prove Theorem~\ref{thm:GluingAnnuli}, the gluing theorem along standard convex annuli. In Section~\ref{sec:LegendrianEmbeddings}, we study various properties of Legendrian embedding spaces. In Section~\ref{sec:cables}, we study the max-$\tb$ Legendrian embedding spaces of (generalized) cables and prove Theorem~\ref{thm:SufficientlyPositiveLinkCables}, Theorem~\ref{thm:cable} and Corollary \ref{cor:IteratedTorus}. In Section~\ref{sec:lsfs}, we study basic properties of Legendrian Seifert fibered spaces and prove Theorem~\ref{thm:Main-LSB}. Finally, in Section~\ref{sec:lsflink} we use the results in Section~\ref{sec:lsfs} to study the max-$\tb$ Legendrian embedding spaces of positive Seifert fibered links and prove Theorem~\ref{thm:main-SFlinkS3}.

\subsection*{Acknowledgements}
The authors thank John Etnyre for a helpful discussion about Legendrian Seifert fibered spaces. We are also grateful to David Gay for explaining to us the construction of the loop surgery contactomorphism. Finally, we would like to acknowledge Peter Lambert-Cole for asking ``the right question at the right time'' when the first author was presenting a preliminary version of this work at the UGA Topology seminar just involving torus links.

\section{Convex surfaces}\label{sec:convex}

In this section, we will study useful properties of some ``standard'' convex surfaces with a certain characteristic foliation and their $I$-invariant neighborhoods that will be used in later sections. In particular, we will show that the space of these convex surfaces is homotopy equivalent to the space of smooth surfaces. In Section~\ref{subsec:convexBasic}, we review basics of convex surfaces (for more details, see \cite{Honda:classification1, Massot:convex}). In Section~\ref{subsec:microfibration}, we review the \dfn{microfibration trick} introduced in \cite{FMP:unknot}. In Section~\ref{subsec:handlebody}, we review the homotopy type of the contactomorphism group of standard contact handlebodies. Finally, in Section~\ref{subsec:stdSurfaces}, we review and prove properties of various standard convex surfaces; and we prove our gluing result (Theorem \ref{thm:GluingAnnuli}).

\subsection{Convex surface basics}\label{subsec:convexBasic}
Recall that a \dfn{contact vector field} in a contact 3-manifold $(M,\xi)$ is a vector field whose flow preserves the contact structure $\xi$. An embedded surface $\Sigma$ in $(M,\xi)$ is \dfn{convex} if there exists a contact vector field $X$ transverse to $\Sigma$. If $\Sigma$ is a surface with boundary, then we assume $\bd\Sigma$ to be Legendrian. According to \dfn{Giroux's flexibility theorem}, any embedded surface can be $C^\infty$-perturbed (rel boundary) to be convex.

Given a convex surface $\Sigma$, the \dfn{dividing set} $\Gamma_{\Sigma}$ is a set of points on $\Sigma$ where the contact vector field $X$ is tangent to $\xi$. Also, $\Gamma_{\Sigma}$ is an embedded multicurve on $\Sigma$. The dividing set divides $\Sigma$ into two regions: 
\[  
  \Sigma \setminus \Gamma_{\Sigma} = R_+ \cup R_-
\]
where $R_+ = \{p \in \Sigma : \alpha(X_p) > 0 \}$, $R_- = \{p \in \Sigma : \alpha(X_p) < 0 \}$ and $\alpha$ is a contact form of $\xi$. If $\Sigma$ has Legendrian boundary, then $\tb(\bd\Sigma) = -\frac12 |\Gamma_{\Sigma} \cap \bd\Sigma|$ and $\rot(\bd\Sigma) = \chi(R_+) - \chi(R_-)$.

Suppose $\Sigma$ is a convex surface transverse to a contact vector field $X$. Let $\phi_t$ be the flow of $X$. Since $\xi$ is invariant under translations in the $t \in \mathbb{R}$ direction, for a small $\epsilon > 0$ we call $\phi_{[-\epsilon,\epsilon]}(\Sigma)$ an \dfn{$I$-invariant neighborhood of $\Sigma$}. The contact structure on an $I$-invariant neighborhood of a convex surface is completely determined by its dividing set. In particular, we have the following tightness criterion.
  
\begin{theorem}[Giroux's criterion]\label{thm:criterion}
  Let $(M,\xi)$ be a contact $3$-manifold and $\Sigma$ be a compact convex surface (possibly with Legendrian boundary) in $(M,\xi)$. 
  \begin{itemize}
    \item Suppose $\Sigma = S^2$. An $I$-invariant neighborhood of $\Sigma$ is tight if and only if $|\Gamma_{\Sigma}|=1$. 
    \item Suppose $\Sigma \neq S^2$. An $I$-invariant neighborhood of $\Sigma$ is tight if and only if there are no contractible dividing curves on $\Sigma$.
  \end{itemize}
\end{theorem}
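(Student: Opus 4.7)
My plan is to split into two implications and to exploit Giroux's normal form: on an $I$-invariant neighborhood $\Sigma\times[-\epsilon,\epsilon]$ the contact structure is determined up to contact isotopy rel boundary by the pair $(\Sigma,\Gamma_\Sigma)$ via a form $\alpha=\beta+u\,dt$ on $\Sigma$ with $\Gamma_\Sigma=\{u=0\}$ and volume positivity $u\,d\beta+\beta\wedge du>0$. Consequently, for each admissible combinatorial type of $\Gamma_\Sigma$ it will suffice either to exhibit an overtwisted disk (in the forbidden cases) or to produce a tight model (in the admissible ones).

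\textbf{Overtwistedness from bad dividing sets.} Suppose either $\Sigma\neq S^2$ carries a contractible dividing curve $\gamma$ bounding a disk $D\subset\Sigma$, or $\Sigma=S^2$ with $|\Gamma_\Sigma|\geq 2$, in which case any innermost dividing component plays the role of $\gamma$. My plan is to produce a Legendrian unknot $L\subset\Sigma\times[-\epsilon,\epsilon]$ bounding a disk with vanishing Thurston--Bennequin invariant. Working in Giroux's local normal form near $\gamma$, the annular neighborhood is modelled by $\alpha=d\theta+y\,dt$ on $S^1_\theta\times(-\delta,\delta)_y\times(-\epsilon,\epsilon)_t$, and one can locate a Legendrian ruling curve $L$ on a nearby convex torus that is smoothly isotopic to $\gamma$. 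Since $L$ is isotopic to $\gamma$ inside $\Sigma\times[-\epsilon,\epsilon]$, it bounds a disk $D'$ obtained by pushing $D$ slightly in the $t$-direction; the twisting calculation in the local model gives $\tw(L,D')=0$, yielding $\tb(L)=0$ and violating the Bennequin inequality. The case $\Sigma=S^2$ with $|\Gamma_\Sigma|=0$ is excluded directly by the positivity condition, which forces $\{u=0\}$ to be nonempty.

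\textbf{Tight models for admissible dividing sets.} For $\Sigma=S^2$ with $|\Gamma_\Sigma|=1$, I would simply cite the standard convex round sphere in $(\R^3,\xi_\std)\subset(\NS^3,\xi_\std)$, whose $I$-invariant neighborhood sits in the standard contact sphere and is hence tight. For higher-complexity $\Sigma$ with $\Gamma_\Sigma$ containing no contractible component, the plan is to construct a tight thickening as a standard ribbon neighborhood of a Legendrian graph $G\subset(\NS^3,\xi_\std)$: decompose $\Sigma$ along $\Gamma_\Sigma$ into $R_\pm$ (each of which has strictly negative Euler characteristic on every component by the admissibility hypothesis), build $G$ from disjoint Legendrian arcs threading $R_+$ and $R_-$ in the pattern prescribed by the topology of $\Sigma$, and take a standard convex neighborhood. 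Tightness is inherited from the ambient $(\NS^3,\xi_\std)$, and the boundary dividing set computes to $\Gamma_\Sigma$ by the usual ribbon formula.

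\textbf{Main obstacle.} The delicate step is verifying that every non-trivial admissible multicurve $\Gamma_\Sigma\subset\Sigma$ is actually realized as the dividing set of such a ribbon neighborhood in $(\NS^3,\xi_\std)$. I would argue this by induction on $-\chi(\Sigma)$: cut $\Sigma$ along a properly embedded arc $c$ that separates components of $\Gamma_\Sigma$ and meets $\Gamma_\Sigma$ efficiently, apply the inductive hypothesis to the tight pieces, and reassemble using Colin's gluing theorem along standard convex disks. An alternative route would be to double $\Sigma\times[-\epsilon,\epsilon]$ across its horizontal boundary to obtain a closed contact manifold carrying an explicit Stein filling (via an open book whose page is $\Sigma$ and whose monodromy is the identity near $\Gamma_\Sigma$), automatically tight by Eliashberg's theorem.
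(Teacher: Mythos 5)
The paper does not prove this statement at all: Theorem~\ref{thm:criterion} is quoted as standard background (Giroux's criterion) with the reader referred to \cite{Honda:classification1, Massot:convex}, so there is no in-paper argument to compare yours against. Judged on its own, your sketch has the right architecture (exhibit a $\tb=0$ unknot when the dividing set is bad; exhibit a tight model when it is admissible), but both halves have genuine gaps. In the overtwisted direction, the tool you are implicitly using is the Legendrian realization principle applied to a push-off $\gamma'$ of the contractible dividing curve, and LeRP requires $\gamma'$ to be non-isolating. This fails precisely in the case $\Sigma\neq S^2$ with $\Gamma_\Sigma$ a single contractible curve: both the disk side and its complement must meet $\Gamma_\Sigma$, and the complement does not. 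Your appeal to ``a Legendrian ruling curve on a nearby convex torus'' does not resolve this; the isolating case needs a separate argument (folding to create auxiliary dividing curves, or a direct computation in the $\R$-invariant model), and omitting it leaves the hardest subcase unproved.

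In the tight direction the gap is more serious. The boundary of a standard neighborhood (ribbon) of a Legendrian graph $G$ is a convex surface whose positive and negative regions are both copies of the ribbon of $G$, so this construction only realizes pairs $(\Sigma,\Gamma_\Sigma)$ with $R_+\cong R_-$; admissible dividing sets with $R_+\not\cong R_-$ are not obtained this way. Your parenthetical that every component of $R_\pm$ has strictly negative Euler characteristic is also false (annular components occur already for a convex torus with two essential dividing curves); the hypothesis only rules out disk components. Moreover, it is not justified --- and should not be expected --- that every admissible $I$-invariant neighborhood embeds in $(\NS^3,\xi_\std)$, and the proposed fallback is incorrect as stated: the double of $\Sigma\times[-\epsilon,\epsilon]$ is $\Sigma\times \NS^1$ with an $\NS^1$-invariant structure, which is \emph{not} the open book with page $\Sigma$ and trivial monodromy (that open book yields a connected sum of copies of $\NS^1\times\NS^2$). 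The standard proofs instead either verify tightness directly in Giroux's normal form by passing to the universal cover $\widetilde{\Sigma}\times\R\cong\R^3$ and comparing with the standard tight $\R^3$, or invoke Giroux's classification of $\NS^1$-invariant contact structures on $\Sigma\times\NS^1$; some such global input is unavoidable and is missing from your plan.
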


A properly embedded graph $G$ on a convex surface is \dfn{non-isolating} if $G$ transversely intersects the dividing curves and each component of $\Sigma \setminus G$ intersects the dividing curves. Honda \cite{Honda:classification1} showed that any non-isolating graph can be $C^0$-perturbed into a Legendrian graph. 

\begin{theorem}[Legendrian realization principle, Honda \cite{Honda:classification1}]
  Suppose $G$ is a non-isolating properly embedded graph on a convex surface $\Sigma$ with the dividing set $\Gamma$. Then there exists a $C^0$-small isotopy of $\Sigma$ (rel boundary) such that $G$ is a part of the characteristic foliation. If $G$ is a simple closed curve, then the twisting number of $G$ with respect to the framing induced by $\Sigma$ is 
  \[
    \tw(G,\Sigma) = -\frac{|G \cap \Gamma|}{2}.
  \]
\end{theorem}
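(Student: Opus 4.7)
The plan is to reduce the statement to \emph{Giroux's flexibility theorem}, which asserts that on a convex surface $\Sigma$ any two singular foliations that are divided by the same multicurve $\Gamma$ can be realized as characteristic foliations of two surfaces related by a $C^0$-small isotopy (rel boundary) inside the ambient contact manifold. The key geometric input is that this isotopy can be carried out entirely within an $I$-invariant neighborhood of $\Sigma$, where the contact structure is encoded by $\Gamma$ alone. Therefore, the entire problem reduces to producing a singular foliation $\mathcal{F}$ on $\Sigma$ with $G$ as a union of leaves and singularities and such that $\Gamma$ divides $\mathcal{F}$.

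The construction of $\mathcal{F}$ proceeds as follows. By hypothesis $G\pitchfork \Gamma$, so the graph $G\cup \Gamma$ is an embedded multigraph on $\Sigma$ whose edges we will take as leaves of $\mathcal{F}$; the vertices of $G$ together with the intersection points $G\cap \Gamma$ become singularities. It remains to extend this local model to each complementary region $U$ of $G\cup \Gamma$. Here the non-isolating assumption is crucial: since each connected component of $\Sigma\setminus G$ contains points of $\Gamma$, every such $U$ is a surface with boundary having at least one arc on $\Gamma$. We can then place a finite number of sources or sinks in the interior of $U$ according as $U\subseteq R_+$ or $U\subseteq R_-$, connect them to the boundary by a Morse--Smale-type flow, and arrange that the flow crosses the arcs of $\partial U\cap \Gamma$ transversely (say, outward from $R_+$). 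The resulting $\mathcal{F}$ is divided by $\Gamma$ essentially by construction, and $G$ is a union of leaves together with singular points. Applying Giroux flexibility to the pair $(\mathcal{F},\Gamma)$ yields the desired $C^0$-small isotopy of $\Sigma$ after which $G$ lies in the characteristic foliation, hence is Legendrian.

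For the twisting number formula, suppose $G$ is a simple closed curve realized as a leaf of the new characteristic foliation $\xi\cap T\Sigma$. The twisting number $\tw(G,\Sigma)$ measures the rotation of $\xi|_G$ relative to the surface framing $T\Sigma|_G$. Along $G$ the contact plane agrees with $T\Sigma$ precisely when a leaf of $\xi\cap T\Sigma$ intersects $G$ transversely; this happens at the points $G\cap \Gamma$, and at each such point $\xi$ and $T\Sigma$ perform a half-twist whose sign is fixed by the convention orienting $R_\pm$. Summing these half-twists yields $\tw(G,\Sigma) = -\tfrac{1}{2}|G\cap \Gamma|$, which is the special case of the Thurston--Bennequin formula for $\partial$-parallel Legendrians applied to a tubular annulus around $G$ in $\Sigma$.

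The main subtlety, and therefore the step I expect to require the most care, is the construction of $\mathcal{F}$ on complementary regions that are topologically complicated (e.g.\ high genus or with several boundary components on $G$ and on $\Gamma$). One must make sure that the Euler-characteristic contributions of the sources/sinks are consistent with the sign of the region and that the flow extends smoothly across all boundary arcs; the non-isolating hypothesis is exactly what prevents this Poincaré--Hopf obstruction from appearing.
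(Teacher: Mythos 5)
First, note that the paper does not prove this statement at all: it is quoted as background from Honda's classification paper (itself building on work of Kanda and Giroux), so there is no internal proof to compare against. Your overall strategy --- build a singular foliation $\mathcal{F}$ on $\Sigma$ that contains $G$ in its leaves and singularities and is divided by $\Gamma$, then invoke Giroux flexibility inside an $I$-invariant neighborhood --- is exactly the standard route, and the way you locate the role of the non-isolating hypothesis (every complementary region must have a $\Gamma$-arc to ``drain'' through, since a region entirely inside $R_+$ with the flow tangent to its boundary would contradict $\int_U \operatorname{div}(w)\,\omega = \int_{\partial U}\iota_w\omega$) is correct.

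There are, however, two concrete problems with the construction of $\mathcal{F}$ as written. First, you declare the edges of $G\cup\Gamma$ to be leaves of $\mathcal{F}$ and the points of $G\cap\Gamma$ to be singularities. A singular foliation divided by $\Gamma$ is necessarily \emph{transverse} to $\Gamma$ and nonsingular along it (singularities of the characteristic foliation are where $\xi = T\Sigma$, which can only happen in $R_+\cup R_-$), so the arcs of $\Gamma$ cannot be leaves and the points of $G\cap\Gamma$ cannot be singular. You in fact contradict this yourself two sentences later when you require the flow to cross $\partial U\cap\Gamma$ transversely; the construction should take only $G$ into the leaves/singularities and treat $\Gamma$ as transversal exit curves throughout. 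Second, populating a region $U\subseteq R_+$ with ``sources or sinks'' is both too restrictive and partly of the wrong sign: in $R_+$ all singularities must be positive, but they may be either elliptic (index $+1$) or hyperbolic (index $-1$), and it is precisely the availability of positive \emph{hyperbolic} points that kills the Poincar\'e--Hopf obstruction for regions of arbitrary topology --- the non-isolating hypothesis does not do that job, it only guarantees an exit arc on $\Gamma$. As stated, your recipe fails already for a planar region with several boundary components or a region of positive genus. The twisting-number computation at the end reaches the right count, though the phrase ``the contact plane agrees with $T\Sigma$ precisely when a leaf intersects $G$ transversely'' is not what happens: $\xi$ agrees with $T\Sigma$ only at singularities, while at points of $G\cap\Gamma$ it is the plane containing $TG$ and the contact vector field direction, and the $-\pi$ rotation is accumulated between consecutive crossings of $\Gamma$.
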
 

A \dfn{bypass} is a half of an overtwisted disk constructed as follows. Consider a convex overtwisted disk whose dividing set consists of a single contractible closed curve. Take a properly embedded arc $\gamma$ on the disk intersecting the dividing curve in two points. By applying the Legendrian realization principle, we can assume that $\gamma$ is a Legendrian arc, and cut the disk along $\gamma$; each half-disk is called a bypass. By attaching a bypass, we can modify the dividing set on a convex surface. Suppose a bypass $D$ transversely intersects a convex surface $\Sigma$ such that $D \cap \Sigma = \gamma$. Let $\Gamma_\Sigma$ be the dividing set of $\Sigma$. Since the dividing curves interleave along the corner, $\gamma$ intersects $\Gamma_\Sigma$ in three points. We call the Legendrian arc $\gamma$ on $\Sigma$ the \dfn{attaching arc} of the bypass $D$ and say $D$ is a bypass for $\Sigma$. After edge-rounding, the convex boundary of a neighborhood of $D \cup \Sigma$ is a surface isotopic to $\Sigma$ but with its dividing set changed in a neighborhood of the attaching arc as shown in Figure~\ref{fig:bypass-attachment}. We call this process a \dfn{bypass attachment along $\gamma$}. Note that Figure~\ref{fig:bypass-attachment} is drawn for the case that the bypass $D$ is attached ``from the front'', that is, sitting above the page. If we attach a bypass ``from the back'' of $\Sigma$, the result will be the mirror image of Figure~\ref{fig:bypass-attachment}.

\begin{figure}[htbp]{\scriptsize
  \begin{overpic}[tics=20]{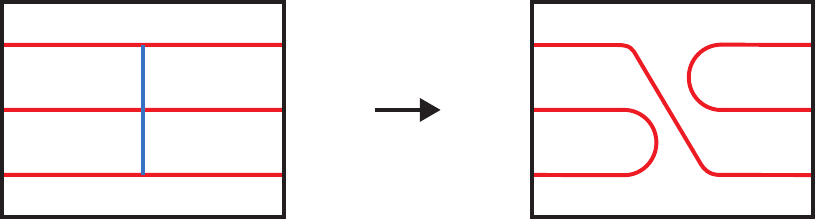}
  \end{overpic}}
  \vspace{0.2cm}
  \caption{Bypass attachment}
  \label{fig:bypass-attachment}
\end{figure}

Let $\Sigma$ be a convex surface and $D$ be a bypass for $\Sigma$. Suppose the attaching arc of $D$ passes three dividing curves $d_1$, $d_2$ and $d_3$ consecutively. We say the bypass $D$ is \dfn{effective} if $d_2$ is different from $d_1$ and $d_3$.  

Suppose $D$ is a non-effective bypass for a convex surface $\Sigma$ and let $\Sigma'$ be the resulting convex surface after attaching the bypass $D$ to $\Sigma$. Let $\Gamma_{\Sigma}$ be the dividing set on $\Sigma$. There are three types of non-effective bypasses for $\Sigma$ according to the effect on the dividing set:

\begin{enumerate}
  \item $\Gamma_{\Sigma'} = \Gamma_\Sigma$,
  \item $\Gamma_{\Sigma'}$ contains a contractible closed curve and $|\Gamma_{\Sigma'}| > |\Gamma_{\Sigma}|$,
  \item $\Gamma_{\Sigma'} \neq \Gamma_\Sigma$ and $|\Gamma_{\Sigma'}| \geq |\Gamma_{\Sigma}|$.
\end{enumerate} 

See Figure~\ref{fig:non-effective} for the first two cases. By Giroux's criterion (Theorem~\ref{thm:criterion}), the second type of bypasses does not occur in a tight contact manifold. The first type of bypasses does not change the dividing set, so we call it a \dfn{trivial bypass}. Honda \cite{Honda:bypass} showed that a trivial bypass is indeed trivial. 

\begin{lemma}[Honda \cite{Honda:bypass}]\label{lem:trivial}
  Suppose $\Sigma$ is a convex surface which is closed or compact with Legendrian boundary. If $D$ is a trivial bypass for $\Sigma$, then a neighborhood $N(\Sigma \cup D)$, which is a result of the bypass attachment, is an $I$-invariant neighborhood of $\Sigma$. 
\end{lemma}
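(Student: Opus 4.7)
Write $\Sigma'$ for the convex surface that results from attaching the bypass $D$ to $\Sigma$. Because $D$ is trivial, $\Gamma_{\Sigma'}$ is isotopic to $\Gamma_{\Sigma}$. The neighborhood $N(\Sigma \cup D)$ is smoothly a copy of $\Sigma \times [0,1]$ with $\Sigma$ and $\Sigma'$ as its two convex boundary components, both carrying the same dividing set $\Gamma$. The task is therefore to identify the tight contact structure on this thickened $\Sigma$ with the $I$-invariant one.

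My strategy is to exhibit, inside an honest $I$-invariant neighborhood $U = \Sigma \times [-\epsilon, \epsilon]$ of $\Sigma$, a \emph{model} trivial bypass $D_0$ with the same attaching configuration as $D$, and then transfer the conclusion via a relative uniqueness argument. Let $\gamma$ be the Legendrian attaching arc of $D$, meeting $\Gamma_{\Sigma}$ at three points $p_1, p_2, p_3$ along dividing curves $d_1, d_2, d_3$ with, say, $d_1 = d_2$. The sub-arc $\gamma_{12}$ of $\gamma$ between $p_1$ and $p_2$, together with a small segment of $d_1$, cobounds an embedded digon $\delta \subset \Sigma$. Pushing $\delta$ slightly into $U$ and applying the Legendrian realization principle yields a convex half-disk $D_0 \subset U$ whose Legendrian boundary contains $\gamma$ and whose attaching data matches that of $D$. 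By inspection $D_0$ is a trivial bypass, and a neighborhood $N(\Sigma \cup D_0) \subset U$ is an $I$-invariant neighborhood of $\Sigma$.

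Next, I would compare $N(\Sigma \cup D)$ with $N(\Sigma \cup D_0)$. The two complexes $\Sigma \cup D$ and $\Sigma \cup D_0$ agree along $\Sigma$ and along the Legendrian arc $\gamma$, with the same transverse framing and the same orientation of the half-disks. After a small $C^0$ perturbation that rounds the corner along $\gamma$, each complex is realized by a smooth convex surface whose characteristic foliation is determined, up to an isotopy supported near the complex, by its dividing set. Applying Giroux's realization theorem together with a relative Moser-type argument then produces a contactomorphism $N(\Sigma \cup D) \cong N(\Sigma \cup D_0)$ fixing $\Sigma$ pointwise; combined with the previous paragraph this completes the proof.

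The main obstacle is the relative uniqueness statement used above: one has to justify that any two contact germs around the $2$-complex $\Sigma \cup D$ with matching Giroux data are contactomorphic rel $\Sigma$, and the corner along $\gamma$ means that the standard convex surface Moser theorem does not apply directly. An alternative route that sidesteps this issue is to extend $D$ within $N(\Sigma \cup D)$ to a convex disk $\widetilde{D}$ by attaching a thin strip along $\Sigma$; the condition $d_1 = d_2$ forces the dividing set on $\widetilde{D}$ to consist only of boundary-parallel arcs, so Giroux's criterion (Theorem~\ref{thm:criterion}) provides a unique tight standard neighborhood for $\widetilde{D}$, which is then readily identified with an $I$-invariant neighborhood of $\Sigma$.
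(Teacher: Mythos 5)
This lemma is not proved in the paper at all: it is quoted verbatim from Honda's work on bypasses, so there is no in-paper argument to compare against. Judged on its own merits, your high-level strategy --- produce a model trivial bypass $D_0$ inside a genuine $I$-invariant neighborhood along the same attaching arc, then invoke a uniqueness statement for the contact germ of the bypass layer --- is in fact the shape of Honda's argument. But both pillars of your proposal have genuine gaps. First, the construction of $D_0$ does not work as described: pushing the digon $\delta\subset\Sigma$ slightly into $U$ produces a disk roughly parallel to $\Sigma$, whereas a bypass half-disk must be transverse to $\Sigma$ along $\gamma$, with $\gamma$ as half of its Legendrian boundary and prescribed elliptic/hyperbolic singularities. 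The existence of a trivial bypass along a \emph{prescribed} attaching arc inside an $I$-invariant neighborhood is itself a nontrivial theorem of Honda (the ``right-to-life'' phenomenon), not something obtained by the Legendrian realization principle applied to a pushed-in digon. Second, the relative uniqueness you need --- that any two contact germs on a neighborhood of $\Sigma\cup D$ with matching Giroux data are contactomorphic rel $\Sigma$ --- is exactly the hard content of the lemma; you correctly identify it as the main obstacle but do not supply an argument, so the proof is circular at this point.

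The fallback route in your last paragraph does not rescue the argument. Giroux's criterion (Theorem~\ref{thm:criterion} in the paper) is a tightness test for $I$-invariant neighborhoods; it says nothing about uniqueness of tight contact structures on a neighborhood of the disk $\widetilde{D}$. Moreover, a tubular neighborhood of $\widetilde{D}$ is a $3$-ball, which cannot be ``readily identified'' with an $I$-invariant neighborhood of $\Sigma$ when $\Sigma$ is closed or has positive genus; the identification would have to be made for the whole layer $N(\Sigma\cup D)\cong\Sigma\times[0,1]$, and that is again the original problem. To make the proof genuinely work you would need either Honda's classification-based analysis of the tight layer between $\Sigma$ and $\Sigma'$, or an honest proof of the existence of trivial bypasses plus the well-definedness of the bypass layer as a function of the attaching arc; as written, neither is established.
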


\begin{figure}[htbp]
  \vspace{0.2cm}
  \begin{overpic}[tics=20]{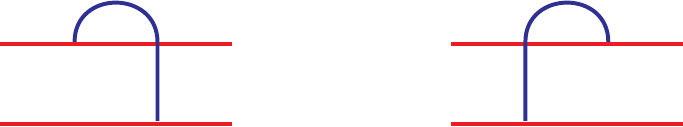}
  \end{overpic}
  \vspace{0.1cm}
  \caption{Two attaching arcs of non-effective bypasses.}
  \label{fig:non-effective}
\end{figure}

Suppose $\Sigma$ is a compact surface and $\xi$ is a contact structure on $\Sigma \times [0,1]$. Giroux \cite{Giroux:parametric}  showed that $\xi$ can be perturbed so that $\Sigma \times \{t\}$ are convex for all but finite $t \in [0,1]$. After that, Honda and Huang \cite{HH:parametric} extended the result to high dimensions. Colin \cite{Colin:discretization}  improved this result for a one-parameter family of embedded surfaces.

\begin{theorem}[Isotopy discretization, Colin \cite{Colin:discretization}, see also Honda \cite{Honda:bypass}]\label{thm:discretization}
  Let $(M,\xi)$ be a contact $3$--manifold and $\Sigma$ be a compact surface (possibly with boundary). Suppose $\phi_t\colon\Sigma \to M$ for $t\in[0,1]$ is a smooth isotopy of $\Sigma$ (rel boundary) and $\phi_0(\Sigma)$ and $\phi_1(\Sigma)$ are convex (possibly with Legendrian boundary). Then after a $C^{\infty}$-small perturbation of $\phi_t$ (rel boundary) while keeping $\phi_0$ and $\phi_1$ fixed, there exists a finite sequence $0 = t_0 < t_1 < \cdots < t_n = 1$ such that 
  \begin{itemize}
    \item $\phi_{t_i}(\Sigma)$ is convex for $i = 0, \ldots, n$.
    \item $\phi_{[t_i,t_{i+1}]}(\Sigma)$ is contactomorphic to a bypass attachment.
  \end{itemize}   
\end{theorem}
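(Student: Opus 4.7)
The plan is to combine Giroux's parametric convexity theorem with a local classification of generic bifurcations, following the discretization strategy of Colin. First, I would apply a relative parametric version of Giroux's convexity theorem to the isotopy $\phi_t$: since $\phi_0(\Sigma)$ and $\phi_1(\Sigma)$ are already convex by hypothesis, a standard argument produces a $C^\infty$-small perturbation of $\phi_t$ (rel boundary and keeping $\phi_0,\phi_1$ fixed) such that $\phi_t(\Sigma)$ is convex for all $t\in[0,1]$ except at finitely many times $\tau_1<\cdots<\tau_k\in(0,1)$. Inserting sample times $t_i$ strictly between consecutive $\tau_j$'s (with $t_0=0$ and $t_n=1$) then yields a subdivision $0=t_0<t_1<\cdots<t_n=1$ with each $\phi_{t_i}(\Sigma)$ convex and with each open slab $\phi_{(t_i,t_{i+1})}(\Sigma)$ containing exactly one bifurcation time.

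The heart of the proof is then to show that each such slab is contactomorphic to a bypass attachment. At a generic bifurcation time $\tau_j$, the characteristic foliation of the family $\phi_t(\Sigma)$ undergoes one of the standard Morse-type bifurcations: either a birth/death of a pair of singularities of opposite index, or a saddle connection (retrograde tangency). Via Giroux's correspondence between characteristic foliations and dividing sets, the birth/death case either leaves the dividing set unchanged or creates/cancels a small contractible dividing loop; by Lemma~\ref{lem:trivial}, both scenarios are realized by trivial bypass attachments. The saddle connection case corresponds precisely to a non-trivial bypass attachment, modifying the dividing set as in Figure~\ref{fig:bypass-attachment}. Combining this local picture with the $C^\infty$-smallness of each slab, one identifies the entire slab with the corresponding model bypass attachment by an ambient contact isotopy supported near the bifurcation locus.

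The main obstacle is the second step: making precise the dictionary between one-parameter bifurcations of characteristic foliations and bypass moves on dividing sets. This requires a careful local analysis in a neighborhood of each bifurcation point and a transversality argument ensuring that no more exotic combined bifurcations occur in the slab, which is ruled out by shrinking the perturbation further. A further subtlety is the Legendrian boundary condition: along $\bd\Sigma$ one must ensure that the boundary remains Legendrian throughout and that bifurcations occur in the interior of $\Sigma$, which is guaranteed by the relative version of the parametric convexity theorem together with the fact that the Legendrian condition is an open parametric condition near a convex surface. Once these ingredients are in place, the local bypass classification assembles into the desired discretization.
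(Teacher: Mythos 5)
The paper does not prove Theorem~\ref{thm:discretization}; it is imported as a black box from Colin \cite{Colin:discretization} and Honda \cite{Honda:bypass}, so there is no in-paper argument to measure yours against. Judged on its own, your outline does follow the actual route of those references: parametric genericity to reduce to finitely many bifurcation times, sample times chosen so that each slab contains exactly one bifurcation, and an identification of each slab with a bypass layer. Two points, however, deserve correction.

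First, your taxonomy of bifurcations is off. In Giroux's analysis of a generic one-parameter family, a birth/death of a pair of singular points of the characteristic foliation neither destroys convexity nor changes the dividing set up to isotopy, so these events need not be counted among the bifurcation times at all; the same is true of saddle connections between hyperbolic points of the same sign. The only events at which convexity genuinely fails and the dividing set jumps are the \emph{retrograde} saddle--saddle connections (a connection from a negative to a positive hyperbolic point), and Honda's contribution is precisely the identification of such an event with a single bypass attachment. Your suggestion that a birth/death could ``create/cancel a small contractible dividing loop'' handled by Lemma~\ref{lem:trivial} conflates two different things: Lemma~\ref{lem:trivial} concerns trivial bypasses, which by definition leave the dividing set unchanged, whereas the bypass that creates a contractible dividing curve is the second, non-trivial type of non-effective bypass in the paper's own taxonomy --- and in the tight setting assumed throughout the paper such a curve cannot appear on a convex surface other than a sphere by Theorem~\ref{thm:criterion}. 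Second, and more substantively, the sentence ``one identifies the entire slab with the corresponding model bypass attachment by an ambient contact isotopy supported near the bifurcation locus'' is where the entire content of the theorem lives, and you assert it rather than prove it: one must show that the germ of contact structure on $\Sigma\times[t_i,t_{i+1}]$ containing exactly one retrograde connection is determined, up to contactomorphism, by the boundary dividing sets together with the attaching arc of the bypass. That is the technical heart of Honda's argument and is not a routine transversality statement. For a cited background theorem this level of sketch is acceptable, but the dictionary should be stated correctly and the key identification acknowledged as the nontrivial step.
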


\subsection{The microfibration trick}\label{subsec:microfibration}
In this subsection, we review the \dfn{microfibration trick} introduced in \cite{FMP:unknot}, which is a key tool for studying the homotopy type of a space of convex surfaces. Suppose $M$ and $N$ are compact manifolds and $M$ is equipped with a metric $d$. Let $\Emb(N,M)$ be the space of smooth embeddings of $N$ into $M$. If $\bd N\neq \emptyset$, then we assume all the embeddings are fixed at $\Op(\partial N)$. For a given $\varepsilon>0$ and an embedding $e\colon N\rightarrow M$, denote by 
\[
  \mathcal{U}_{\varepsilon}(e)=\{f\in\Emb(N,M) : d(f(p),e(N))<\varepsilon \;\, \forall p\in N\}.
\]

\begin{theorem}[The microfibration trick, Fern\'andez--Mart\'inez-Aguinaga--Presas \cite{FMP:unknot}]\label{thm:microfibration}
Let $E_0\subseteq E_1\subseteq \Emb(N,M)$ be two subsets satisfying 

\begin{itemize}
  \item \textbf{Density Property:} The inclusion $j\colon E_0\hookrightarrow E_1$ is $C^0$-dense. 
  \item \textbf{Local Equivalence Property:} For every embedding $e\in E_1$, there exists a positive number $\varepsilon(e)>0$ such that for every $0<\varepsilon<\varepsilon(e)$ the inclusion 
  \[ 
    E_0\cap \mathcal{U}_{\varepsilon}(e)\hookrightarrow E_1\cap \mathcal{U}_{\varepsilon}(e)
  \] 
  is a weak homotopy equivalence.
\end{itemize}

Then $j$ is also a weak homotopy equivalence.
\end{theorem}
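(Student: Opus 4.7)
The goal is to show that $j$ induces isomorphisms on all relative homotopy groups, equivalently that $\pi_k(E_1, E_0, e_0) = 0$ for every $k \geq 1$ and every base point $e_0 \in E_0$ (surjectivity of $\pi_0$ follows directly from the Density Property). The plan is to represent a class by a continuous map $f \colon (D^k, \partial D^k) \to (E_1, E_0)$ and to homotope $f$ rel $\partial D^k$ into a map taking values in $E_0$, via a local-to-global subdivision argument over a sufficiently fine triangulation of $D^k$.

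The first step is to extract a useful cover from the Local Equivalence Property. For every $x \in D^k$ set $e_x := f(x) \in E_1$ and let $\varepsilon(e_x) > 0$ be the constant from the hypothesis. By continuity of $f$, the preimages $V_x := f^{-1}(\mathcal{U}_{\varepsilon(e_x)/2}(e_x))$ cover $D^k$, so by compactness I extract a finite subcover $V_{x_1}, \ldots, V_{x_m}$. Abbreviate $\mathcal{U}_i := \mathcal{U}_{\varepsilon(e_{x_i})/2}(e_{x_i})$ and $\widehat{\mathcal{U}}_i := \mathcal{U}_{\varepsilon(e_{x_i})}(e_{x_i})$, and fix a smooth triangulation $T$ of $D^k$ refining this cover in the sense that every simplex $\sigma$ of $T$ admits an index $i(\sigma)$ with $f(\sigma) \subseteq \mathcal{U}_{i(\sigma)}$. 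The factor of two between $\mathcal{U}_i$ and $\widehat{\mathcal{U}}_i$ is the slack that will absorb the small perturbations produced below.

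The second step is an induction on the skeletal dimension $j = 0, 1, \ldots, k$, producing homotopies $f = f_0 \rightsquigarrow f_j$ rel $\partial D^k$ such that $f_j$ sends the $j$-skeleton of $T$ into $E_0$, while $f_t(\sigma) \subseteq \widehat{\mathcal{U}}_{i(\sigma)}$ for every simplex $\sigma$ and every $t$. The base case $j = 0$ uses the Density Property to move each interior vertex into $E_0$ along a $C^0$-short path inside its $\widehat{\mathcal{U}}_i$; vertices already in $\partial D^k$ need no modification. For the inductive step from $j-1$ to $j$, on each interior $j$-simplex $\sigma$ the boundary image $f_{j-1}(\partial \sigma)$ lies in $E_0 \cap \widehat{\mathcal{U}}_{i(\sigma)}$ while $f_{j-1}(\sigma) \subseteq E_1 \cap \widehat{\mathcal{U}}_{i(\sigma)}$; the Local Equivalence Property yields $\pi_j(E_1 \cap \widehat{\mathcal{U}}_{i(\sigma)}, E_0 \cap \widehat{\mathcal{U}}_{i(\sigma)}) = 0$, producing a homotopy of $f_{j-1}|_\sigma$ rel $\partial \sigma$ into $E_0 \cap \widehat{\mathcal{U}}_{i(\sigma)}$. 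These simplexwise homotopies glue across matching boundaries precisely because each is performed rel $\partial \sigma$, yielding the global homotopy $f_{j-1} \rightsquigarrow f_j$.

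The step I expect to be the main obstacle is the bookkeeping that keeps every inductive modification inside the neighborhood where the Local Equivalence Property can be invoked. The difficulty is that when the $j$-simplex $\sigma$ is treated, the already-modified boundary $\partial \sigma$ has drifted from its initial position under homotopies coming from possibly different indices $i(\sigma')$ with $\sigma' \subseteq \partial\sigma$, and must still be contained in $\widehat{\mathcal{U}}_{i(\sigma)}$. This is exactly what the half-radius containment $f(\sigma) \subseteq \mathcal{U}_{i(\sigma)}$, together with a sufficiently fine triangulation $T$ (chosen so that previous simplexwise homotopies are uniformly small in the $C^0$-sense), buys. After the induction reaches $j = k$, the map $f_k$ agrees with $f$ on $\partial D^k$, takes values in $E_0$, and is homotopic to $f$ rel $\partial D^k$, so $[f] = 0$ in $\pi_k(E_1, E_0)$ and $j$ is a weak homotopy equivalence.
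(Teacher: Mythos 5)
The paper does not actually prove Theorem~\ref{thm:microfibration}: it is imported from \cite{FMP:unknot} (only the statement, with a remark that the original conclusion is stronger), so there is no in-paper proof to compare against. Judged on its own terms, your skeletal-induction strategy is the natural local-to-global argument and is surely in the spirit of the original; the problem is that the step you yourself flag as the main obstacle is exactly where the argument, as written, fails.

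The gap is the claim that $T$ can be chosen ``so that previous simplexwise homotopies are uniformly small in the $C^0$-sense.'' The homotopy supplied by the Local Equivalence Property on a face $\tau$ is only constrained to stay inside $E_1\cap\widehat{\mathcal{U}}_{i(\tau)}$; nothing in the hypothesis makes it $C^0$-small, and refining the triangulation does not shrink it, because the radius $\varepsilon(e_{x_{i(\tau)}})$ of $\widehat{\mathcal{U}}_{i(\tau)}$ is fixed by the initial finite subcover, independently of the mesh of $T$. Consequently, at the next stage on a coface $\sigma\supset\tau$ you only know $f_{j-1}(\tau)\subseteq\widehat{\mathcal{U}}_{i(\tau)}$, and there is no reason to have $\widehat{\mathcal{U}}_{i(\tau)}\subseteq\widehat{\mathcal{U}}_{i(\sigma)}$: the centers $e_{x_{i(\tau)}}$ and $e_{x_{i(\sigma)}}$ are unrelated elements of the subcover, known only to both contain $f(\tau)$ in half-radius neighborhoods, the sets $\mathcal{U}_\varepsilon(e)$ are asymmetric (they measure the one-sided distance of an image to $e(N)$), and $\varepsilon(e_{x_{i(\sigma)}})$ may be far smaller than $\varepsilon(e_{x_{i(\tau)}})$. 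So the Local Equivalence Property cannot be invoked on $\sigma$ and the induction does not close. The repair is nontrivial: one must use the full strength of the hypothesis (it holds for \emph{every} $0<\varepsilon<\varepsilon(e)$) together with the containment $\mathcal{U}_a(e')\subseteq\mathcal{U}_{a+b}(e)$ valid for $e'\in\mathcal{U}_b(e)$, assigning to each simplex a center taken from the restriction of $f$ to that simplex and a radius that \emph{increases} with skeletal dimension by at least the oscillation of $f$ over a simplex, so that the neighborhood used for $\tau$ is nested inside the one used for each coface; keeping all these radii below the relevant $\varepsilon(\cdot)$ while breaking the resulting circularity between mesh size and radius scale is the real content of the proof. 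A secondary, fixable slip: at the vertex stage you need a \emph{path} in $E_1\cap\widehat{\mathcal{U}}_i$ from $f(v)$ to a point of $E_0$, which is the $\pi_0$-surjectivity contained in the Local Equivalence Property; $C^0$-density alone only produces a nearby point of $E_0$, not a path to it.
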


\begin{remark}
  The conclusion in the original statement in \cite{FMP:unknot} is stronger and provides a $C^0$-closedness property. However, we will not make use of it. 
\end{remark}

\subsection{Standard contact handlebodies}\label{subsec:handlebody} 
Let $(H_g,\xi)$ be a genus $g$ tight contact handlebody with convex boundary. We say $(H_g,\xi)$ is a \dfn{standard contact handlebody} if $g=0$ and it is a Darboux ball, or $g>0$ and it is a standard neighborhood of a Legendrian graph. This definition is equivalent to the requirement that $(H_g,\xi)$ is tight and every compressing disk in $(H_g,\xi)$ can be perturbed so that it intersects the dividing curves of $\partial H_g$ at exactly two points. The following result was proved in \cite{FMP:unknot} building on the main result of Eliashberg-Mishachev \cite{EliashbergMishachev:tightS3} which covers the case of the $3$-disk.

\begin{theorem}[Fern\'andez--Mart\'inez-Aguinga--Presas \cite{FMP:unknot}]\label{thm:StandardTightHandlebody}
  Let $(H_g,\xi)$ be a standard contact handlebody. Then the space $\mathcal{C}(H_g,\xi)$ is contractible and the inclusion 
  \[
    \Cont(H_g,\xi) \hookrightarrow \Diff(H_g)
  \]
  is a homotopy equivalence. 
\end{theorem}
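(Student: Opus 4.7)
The plan is to deduce the homotopy equivalence of contactomorphism groups from the contractibility of the space of contact structures, and to reduce the contractibility statement, via cutting along a fixed complete system of compressing disks, to the genus zero (Darboux ball) case, which is already known from Eliashberg--Mishachev.

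For the second assertion I would exhibit the evaluation fibration
\[
  \Cont(H_g,\xi) \hookrightarrow \Diff(H_g) \xrightarrow{\;\Phi\, \mapsto\, \Phi^*\xi\;} \mathcal{C}(H_g,\xi).
\]
Transitivity of the action is Moser/Gray stability combined with the isotopy extension theorem: every $\eta \in \mathcal{C}(H_g,\xi)$ is, by definition, joined to $\xi$ through contact structures that agree with $\xi$ near $\bd H_g$, so Gray produces an ambient isotopy fixed near $\bd H_g$; the stabilizer is visibly $\Cont(H_g,\xi)$. Once $\mathcal{C}(H_g,\xi)$ is known to be contractible, the long exact sequence forces the inclusion of the fiber to be a weak equivalence, which upgrades to a $C^\infty$-homotopy equivalence by Palais.

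The substantive statement is thus the contractibility of $\mathcal{C}(H_g,\xi)$. The case $g=0$ is the Eliashberg--Mishachev theorem. For $g>0$ I would fix, once and for all, a complete system of compressing disks $\mathcal{D} = D_1 \sqcup \cdots \sqcup D_g$ for $H_g$ such that each $\bd D_i$ is Legendrian and each $D_i$ is convex for $\xi$ with dividing set a single $\bd$-parallel arc; this is possible precisely because $(H_g,\xi)$ is a standard contact handlebody. Let $E_1 = \mathcal{C}(H_g,\xi)$ and let $E_0 \subseteq E_1$ be the subspace of $\eta$ for which the fixed system $\mathcal{D}$ remains convex with a single dividing arc on each $D_i$. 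The first step is to show via the microfibration trick (Theorem~\ref{thm:microfibration}) that $E_0 \hookrightarrow E_1$ is a weak equivalence. Density uses Giroux's parametric convex flexibility to convexify $\mathcal{D}$ after a $C^\infty$-small isotopy rel boundary, combined with Giroux's criterion (Theorem~\ref{thm:criterion}) to exclude contractible dividing curves and Lemma~\ref{lem:trivial} to absorb any additional $\bd$-parallel arcs as trivial bypasses; that no other dividing configurations can appear is forced by the assumption that $(H_g,\xi)$ is standard. Local equivalence follows because the convexifying perturbation can be chosen continuously in $C^0$-small families, again by the parametric convex theorem.

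Finally, I would invoke the disk case of the parametric gluing result (the disk version of Theorem~\ref{thm:GluingAnnuli}, due to \cite{Colin:gluing} non-parametrically and to \cite{FMP:unknot} parametrically) to show that cutting along $\mathcal{D}$ and then gluing back induces a homotopy equivalence between $E_0$ and the space $\mathcal{C}(B,\xi_B)$ of contact structures on the tight $3$-ball $(B,\xi_B)$ obtained by cutting; tightness of $\xi_B$ is automatic from Giroux's criterion given the standard dividing pattern. Eliashberg--Mishachev then gives contractibility of the target, hence of $E_0$, hence of $E_1=\mathcal{C}(H_g,\xi)$. The main technical obstacle is the parametric bypass absorption needed to enforce the single-dividing-arc condition on each $D_i$ continuously in a family: one needs to promote Honda's pointwise triviality of a trivial bypass to a family statement that is compatible both with the microfibration neighborhoods and with the cut-and-glue correspondence; everything else is an assembly of the microfibration trick, parametric gluing, and the Eliashberg--Mishachev theorem.
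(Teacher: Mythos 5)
The paper does not prove this statement: it is imported verbatim from \cite{FMP:unknot}, with the only indication of method being the remark that it is ``built on the main result of Eliashberg--Mishachev \cite{EliashbergMishachev:tightS3} which covers the case of the $3$-disk.'' So there is no in-paper proof to compare against line by line. That said, your outline is consistent with the strategy the paper attributes to the cited source, and with how the paper itself runs the analogous arguments for annuli: Gray fibration $\Cont(H_g,\xi)\hookrightarrow\Diff(H_g)\to\mathcal{C}(H_g,\xi)$ to reduce the second assertion to the first, then a convex decomposition along a complete system of compressing disks meeting the dividing set of $\bd H_g$ in two points (which is exactly the paper's characterization of ``standard''), the microfibration trick, and a parametric gluing along $\tb=-1$ convex disks (Colin \cite{Colin:gluing} non-parametrically, \cite{FMP:unknot} parametrically) to land on the Eliashberg--Mishachev ball.

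One structural caveat: Theorem~\ref{thm:microfibration} as stated applies to pairs of subspaces of an \emph{embedding} space, not to subspaces of $\mathcal{C}(H_g,\xi)$. You apply it directly to the locus of contact structures for which a fixed disk system is standard; the paper's template (compare Theorem~\ref{thm:StandardAnnuli} feeding into Theorem~\ref{thm:GluingAnnuli}) instead fixes the contact structure, proves that standard disk embeddings are homotopy equivalent to smooth disk embeddings, and then transports that statement to $\mathcal{C}$ via the isotopy extension theorem and Gray stability. Your dual formulation is morally equivalent but would need either a restatement of the microfibration lemma or the same detour through embedding spaces. You are also right to flag the parametric absorption of trivial bypasses (promoting Lemma~\ref{lem:trivial} to families) as the genuine technical content; that is indeed where the work in \cite{FMP:unknot} lies, and your sketch does not supply it, so the proposal should be read as a correct reduction scheme rather than a complete proof.
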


\begin{remark}
  It follows from the work of Hatcher \cite{Hatcher:fibering,Hatcher:Smale} that the group $\Diff(H_g)$ is contractible. Therefore $\Cont(H_g,\xi)$ is also contractible.
\end{remark}

\subsection{Standard convex surfaces}\label{subsec:stdSurfaces}
In this subsection, we introduce some standard convex surfaces, which are characterized by specific dividing sets and characteristic foliations. Then we apply the microfibration trick (Theorem~\ref{thm:microfibration}) to study the spaces of these surfaces.

\subsubsection{Standard spheres} A \dfn{standard convex sphere} in a contact $3$-manifold $(M,\xi)$ is an embedded $2$-sphere $\NS^2\subseteq (M,\xi)$ with the same characteristic foliation as the boundary of a Darboux ball $(\D^3,\xi_\std)$. Notice that the dividing set consists of a single closed curve. A \dfn{standard embedding} of a sphere is any embedding that parametrizes a standard convex sphere. We denote by $\Emb_\std(\NS^2,(M,\xi))$ the space of standard embeddings of a sphere into $(M,\xi)$ and by $\Emb(\NS^2,M)$ the space of smooth embeddings of a sphere into $M$. The following is a direct application of Theorem \ref{thm:microfibration}, proved in \cite{FMP:unknot}.

\begin{theorem}[Fern\'andez--Mart\'inez-Aguinaga--Presas \cite{FMP:unknot}]\label{thm:StandardSpheres}
  The homotopy fiber of the inclusion $$ \Emb_\std(\NS^2,(M,\xi))\hookrightarrow \Emb(\NS^2,M)$$ is homotopy equivalent to $\Omega\NS^2$.
\end{theorem}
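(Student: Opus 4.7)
The plan is to apply the microfibration trick (Theorem~\ref{thm:microfibration}) to a marked enhancement of the inclusion $\Emb_\std \hookrightarrow \Emb(\NS^2, M)$ and read off the homotopy fiber. The key observation is that every standard convex sphere has a canonical positive elliptic singularity in its characteristic foliation, which determines a distinguished point in $\NS^2$ upon pulling back through the parametrization. For $e \in \Emb_\std$ write $\Psi(e) \in \NS^2$ for the preimage under $e$ of that singularity; this depends continuously on $e$, and the graph
\[
\Gamma \colon \Emb_\std(\NS^2, (M,\xi)) \longrightarrow \Emb(\NS^2, M) \times \NS^2, \qquad e \longmapsto (e, \Psi(e)),
\]
is a homeomorphism onto the subset
\[
S = \{(e, p) \in \Emb(\NS^2, M) \times \NS^2 : e \in \Emb_\std \text{ and } e(p) \text{ is the positive elliptic}\}.
\]

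The heart of the argument will be to show that $S \hookrightarrow \Emb(\NS^2, M) \times \NS^2$ is a weak homotopy equivalence. I plan to apply Theorem~\ref{thm:microfibration} to this pair; the formalism of the microfibration trick is insensitive to having a product $\Emb \times \NS^2$ in place of an embedding space, as everything is carried out in the $C^0$-topology. For the density property, given $(e_0, p_0) \in \Emb \times \NS^2$ I will produce a $C^0$-close $(e, p_0) \in S$ in two steps: first, perturb $e_0$ so that $T_{e_0(p_0)} e_0(\NS^2) = \xi_{e_0(p_0)}$ (a local transversality condition that is always achievable by a $C^0$-small wiggle); second, invoke Giroux's flexibility theorem together with a local contact normal form to arrange that $e(\NS^2)$ is a standard convex sphere with positive elliptic singularity exactly at $e(p_0)$. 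For the local equivalence property, deformation arguments based on the uniqueness (up to contractible choice) of standard contact neighborhoods around a marked point will identify $S \cap \mathcal{U}_\varepsilon((e_0, p_0))$ with $\mathcal{U}_\varepsilon((e_0, p_0))$ in the weak homotopy sense.

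Once this step is in place, the original inclusion factors as
\[
\Emb_\std \xrightarrow{\ \Gamma \ } S \hookrightarrow \Emb(\NS^2, M) \times \NS^2 \xrightarrow{\ \mathrm{pr}_1 \ } \Emb(\NS^2, M),
\]
where the first two arrows compose to a weak equivalence. Therefore the homotopy fiber of $\Emb_\std \hookrightarrow \Emb$ agrees with that of the projection $\mathrm{pr}_1$, which is $\NS^2$.

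The hardest step will be verifying the local equivalence property of the microfibration trick: one needs parametric control over standard contact neighborhoods around a marked point and a continuous tracking of the positive elliptic singularity under $C^0$-small deformations of the embedding. This is where the tightness of $(M,\xi)$ and Giroux's criterion (Theorem~\ref{thm:criterion}) should enter, guaranteeing that in a tight neighborhood the normalization is unobstructed and that the contraction onto the local model is canonical up to homotopy.
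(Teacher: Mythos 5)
Your factorization is logically sound: \emph{if} the graph inclusion $S \hookrightarrow \Emb(\NS^2,M)\times\NS^2$ were a weak homotopy equivalence, then the homotopy fiber of $\Emb_\std \hookrightarrow \Emb$ would coincide with that of $\pr_1$ and the theorem would follow. The gap is that this intermediate claim is strictly stronger than the theorem --- it asserts a product splitting $\Emb_\std \simeq \Emb(\NS^2,M)\times\NS^2$ rather than merely the existence of a fibration sequence $\NS^2 \to \Emb_\std \to \Emb$ --- and it fails, precisely at the step you flag as the hardest one: the Local Equivalence Property does not hold for your pair. Take $(e_0,p_0)\in S$ and let $\nu \cong \NS^2\times(-\varepsilon,\varepsilon)$ be an $I$-invariant neighborhood of the standard convex sphere $\Sigma_0 = e_0(\NS^2)$. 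On the component of core-parallel spheres, $(\Emb\times\NS^2)\cap\mathcal{U}_\varepsilon$ retracts onto $\{\text{reparametrizations of } e_0\}\times B_\varepsilon(p_0) \simeq \Diff(\NS^2) \simeq \Ort(3)$, whereas $S\cap\mathcal{U}_\varepsilon$ retracts onto the reparametrizations $e_0\circ\psi$ with $\psi^{-1}(p_0)\in B_\varepsilon(p_0)$, i.e.\ onto $\Diff(\NS^2,p_0)\simeq \Ort(2)$, because the marking forces (a point near) $p_0$ to hit the elliptic singularity of the \emph{same} local family of standard images. The inclusion $\Ort(2)\hookrightarrow \Ort(3)$ is not a weak equivalence ($\pi_1$ is $\Z\to\Z/2$, $\pi_3$ is $0\to\Z$), and its defect is exactly $\Ort(3)/\Ort(2)\simeq \NS^2$. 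So marking a single point does not rigidify enough for the microfibration trick; it only records where the elliptic point sits while leaving the full $\Diff(\NS^2)$ of reparametrizations on the smooth side.

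A global sanity check in $(\NS^3,\xi_\std)$ points at the same obstruction: every standard embedding is of the form $\phi\circ e_0\circ\psi$ with $\phi\in\Cont(\NS^3,\xi_\std)\simeq \U(2)$ and $\psi\in\Diff(\NS^2)\simeq \Ort(3)$, and the orbit map is a fibration whose fiber (the setwise stabilizer of a standard convex sphere) is homotopy equivalent to a circle; hence $\pi_3(\Emb_\std)$ has rank at most $2$, while $\pi_3\bigl(\Emb(\NS^2,\NS^3)\times\NS^2\bigr)\cong\pi_3(\SO(4))\oplus\pi_3(\NS^2)$ has rank $3$, so no product splitting can exist. The $\NS^2$ in the statement is the homogeneous space $\Diff(\NS^2)/\Diff(\NS^2,\mathcal{F}_{\std})\simeq \Ort(3)/\Ort(2)$ of positions of the pulled-back characteristic foliation on the domain. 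To make the microfibration trick bite one must rigidify by the full foliation condition $e^*\mathcal{F}=\mathcal{F}_{\std}$ (exactly as the paper does for $n$-standard annuli, where $j^*\xi=\mathcal{F}_n$ is imposed), and then extract the $\NS^2$ from the resulting fibration $\Emb_\std\to\NS^2$, $e\mapsto e^*\mathcal{F}$, by a diagram chase --- not as a product factor split off by the graph of the elliptic-point map.
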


\subsubsection{Standard tori}\label{subsub:StandardTori} Let $T^2$ be an embedded torus in a contact $3$-manifold $(M,\xi)$. We fix a coordinate system on $T^2$ so that the slope of homologically essential curves on $T^2$ is well defined. In case of $T^2$ being the boundary of a solid torus, we define the slope of a curve $p\lambda + q\mu$ to be $q/p$ where $\mu$ is a meridian and $\lambda$ is a preferred longitude of the torus. 

\begin{definition}
  A convex torus $T^2 \subset (M,\xi)$ is a \dfn{standard convex torus} if 
\begin{itemize}
  \item $\Gamma$ consists of two dividing curves of slope $p/q$, 
  \item $T^2$ is foliated by Legendrian curves of any slope different from $p/q$, called \dfn{Legendrian rulings} and
  \item there are two singular lines parallel to the dividing curves, called \dfn{Legendrian divides}. 
\end{itemize}
\end{definition}

\subsubsection{Standard annuli} Let $A$ be an embedded annulus in a contact $3$-manifold $(M,\xi)$. We will study the space of convex annulus where all dividing curves run from one boundary component of $A$ to another component.  

\begin{definition}
  An annulus $A = \NS^1 \times [0,1] \subset (M,\xi)$ is an \dfn{$n$-standard convex annulus} if  
  \begin{itemize}
    \item  $\Gamma$ consists of $2n$ dividing curves $\{\frac{k-1}{n}\pi\}\times[0,1]$, $k\in\{1,\ldots,2n\}$.
    \item The characteristic foliation $\mathcal{F}_{n}$ consists of 
    \begin{itemize}
      \item Legendrian rulings $\NS^1\times\{t\}$, $t\in[0,1]$ and  
      \item  $2n$ Legendrian divides $\{\frac{2k-1}{2n}\pi\}\times[0,1]$, $k\in\{1,\ldots,2n\}$.
    \end{itemize}
  \end{itemize}
\end{definition}

Let $j \colon A \rightarrow (M,\xi)$ be a smooth embedding of an annulus $A$ into $M$. We say that $j$ is an \dfn{$n$-standard embedding} if $j^*\xi = \mathcal{F}_{n}$. We denote by $\Emb_{n,j}(A,(M,\xi))$ the space of $n$-standard embeddings that coincide with $j$ over $\Op(\partial A)$. Similarly, we denote by $\Emb_{j}(A,M)$ the space of smooth embeddings that coincide with $j$ over $\Op(\partial A)$. We also denote by $\Emb_{j}^{0}(A,M)$ the path connected component of $\Emb_{j}(A,M)$ that contains $j$ and define $\Emb_{n,j}^{0}(A,(M,\xi)) := \Emb_{n,j}(A,(M,\xi))\cap\Emb_{j}^{0}(A,M)$.

Let $A$ be an $n$-standard convex annulus. We define the {\it holonomy} of $A$ to be the isotopy class of the dividing curves on $A$. See Figure~\ref{fig:holonomy} for  an example of two standard convex annuli with different holonomies. If we fix a parametrization of the annulus, we can also define the holonomy to be the slope of dividing curves. Notice that if a contactomorphism fixes a standard annulus, it should preserve the holonomy. Thus contactomorphisms are more rigid than diffeomorphisms in general.

\begin{proposition}\label{prop:rigidityContacto}
  Suppose a contactomorphism $f\colon (M,\xi) \to (M,\xi)$ satisfies $f|_{\bd A} = id$ and $f(A) = A$, i.e. $f$ fixes $\bd A$ pointwise and $A$ setwise. Then $f|_{\Op(A)}$ is contact isotopic to the identity. 
\end{proposition}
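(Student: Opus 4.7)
The strategy is to reduce the statement to an isotopy problem for $f|_A$ in the space of foliation-preserving diffeomorphisms, and then lift to a contact isotopy on a neighborhood of $A$ using Giroux's germ uniqueness near convex surfaces.

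Since $f$ is a contactomorphism with $f(A)=A$, the restriction $f|_A$ preserves the characteristic foliation $\mathcal{F}_n$. Combined with $f|_{\partial A}=\mathrm{id}$, this forces $f|_A$ to preserve each Legendrian divide $\{\theta_k\}\times[0,1]$ setwise (each divide has a distinct pair of endpoints on $\partial A$, all fixed by $f$), and to send each Legendrian ruling $\NS^1\times\{t\}$ to another ruling (since rulings are closed leaves, topologically distinct from the divides). Hence $f|_A$ has the form $(\theta,t)\mapsto (g_t(\theta), h(t))$ with $h\in\Diff^+([0,1],\partial)$ and $g_t\in\Diff^+(\NS^1,\{\theta_1,\ldots,\theta_{2n}\})$ satisfying $g_0=g_1=\mathrm{id}$. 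The space of such data is contractible: $\Diff^+([0,1],\partial)$ is convex, and $\Diff^+(\NS^1,\{\theta_k\})$ deformation retracts onto a product of $2n$ copies of $\Diff^+([0,1],\partial)$, so the based loop space is also contractible. One thus obtains an isotopy $\phi_s(\theta,t)=(g^s_t(\theta),h_s(t))$, $s\in[0,1]$, with $\phi_0=f|_A$ and $\phi_1=\mathrm{id}_A$, through $\mathcal{F}_n$-preserving diffeomorphisms fixing $\partial A$ pointwise.

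Finally, I would lift $\phi_s$ to a smooth family $\Phi_s$ of contactomorphisms of a neighborhood $\Op(A)$ with $\Phi_s|_A=\phi_s$, via a parametric version of Giroux's theorem: the germ of $\xi$ along $A$ is determined by $\mathcal{F}_n$, so any $\mathcal{F}_n$-preserving diffeomorphism of $A$ extends to a contactomorphism germ along $A$, smoothly in parameters. Both $\Phi_0$ and $f|_{\Op(A)}$ restrict to $f|_A$ on $A$, so $f\circ\Phi_0^{-1}$ is a contactomorphism germ of $\Op(A)$ identity on $A$. The group of such germs is contractible, as one can straighten any such germ to the identity along the normal direction using the standard $I$-invariant model of $\xi$ near $A$, so $f\circ\Phi_0^{-1}$ is contact isotopic to the identity. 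Similarly $\Phi_1$, which is identity on $A$, is contact isotopic to the identity. Concatenating these isotopies yields a contact isotopy from $f|_{\Op(A)}$ to the identity. The main technical step will be the parametric lifting of $\phi_s$, which requires establishing Giroux's germ uniqueness in families; all other pieces of the argument are elementary once the correct form of $f|_A$ is identified.
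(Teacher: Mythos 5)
Your proof is correct, and while it shares the paper's overall skeleton (first pin down the isotopy class of $f|_A$, then upgrade to a contact isotopy of $\Op(A)$ via Giroux's uniqueness of germs along a surface), you execute the first step by a genuinely different and more hands-on route. The paper argues at the level of the mapping class group: $f|_A$ is smoothly isotopic to a power $\tau^n$ of the Dehn twist, and $n=0$ because a contactomorphism preserves the dividing set and hence the holonomy; it then needs Giroux flexibility to promote an arbitrary smooth isotopy $f|_A\simeq \mathrm{id}$ to a contact one. You instead exploit the full preservation of the characteristic foliation to extract the normal form $(\theta,t)\mapsto(g_t(\theta),h(t))$ and show that the entire group of such maps is contractible; this rules out Dehn twists for free (a twist does not preserve the Legendrian divides) and yields an isotopy already through $\mathcal{F}_n$-preserving maps, so only the (parametric) germ-uniqueness statement is needed, not flexibility. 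Your version is more self-contained and proves something stronger (contractibility rather than mere connectedness of the relevant group), at the price of the parametric lifting step you correctly flag as the technical core. Two small imprecisions: the rulings are not single closed leaves of $\mathcal{F}_n$ --- they pass through the singular Legendrian divides --- so the fact that $f$ permutes them should be deduced by matching up the arcs of the rulings between consecutive divides, whose endpoints are determined by the action on the divides; and $\Diff^+(\NS^1,\{\theta_1,\ldots,\theta_{2n}\})$ is homeomorphic to (not merely a retract of) a product of $2n$ copies of $\Diff^+([0,1],\partial)$.
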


\begin{proof}
  First, since $f|_A$ is a diffeomorphism on $A$, it is smoothly isotopic to $\tau^n$ where $\tau$ is a Dehn twist on $A$. However, since a contactomorphism preserves the dividing curves (so preserves the holonomy), $n$ should be $0$. Thus $f|_A$ is smoothly isotopic to the identity. Then by Giroux's flexibility theorem, $f(A)$ is contact isotopic to $A$. Furthermore, since a germ of a contact structure on $A$ is determined by the characteristic foliation on $A$, we may assume that $f|_{\Op(A)}$ is contact isotopic to the identity where $\Op(A)$ is an $I$-invariant neighborhood of $A$ ({\it c.f.}~\cite[Lemma~2.7]{Min:cmcg}).
\end{proof}

The following two technical lemmas will be used to prove Theorem~\ref{thm:StandardAnnuli}. The first one concerns the density property in Theorem \ref{thm:microfibration} for standard convex annuli. 

\begin{lemma}\label{lem:DensityPropertyStdAnnuli}
  Let $(M,\xi)$ be a tight contact $3$-manifold and $j\colon A\hookrightarrow(M,\xi)$ an $n$-standard embedding for $n \in \mathbb{N}$. Suppose each component of $j(\partial A)$ has the maximal twisting number with respect to any given framing. When $n = 1$, assume additionally that $j(A)$ unwraps in some tight covering of $(M,\xi)$ and the boundary still has the maximal twisting number. Then the inclusion 
  \[
    \Emb_{n,j}^{0}(A,(M,\xi))\hookrightarrow \Emb_{j}^{0}(A,M)
  \] 
  is $C^{\infty}$-dense (and hence $C^0$-dense).
\end{lemma}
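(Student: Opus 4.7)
The plan is to take an arbitrary smooth embedding $j' \in \Emb_j^0(A, M)$ and to produce an $n$-standard embedding $j'' \in \Emb_{n,j}^0(A,(M,\xi))$ that is arbitrarily $C^\infty$-close to $j'$. I would first apply Giroux's flexibility theorem, relative to $\Op(\partial A)$, where the boundary is already Legendrian by hypothesis, to obtain a $C^\infty$-small perturbation $\widetilde{j}$ of $j'$ such that $\widetilde{A} := \widetilde{j}(A)$ is a convex surface with the same Legendrian boundary $j(\partial A)$.

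The heart of the argument is to show that the dividing set $\Gamma$ of $\widetilde{A}$ coincides, up to isotopy on $A$, with the $n$-standard dividing set of $2n$ parallel essential arcs from $\partial_0 A$ to $\partial_1 A$. By Giroux's criterion (Theorem~\ref{thm:criterion}) and tightness of $(M,\xi)$, no dividing curve is contractible. Since each boundary component of $j(\partial A)$ realizes the maximal twisting number $-n$, we have $|\Gamma\cap \partial_i \widetilde{A}|=2n$. Writing $\Gamma$ as the disjoint union of $a$ essential arcs from $\partial_0$ to $\partial_1$, of $b$ boundary-parallel arcs on $\partial_0$, of $c$ boundary-parallel arcs on $\partial_1$, and of $\ell$ essential closed curves, the boundary counts force $a+2b=a+2c=2n$. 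To rule out boundary-parallel arcs I would argue that such an arc $\delta$ on $\partial_0\widetilde{A}$ cuts off a disk in $\widetilde{A}$ whose dividing structure, viewed inside an $I$-invariant neighborhood of $\widetilde{A}$ and combined with the Legendrian realization principle, produces a bypass whose attachment to a standard convex torus neighborhood of $\partial_0\widetilde{A}$ destabilizes this Legendrian boundary component, contradicting the maximality of its twisting number. Hence $b=c=0$ and $a=2n$. Since dividing curves are pairwise disjoint and every essential closed curve in $A$ separates $\partial_0$ from $\partial_1$, the presence of $2n>0$ crossing arcs forces $\ell=0$, so $\Gamma$ is the standard configuration.

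Once the dividing set has been identified with the standard one, I would apply Giroux's flexibility theorem once more inside a germ of an $I$-invariant neighborhood of $\widetilde{A}$ to $C^\infty$-perturb the characteristic foliation to match $\mathcal{F}_{n}$, while keeping the dividing set and the boundary fixed. The resulting embedding $j''$ is $n$-standard and $C^\infty$-close to $j'$. Smooth proximity places $j''$ in the same smooth path component as $j'$, and hence as $j$, so $j''\in\Emb_{n,j}^0(A,(M,\xi))$ as required.

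The hard part will be ruling out boundary-parallel arcs in the borderline case $n=1$. There the ambient room for a standard convex torus neighborhood of a boundary component is so tight that the destabilizing bypass produced from a boundary-parallel arc need not fit inside $(M,\xi)$ in a way that directly yields a contradiction; indeed the destabilization might be obstructed precisely by how $\widetilde{A}$ wraps around in $M$. The additional hypothesis that $j(A)$ unwraps in some tight cover $(\widetilde{M},\widetilde{\xi})$ in which the lifted boundary still has maximal twisting is exactly the mechanism that transfers the destabilization argument to the cover, where the tight twisting obstruction becomes unambiguous and yields the desired contradiction, thereby completing Step 3 for $n=1$.
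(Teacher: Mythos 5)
Your combinatorial analysis of the dividing set (no contractible curves by tightness, no boundary-parallel arcs by maximal twisting, no essential closed curves by disjointness from the crossing arcs) is correct and matches the first step of the paper's proof, but it only pins down $\Gamma$ up to the combinatorial type of its arcs, not up to isotopy rel $\partial A$. Since the perturbation fixes $\Op(\partial A)$, the endpoints of the $2n$ crossing arcs are fixed, and each arc still carries a winding number around the core of the annulus --- the \emph{holonomy} discussed in Section~\ref{subsec:stdSurfaces}. An $n$-standard embedding must pull $\xi$ back to the specific foliation $\mathcal{F}_{n}$, whose dividing arcs are the straight vertical ones; Giroux flexibility only realizes foliations adapted to the \emph{given} dividing set, so if the convex perturbation of $j'$ has nonzero holonomy your final step cannot produce $\mathcal{F}_{n}$ rel boundary. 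This is exactly what the second half of the paper's proof is for: it takes a smooth isotopy from the perturbed annulus to $j(A)$, discretizes it into bypass attachments (Theorem~\ref{thm:discretization}), and shows every bypass is trivial --- effective bypasses are excluded because for $n\geq 2$ they must meet three distinct dividing curves and hence create boundary-parallel arcs, while non-trivial non-effective ones would create contractible dividing curves, contradicting tightness --- so the contact germ, and in particular the holonomy, agrees with that of $j$. Your static argument has no substitute for this step, and it is a genuine gap.

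Relatedly, you have attached the $n=1$ hypothesis to the wrong difficulty. The exclusion of boundary-parallel dividing arcs via destabilization works verbatim for $n=1$; no covering is needed there. What fails for $n=1$ is the effective-bypass analysis above: with only two dividing curves an effective attaching arc can meet the same curve at its first and third intersection points, producing no boundary-parallel arcs but changing the holonomy (this is precisely the (de)stabilization phenomenon described in the Remark following the lemma). The unwrapping hypothesis is what lets one lift such a bypass to a cover where it meets three genuinely distinct dividing curves, restoring the contradiction. As written, your proof both omits the holonomy control and deploys the $n=1$ hypothesis where it is not needed.
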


\begin{proof}
  Let $i \colon A \to M$ be a smooth embedding in $\Emb_j^0(A,M)$. We first claim that $i$ can be $C^{\infty}$-small perturbed to be an $n$-standard embedding. By Grioux's flexibility theorem, we can make a $C^{\infty}$-small perturbation of $i$ so that $i(A)$ becomes a convex annulus while keeping $\Op(i(\bd A))$ fixed. Since each component of $i(\bd A)$ has the maximal twisting number, there are no boundary parallel dividing curves on $i(A)$. Otherwise, we can find a bypass containing a boundary parallel dividing curve and decrease the twisting number using this bypass. This implies that every dividing curve runs from one boundary component of $A$ to another component. By applying Giroux's flexibility theorem again, we can arrange the characteristic foliation $i^*\xi$ to be $n$-standard.

  It remains to show that $i$ is isotopic to $j$ through the standard embeddings. We first deal with the case $n > 1$. Let $\phi_t \colon A \to M$ be a smooth isotopy rel boundary from $i$ to $j$. By Colin's isotopy discretization (Theorem~\ref{thm:discretization}), there is a sequence $0 = t_0 < \cdots < t_n = 1$ such that $\phi_{t_{i+1}}(A)$ is obtained by attaching a bypass to $\phi_{t_{i}}(A)$. We inductively show that $\phi_{t_{i}}(A)$ is contact isotopic to $\phi_{t_i}(A)$ rel boundary. Clearly this is true for $\phi_{t_0}(A)$. Now suppose the claim holds for $\phi_{t_i}(A)$. Recall that there are two types of bypasses: effective and non-effective bypasses. Since $n>1$, any effective bypass for $\phi_{t_i}(A)$ intersects three different dividing curves. This bypass yields boundary parallel dividing curves as shown in Figure~\ref{fig:effective}. By the same argument above, this contradicts the fact that $i(\bd A)$ has the maximal twisting number. It is straightforward to check there are only two types of non-effective bypasses for $\phi_{t_i}(A)$ as shown in Figure~\ref{fig:non-effective}. If this bypass produces a contractible curve, then this contradicts that $(M,\xi)$ is tight. Therefore, the only possible non-effective bypasses for $\phi_{t_i}(A)$ are trivial bypasses. Thus $\phi_{t_{i+1}}(A)$ is contact isotopic to $\phi_{t_i}(A)$ so it is contact isotopic to $i(A)$. This completes the inductive argument.
    
  Finally, from the inductive argument above and Lemma~\ref{lem:trivial}, $\phi_{[t_i,t_{i+1}]}(A)$ are $I$-invariant neighborhoods for $i = 0,...,n-1$. Thus we can build a contact isotopy $\psi_t\colon A \to (M,\xi)$ such that $\psi_t^*\xi$ are identical for all $t \in [0,1]$ and this completes the proof. 
  
  Now for $n = 1$, we can apply the same argument above but the only difference is that an effective bypass does not intersect three different dividing curves. However, since $A$ unwraps in some covering of $(M,\xi)$, the effective bypass lifts to a bypass that intersects three different dividing curves. Thus attaching this bypass gives the same contradiction that the boundary of the lifted annulus has the maximal twisting number. 
\end{proof}

\begin{figure}[htbp]{\scriptsize
  \begin{overpic}[tics=20]{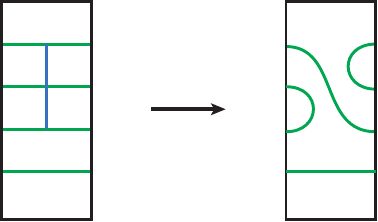}
  \end{overpic}}
  \vspace{0.2cm}
  \caption{An effective bypass attachment on an annulus}
  \label{fig:effective}
\end{figure}
  
\begin{remark}
  For Lemma~\ref{lem:DensityPropertyStdAnnuli}, the additional assumption when $n=1$ is essential. Without the assumption, there could be an effective bypass that intersects only two different dividing curves and it will change the holonomy of the dividing curves on $A$, see Figure~\ref{fig:holonomy}. This actually happens when we (de)stabilize a Legendrian knot. It will change the holonomy of an annulus by $1$, which is a part of a standard convex torus of the Legendrian knot. 
\end{remark}

\begin{figure}[htbp]{\scriptsize
  \begin{overpic}[tics=20]{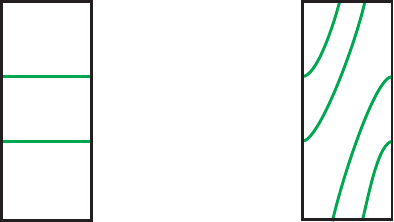}
  \end{overpic}}
  \vspace{0.2cm}
  \caption{Some possible holonomy of dividing curves on an annulus.}
  \label{fig:holonomy}
\end{figure}

The second lemma is about the local equivalence property in Theorem \ref{thm:microfibration} for standard convex annuli. Let $(U,\xi)$ be an $I$-invariant neighborhood of an $n$-standard annulus $A$. Notice that $(U = A \times [-1,1],\xi)$ is a standard solid torus after rounding the edges, and $A = A \times \{0\}$ separates $U$ into two solid tori $U_+ = U \times [0,1]$ and $U_- = U \times [-1,0]$. Consider an $n$-standard embedding $j \colon A \hookrightarrow (U,\xi)$. 

\begin{lemma}\label{lem:LocalPropertyStdAnnuli}
  The inclusion $\Emb_{n,j}(A,(U,\xi)) \hookrightarrow \Emb_{j}(A,U)$ is a homotopy equivalence. 
\end{lemma}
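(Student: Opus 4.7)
The plan is to show that both $\Emb_{n,j}(A,(U,\xi))$ and $\Emb_j(A,U)$ are weakly contractible, from which the inclusion is automatically a weak homotopy equivalence. The geometric engine is the $I$-invariance of $U=A\times[-1,1]$, which gives us the projection $\pi\colon U\to A$, the $I$-invariant contact vector field $\partial_t$, and the decomposition $U\setminus j(A)=U_+\sqcup U_-$ into two topological solid tori after rounding corners.

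For the smooth case, the parametric isotopy extension theorem produces the Serre fibration
\[
  \Diff(U_+)\times\Diff(U_-) \;\longrightarrow\; \Diff(U) \;\xrightarrow{\phi\mapsto\phi\circ j}\; \Emb_j(A,U),
\]
in which the fiber over $j$ splits as the product of diffeomorphism groups of the two complementary solid tori (both fixed near their entire boundary, including $A$). Since $U$ and $U_\pm$ are all solid tori, Hatcher's resolution of the Smale conjecture for Haken $3$-manifolds gives contractibility of the three diffeomorphism groups, so $\Emb_j(A,U)$ is weakly contractible. In the contact setting, I would build the analogous Serre fibration
\[
  \Cont(U_+,\xi|_{U_+})\times\Cont(U_-,\xi|_{U_-}) \;\longrightarrow\; \Cont(U,\xi) \;\xrightarrow{\phi\mapsto\phi\circ j}\; \Emb_{n,j}(A,(U,\xi)),
\]
verifying the homotopy lifting property by a parametric Moser-type argument: any smooth family $e_s$ of $n$-standard embeddings has pullback forms $e_s^*\alpha$ determining the same characteristic foliation $\mathcal{F}_n$ on $A$, and Moser's trick applied in the $I$-invariant model then promotes $e_s$ to an ambient contact isotopy. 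The identification of the fiber as a product uses Proposition~\ref{prop:rigidityContacto}.

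It then remains to show that $\Cont(U,\xi)$ and $\Cont(U_\pm,\xi|_{U_\pm})$ are contractible. My plan is to use the flow of $\partial_t$ as a canonical deformation retract of each contactomorphism group onto the subgroup preserving the central slice; this subgroup projects onto the identity component of the group of boundary-fixing diffeomorphisms of $A$ preserving the $n$-standard characteristic foliation. A leafwise application of Smale's theorem, exploiting that any foliation-preserving diffeomorphism must preserve each Legendrian ruling circle and fix its $2n$ intersection points with the Legendrian divides, shows this last group is contractible. The main obstacle will be this contractibility step for $\Cont(U_\pm,\xi|_{U_\pm})$: these tight contact solid tori carry $2n$ boundary dividing curves, so their compressing disks meet the dividing set in $2n$ points, and for $n>1$ they are \emph{not} standard contact handlebodies in the sense of Theorem~\ref{thm:StandardTightHandlebody}, so that theorem cannot be invoked directly. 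The $\partial_t$-retraction combined with the leafwise Smale argument is the needed substitute, which must be installed with care near the rounded corners where $A$ meets $\partial U$.
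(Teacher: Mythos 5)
Your overall strategy (compare the smooth and contact isotopy--extension fibrations over $\Emb_{j}(A,U)$ and $\Emb_{n,j}(A,(U,\xi))$ and deduce the equivalence from contractibility of the fibers and total spaces) is the same as the paper's, but the proposal goes wrong exactly at the step you flag as the main obstacle. You claim that $U_\pm$ carry $2n$ dividing curves on their boundary, that their compressing disks meet the dividing set in $2n$ points, and that for $n>1$ they therefore fail to be standard contact handlebodies, so that Theorem~\ref{thm:StandardTightHandlebody} cannot be invoked. This is incorrect: you are conflating the $2n$ dividing \emph{arcs} of the annulus $A$ with the number of closed dividing \emph{curves} on the boundary torus after edge rounding. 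When the corners of $A\times[0,1]$ (resp.\ $A\times[-1,1]$) are rounded, the $2n$ arcs on each of the four faces connect up, shifting by half the spacing of their endpoints at each of the four corners, into exactly \emph{two} closed dividing curves, each of which meets the meridian of the solid torus once. Equivalently, $U$ and $U_\pm$ are standard neighborhoods of a Legendrian ruling curve of $A$ (which has twisting $-n$), i.e.\ standard contact solid tori; this is precisely what the paper means by ``$(U,\xi)$ is a standard solid torus after rounding the edges.'' Hence Theorem~\ref{thm:StandardTightHandlebody} applies directly to $U$, $U_+$ and $U_-$, and the lemma follows from the two fibrations by the five lemma --- that is the paper's entire proof.

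The substitute you propose --- retracting $\Cont(U_\pm,\xi|_{U_\pm})$ via the flow of $\partial_t$ onto a group of foliation-preserving diffeomorphisms of a slice and then a ``leafwise Smale'' argument --- would not work even if it were needed. A contactomorphism of a solid torus rel boundary is not determined by its restriction to a two-dimensional slice, there is no central slice of $U_\pm$ preserved by the retraction, and the contractibility of $\Cont$ of a standard contact handlebody is a genuinely nontrivial input (it rests on the Eliashberg--Mishachev theorem for the tight ball) that your sketch does not reprove. Once this step is replaced by the correct observation that $U$ and $U_\pm$ are standard contact handlebodies, your argument collapses to the paper's.
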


\begin{proof}
  Consider the following commutative diagram 
  \begin{displaymath} 
    \xymatrix@M=10pt{
    \Cont(U_{-},\xi)\times\Cont(U_+,\xi)\  \ar@{^{(}->}[d]\ar@{^{(}->}[r]  & \Cont(U,\xi) \ar[r] \ar@{^{(}->}[d] & \Emb_{n,j}(A,(U,\xi))   \ar@{^{(}->}[d] \\
    \Diff(U_{-})\times\Diff(U_{+}) \ar@{^{(}->}[r] & \Diff(U) \ar[r] &  \Emb_{j}(A,U) }
  \end{displaymath}
  in which the rows are fibrations. The statement follows from Theorem~\ref{thm:StandardTightHandlebody}.
\end{proof}

Now we are ready to apply Theorem~\ref{thm:microfibration} to the space of $n$-standard embeddings. 

\begin{theorem}\label{thm:StandardAnnuli}
  Let $(M,\xi)$ be a tight contact $3$-manifold and $j\colon A\hookrightarrow(M,\xi)$ an $n$-standard embedding for $n \geq 1$. Suppose each component of $\partial A$ has the maximal twisting number with respect to any given framing. When $n = 1$, assume additionally that $j(A)$ unwraps in some tight covering of $(M,\xi)$ and the boundary still has the maximal twisting number. Then, the inclusion 
  \[
    \Emb_{n,j}^{0}(A,(M,\xi))\hookrightarrow \Emb_{j}^{0}(A,M)
  \]  
  is a weak homotopy equivalence. 
\end{theorem}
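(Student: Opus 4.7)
The plan is to invoke the microfibration trick, Theorem~\ref{thm:microfibration}, with $E_0 = \Emb_{n,j}^{0}(A,(M,\xi))$ and $E_1 = \Emb_{j}^{0}(A,M)$. The density hypothesis is exactly Lemma~\ref{lem:DensityPropertyStdAnnuli}; this is where the assumption on maximal twisting number (and on unwrapping in a tight covering when $n=1$) enters the argument, since it is used there to rule out effective and contractible-producing bypasses and keep only trivial ones. It remains to verify the local equivalence property: for every $e \in E_1$, there exists $\varepsilon(e) > 0$ such that for any $0 < \varepsilon < \varepsilon(e)$, the inclusion $E_0 \cap \mathcal{U}_\varepsilon(e) \hookrightarrow E_1 \cap \mathcal{U}_\varepsilon(e)$ is a weak homotopy equivalence.

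To produce this local model, first apply Lemma~\ref{lem:DensityPropertyStdAnnuli} to choose an $n$-standard embedding $e'$ that is $C^\infty$-close to $e$, and let $(U,\xi|_U)$ be an $I$-invariant neighborhood of $e'(A)$. After edge rounding, $U$ is a standard contact solid torus split by $e'(A)$ into two standard contact handlebodies $U_+$ and $U_-$. Shrinking $\varepsilon(e)$ if necessary, we may ensure that every $f \in \mathcal{U}_\varepsilon(e)$ has image contained in $U$, so that
\[
  E_0 \cap \mathcal{U}_\varepsilon(e) \subseteq \Emb_{n,j}(A,(U,\xi|_U)), \qquad E_1 \cap \mathcal{U}_\varepsilon(e) \subseteq \Emb_{j}(A,U)
\]
as $C^0$-open subsets containing $e$. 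The inclusion between the two ambient spaces is a weak homotopy equivalence by Lemma~\ref{lem:LocalPropertyStdAnnuli}, which rests on Theorem~\ref{thm:StandardTightHandlebody} and the contractibility of $\Diff(U)$ and $\Diff(U_\pm)$ from Hatcher's theorem.

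The main obstacle is showing that this global weak equivalence descends to the $C^0$-neighborhood $\mathcal{U}_\varepsilon(e)$. Here the key observation is that both $\Emb_{n,j}(A,(U,\xi|_U))$ and $\Emb_j(A,U)$ are in fact contractible, because in the fibrations
\[
  \Cont(U_-,\xi)\times\Cont(U_+,\xi) \hookrightarrow \Cont(U,\xi) \to \Emb_{n,j}(A,(U,\xi|_U)),
\]
\[
  \Diff(U_-)\times\Diff(U_+) \hookrightarrow \Diff(U) \to \Emb_j(A,U)
\]
all four automorphism groups are contractible, giving contractible bases. Hence, for $\varepsilon$ sufficiently small, $E_0 \cap \mathcal{U}_\varepsilon(e)$ and $E_1 \cap \mathcal{U}_\varepsilon(e)$ are contractible $C^0$-open neighborhoods of $e$, one sitting inside the other, and therefore the inclusion is a weak homotopy equivalence. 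Feeding the density and local equivalence properties into Theorem~\ref{thm:microfibration} concludes the proof.
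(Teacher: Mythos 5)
Your overall skeleton is exactly the paper's: feed the microfibration trick (Theorem~\ref{thm:microfibration}) with $E_0=\Emb_{n,j}^{0}(A,(M,\xi))$ and $E_1=\Emb_{j}^{0}(A,M)$, getting density from Lemma~\ref{lem:DensityPropertyStdAnnuli} and local equivalence from Lemma~\ref{lem:LocalPropertyStdAnnuli}. The gap is in your verification of the local equivalence property. You choose $U$ to be an $I$-invariant neighborhood of a nearby standard annulus $e'(A)$, shrink $\varepsilon$ so that $E_0\cap\mathcal{U}_\varepsilon(e)$ and $E_1\cap\mathcal{U}_\varepsilon(e)$ sit \emph{inside} $\Emb_{n,j}(A,(U,\xi|_U))$ and $\Emb_j(A,U)$ respectively, show the two ambient spaces are weakly contractible via the restriction fibrations, and then conclude that ``hence'' the open subsets $E_i\cap\mathcal{U}_\varepsilon(e)$ are contractible. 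That inference is not valid: an open subset of a contractible space need not be contractible (an annulus inside a disk already fails), and nothing in your argument controls the homotopy type of the proper subsets $E_i\cap\mathcal{U}_\varepsilon(e)\subsetneq\Emb_j(A,U)$. A secondary slip: $e$ is an arbitrary element of $E_1$, so it does not in general lie in $\Emb_{n,j}(A,(U,\xi|_U))$; only the smooth containment holds.

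The repair, and the way Lemma~\ref{lem:LocalPropertyStdAnnuli} is actually meant to be used, is to turn your containment into an equality. By the definition of $\mathcal{U}_\varepsilon(e)$, the set $E_1\cap\mathcal{U}_\varepsilon(e)$ consists precisely of the embeddings of $E_1$ whose image lies in the open $\varepsilon$-tube $V_\varepsilon$ around $e(A)$; that is, it is (a union of path components of) $\Emb_j(A,V_\varepsilon)$, and likewise for $E_0$. One then runs the two restriction fibrations of Lemma~\ref{lem:LocalPropertyStdAnnuli} on $V_\varepsilon$ itself, choosing $\varepsilon(e)$ small enough that $V_\varepsilon$ is a tubular neighborhood of $e(A)$ which, after the $C^\infty$-small perturbation to a standard $e'$ provided by Lemma~\ref{lem:DensityPropertyStdAnnuli}, is split by a standard convex annulus into two standard contact handlebodies. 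Contractibility of $\Diff$ and $\Cont$ of the tube and of its two halves (Theorem~\ref{thm:StandardTightHandlebody} and Hatcher) then gives weak contractibility of the relevant components of the \emph{bases}, which are now literally the $\varepsilon$-neighborhoods $E_i\cap\mathcal{U}_\varepsilon(e)$, and the local equivalence property follows. With that substitution your argument matches the paper's proof.
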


\begin{proof}
  According to Lemmas~\ref{lem:DensityPropertyStdAnnuli} and~\ref{lem:LocalPropertyStdAnnuli}, the hypotheses of Theorem \ref{thm:microfibration} are fulfilled for $E_0=\Emb_{n,j}^{0}(A,(M,\xi))$ and $E_1=\Emb_{j}^{0}(A,M)$. Thus the result follows.
\end{proof}

Theorem~\ref{thm:StandardAnnuli} allows us to prove a parametric gluing result along $n$-standard annuli. This result will be the key in the rest of the article. First, we introduce some notation. Let $(M,\xi)$ be a (possibly disconnected) compact tight contact $3$-manifold with convex boundary. Assume that there are two disjoint embedded $n$-standard annuli $A_i\subseteq\partial M$ for $i=1,2$. Assume that $n\geq 2$ and $\partial A_i$ maximizes the twisting number with respect to any framing. From this data we obtain a new contact $3$-manifold $(M_A,\xi_A)$ by gluing $A_1$ and $A_2$ together. Here, we denote by $A\subseteq M_A$ the annulus obtained after identifying $A_1$ with $A_2$. 
We denote by $\Tight^{\twb}(M)$ the space of tight contact structures on $M$ that coincide with $\xi$ over $\Op(\partial M)$ and for which $\partial A_i$ maximizes the twisting number. Consider also the space of tight contact structures $\Tight^{\twb}(M_A)$ that is defined in the same way. 

\begin{theorem}\label{thm:GluingAnnuli}
  With the notation above, assume that $n\geq 2$ and $\partial A_i$ maximizes the twisting number in $(M,\xi)$. When $n = 1$, assume additionally that $j(A)$ unwraps in some tight covering of $(M,\xi)$ and the boundary still has the maximal twisting number. Then, the gluing map 
  \[
    \#_A \colon \Tight^{\twb}(M)\rightarrow \Tight^{\twb}(M_A),\quad\hat{\xi}\mapsto \#_A(\hat{\xi})=\hat{\xi}_A
  \]  
  is well-defined and is a homotopy equivalence.
\end{theorem}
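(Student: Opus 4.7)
The plan is to build a homotopy inverse to $\#_A$ by the natural cutting operation, with Theorem \ref{thm:StandardAnnuli} providing the parametric control on $n$-standard annuli that makes this inverse well-defined. First I would verify well-definedness: since both $A_1, A_2 \subseteq \partial M$ are $n$-standard with matching characteristic foliations, the glued structure $\xi_A$ is smooth, and tightness follows from Colin's gluing theorem along convex surfaces. When $n \geq 2$, the image of $A_1 \sim A_2$ carries $2n$ dividing curves on a non-sphere surface, none of them contractible, so Giroux's criterion guarantees tightness of an $I$-invariant neighborhood; when $n = 1$ the additional unwrapping hypothesis plays the analogous role. The maximal-twisting condition on the remaining boundary annuli of $M_A$ is preserved since those components are untouched by the gluing, and continuity is standard.

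Next I would introduce the auxiliary space
\[
\mathcal{E} = \bigl\{(\hat{\xi}_A, e) : \hat{\xi}_A \in \Tight^{\twb}(M_A),\ e \in \Emb^0_{n, j_{\can}}(A, (M_A, \hat{\xi}_A))\bigr\},
\]
where $j_{\can}$ is the canonical parametrization of the annulus $A \subseteq M_A$ formed by identifying $A_1$ with $A_2$. There are two natural structure maps: the forgetful projection $\pi \colon \mathcal{E} \to \Tight^{\twb}(M_A)$, and the cutting map $\mathrm{Cut} \colon \mathcal{E} \to \Tight^{\twb}(M)$ which cuts $(M_A, \hat{\xi}_A)$ along $e(A)$ and identifies the result with $M$ via the canonical smooth isotopy class between $e$ and $j_{\can}$. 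The gluing map factors as $\#_A = \pi \circ s$, where the section $s(\hat{\xi}) = (\#_A(\hat{\xi}), j_{\can})$ satisfies $\mathrm{Cut} \circ s = \Id$. The strategy is to show that both $\pi$ and $\mathrm{Cut}$ are Serre fibrations with fibers weakly equivalent to the smooth embedding space $\Emb^0_{j_{\can}}(A, M_A)$, with matching fiber identifications; granted this, $s$ will be a weak equivalence and therefore so will $\#_A$.

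For $\pi$, the fiber over $\hat{\xi}_A$ is precisely $\Emb^0_{n, j_{\can}}(A, (M_A, \hat{\xi}_A))$, which is weakly equivalent to the smooth embedding space by direct application of Theorem \ref{thm:StandardAnnuli}. For $\mathrm{Cut}$, a fiber over $\hat{\xi} \in \Tight^{\twb}(M)$ is parametrized by the choice of $n$-standard embedding $e$ used to reglue $\hat{\xi}$ to a structure on $M_A$, and Theorem \ref{thm:StandardAnnuli} once more identifies this fiber with the same smooth embedding space. The section $s$ is pinned at the distinguished basepoint $j_{\can}$ in both identifications, and a diagram chase then upgrades $s$ to a weak equivalence.

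I expect the main obstacle to be proving the compatibility of the two fiber identifications on $\mathcal{E}$. This amounts to a parametric uniqueness statement for $I$-invariant neighborhoods of $n$-standard annuli inside tight contact manifolds with prescribed exterior. I would handle this via the microfibration trick (Theorem \ref{thm:microfibration}) applied to the space of contact structures on a small neighborhood of $A \subseteq M_A$. The local-equivalence input comes from the contractibility of $\mathcal{C}(H_g, \xi)$ and $\Cont(H_g, \xi)$ on standard tight handlebodies (Theorem \ref{thm:StandardTightHandlebody}), applied to the two standard contact handlebodies obtained by further splitting the $I$-invariant neighborhood along $A$; the density input comes from Lemma \ref{lem:DensityPropertyStdAnnuli} together with Colin's isotopy discretization (Theorem \ref{thm:discretization}).
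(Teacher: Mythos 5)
Your overall architecture (an auxiliary space fibered over $\Tight^{\twb}(M_A)$ whose fibers are controlled by Theorem \ref{thm:StandardAnnuli}) is close in spirit to the paper's, but two steps do not close as written. First, well-definedness. Neither Colin's gluing theorem (which, as cited in this paper, covers gluing along spheres and along disks with Legendrian boundary, not along these annuli) nor Giroux's criterion establishes tightness of $(M_A,\hat{\xi}_A)$: Giroux's criterion only certifies tightness of an $I$-invariant neighborhood of the glued annulus and says nothing about overtwisted disks meeting $A$ essentially. This is the genuine content of well-definedness, and the paper proves it directly: given a putative overtwisted disk $\Delta$, isotope $A$ smoothly off $\Delta$, discretize the isotopy into bypass attachments (Theorem \ref{thm:discretization}), observe that every bypass on an $n$-standard annulus whose boundary maximizes twisting is trivial (as in Lemma \ref{lem:DensityPropertyStdAnnuli}), so the isotopy is through $n$-standard embeddings, and then contact isotopy extension carries $\Delta$ into the complement of $A$, contradicting tightness of $(M,\hat{\xi})$. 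You also need to check that $\partial A$ still maximizes twisting in $(M_A,\hat{\xi}_A)$, which the paper leaves as an exercise but which your proposal does not address.

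Second, the concluding diagram chase fails: a section of a fibration over a path-connected base is a weak equivalence precisely when the fiber is weakly contractible, and your fibers are weakly equivalent to $\Emb^0_{j_{\can}}(A,M_A)$, which is not contractible in general. ``Matching fiber identifications'' cannot rescue this: two fibrations on the same total space with equivalent fibers and a section of one do not force the composite to the other base to be an equivalence (take $\NS^1\times\NS^1$ with the two projections). Relatedly, your $\mathrm{Cut}$ map is not well-defined, since identifying $M_A\setminus e(A)$ with $M$ requires an actual isotopy from $e$ to $j_{\can}$ together with isotopy extension, not merely the isotopy class. The paper repairs both issues at once by letting $\mathcal{E}$ consist of pairs $(\xi,e_t)$ where $e_t$ is a \emph{path} of smooth embeddings from the inclusion to an $n$-standard embedding for $\xi$: the fiber of $\mathcal{E}\to\Tight^{\twb}(M_A)$ is then the homotopy fiber of $\Emb^0_{n,j}\hookrightarrow\Emb^0_{j}$, which \emph{is} contractible by Theorem \ref{thm:StandardAnnuli}. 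One extends the tautological section over the subcomplex to all of the parameter space, applies smooth and contact isotopy extension to the resulting family of isotopies, and pulls back to deformation retract $\Tight^{\twb}(M_A)$ onto the image of $\#_A$, which is a homeomorphism onto that image.
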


\begin{proof}
  We first show that the map is well-defined by showing $\#_A(\hat{\xi})=\hat{\xi}_A$ is tight whenever $\hat{\xi}$ is tight and $\bd A$ has the maximal twisting number in $(M_A,\hat{\xi}_A)$. The second statement is not hard so we leave it as an exercise to readers. The first statement follows from Colin's Discretization technique (Theorem~\ref{thm:discretization}). Indeed, assume by contradiction that $\hat{\xi}_A$ is overtwisted. Then, there is an embedded overtwisted disk $\Delta\subseteq (M_A,\hat{\xi}_A)$ such that $A\cap \Delta\neq \emptyset$. Let $i:A\hookrightarrow (M_A,\hat{\xi}_A)$ be the inclusion of $A$, which is an $n$-standard embedding. Since $\Delta$ is a disk we may find an isotopy of smooth embeddings $e_t:A\hookrightarrow M_A$, $t\in[0,1]$, such that $e_0=i$ and $e_1(A)\cap \Delta=\emptyset$. We should see that $e_t$ can be chosen to be an isotopy of standard embeddings. Indeed, if this is the case then the contact Isotopy Extension Theorem implies the existence of a contactomorphism $\varphi:(M_A,\hat{\xi}_A)\rightarrow (M_A,\hat{\xi}_A)$ such that $\varphi\circ e_0=e_1$. Therefore, the overtwisted disk $\varphi^{-1}(\Delta)\cap A=\emptyset$ which is a contradiction. To find the isotopy through standard embeddings we proceed as follows. First, according to Theorem~\ref{thm:discretization}, after a possibly $C^\infty$-small deformation, we can find a sequence $0=t_0<t_1<\ldots<t_N=1$ such that $e_{[t_j,t_{j+1}]}(A)$ is contactomorphic to a bypass attachment to $e_{t_j}(A)$. As shown in the proof of Lemma~\ref{lem:DensityPropertyStdAnnuli}, there is no non-trivial bypass on $e_{t_0}(A)$, which is an $n$-standard annulus. Thus $e_{[t_0,t_1]}(A)$ is an $I$-invariant neighborhood so $e_{t_1}(A)$ is also an $n$-standard annulus. We keep applying the same argument until we show $e_{[t_{N-1},t_N]}$ is an $I$-invariant neighborhood. Therefore, $e_0(A)$ and $e_1(A)$ are isotopic through the standard embeddings. 

  Finally, we check that $\#_A$ is a homotopy equivalence. Let $K$ be a compact $CW$ complex and $G\subseteq K$ a subcomplex. We will see that if $\xi^k\in\Tight^{\twb}(M_A)$, $k\in K$, is a family of contact structures such that $\xi^k\in\#_A(\Tight^{\twb}(M))$, for $k\in G$, then we can find a homotopy $\xi^k_t\in\Tight^{\twb}(M_A)$, $t\in[0,1]$, such that 
  \begin{itemize}
    \item [(i)] $\xi^k_t=\xi^k$, for $(k,t)\in K\times \{0\}\cup G\times[0,1]$; and
    \item [(ii)] $\xi^k_1\in\#_A(\Tight^{\twb}(M))$, $k\in K$.
  \end{itemize}
  A tight contact structure $\xi\in \Tight^{\twb}(M_A)$ lies in the image $\#_A(\Tight^{\twb}(M))$ if and only if the inclusion $i:A\hookrightarrow M_A$ is $n$-standard. This motivates us to consider the space $\mathcal{E}$ conformed by pairs $(\xi,e_t)$ where $\xi\in \Tight^{\twb}(M_A)$ and $e_t:A\hookrightarrow M_A$, $t\in[0,1]$, is an isotopy of smooth embeddings between the inclusion $e_0=i$ and an $n$-standard embedding $e_1$ for the contact structure $\xi$. Gray Stability implies that there is a fibration 
  \[ 
    \mathcal{E}\rightarrow \Tight^{\twb}(M_A), (\xi,e_t)\mapsto \xi.
  \]
  Moreover, the fiber is contractible as a consequence of Theorem \ref{thm:StandardAnnuli}. This implies that if we understand the family $\xi^k$, $k\in K$, as a map $p:K\rightarrow \Tight^{\tbb}(M_A)$ then the pull-back fibration $$p^*\mathcal{E}\rightarrow K$$ has contractible fibers. In particular, the trivial section over the subcomplex $G\subseteq K$ defined by the constant isotopy $e^k_t=i$, $(k,t)\in G\times[0,1]$, can be extended over the whole base $K$. That is, we can find a family of smooth isotopies $e^k_t$, $(k,t)\in K\times[0,1]$, such that $e^k_t=i$, for $(k,t)\in K\times\{0\}\cup G\times[0,1]$; and  $e^k_1$ is a standard embedding for $\xi^k$. Apply the isotopy extension theorem to find $\varphi^k_t\in \Diff(M_A)$, $(k,t)\in K\times [0,1]$, such that $\varphi^k_t\circ e^k_0=e^k_t$, $(k,t)\in K\times [0,1]$; and $\varphi^k_t=\Id$ for $(k,t)\in K\times\{0\}\cup G\times[0,1]$. It follows that the homotopy of contact structures 
  \[
    \xi^k_t=(\varphi^k_t)^*\xi^k, (k,t)\in K\times[0,1],
  \]
  satisfies the required properties. This concludes the argument.
\end{proof}

\section{Legendrian embedding spaces}\label{sec:LegendrianEmbeddings}

In this section, we will study basic properties of Legendrian embedding spaces. In Section~\ref{subsec:LegendriansContactomorphisms}, we will review the relation between Legendrian embedding spaces and the group of contactomorphisms of the link complement. In Section \ref{subsec:LegendriansGeneralManifold}, we will prove a useful technical result about Legendrian embeddings that are contained in a Darboux ball in tight contact $3$-manifolds. In Section \ref{subsec:Pi2Invariant}, we review the $\pi_2$-invariant of loops of Legendrians that are contractible as loops of smooth embeddings introduced in \cite{FMP:Legendrian}. In particular, we will deduce that \kalmans loop has an infinite order as a loop of smooth \em long \em embeddings. Finally, in Section \ref{subsec:ContactStructuresComplement} we will prove a ``folk'' result about contact structures in the complement of a Legendrian knot in a parametric setup. Throughout this section we will use the notation $\L(K,(M,\xi))$ and $\L_{(p_0,v_0)}(K,(M,\xi))$ to refer to the space of Legendrian embeddings of a fixed knot type (without prescribing the $\tb$ invariant).

\subsection{Legendrians and contactomorphisms}\label{subsec:LegendriansContactomorphisms}
In this subsection we review the relation between the space of Legendrian embeddings $\L(L)$ and the group of contactomorphisms of the complement $(C(L),\xi_\std)$ for any given Legendrian link $L\subseteq(\NS^3,\xi_\std)$. First we review the relation in the case of long smooth knot embeddings. 

\subsubsection{Smooth case} Consider a knot $K\subseteq \NS^3$ and an embedding $\gamma\in\K_{(p_0,v_0)}(K)$ that parametrizes $K$. The main reduction trick used by Budney \cite{Budney:knotsS3} and Hatcher \cite{Hatcher:Knots1,Hatcher:Knots2} to study the homotopy type of $\K_{(p_0,v_0)}(K)$ is to apply the smooth isotopy extension theorem to obtain a fibration defined by post-composition over the embedding $\gamma$
\begin{equation}\label{eq:LongEmbeddingsFibration} 
  D\hookrightarrow \Diff(\D^3)\rightarrow \K_{(p_0,v_0)}(K).
\end{equation}
Here, we are identifying $\Diff(\D^3)$ with the subgroup of diffeomorphisms of $\NS^3$ that are the identity near $p_0$ and the fiber is $D=\{\varphi\in\Diff(\D^3):\varphi\circ\gamma=\gamma\}$.  However, as explained in \cite{Budney:knotsS3}, we have

\begin{lemma}\label{lem:FramingKnots}
    The inclusion $\Diff(C(K))\hookrightarrow D$ is a homotopy equivalence.
\end{lemma}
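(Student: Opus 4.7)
The plan is to exhibit a Serre fibration with total space $D$, fiber $\Diff(C(K))$, and a weakly contractible base, and then deduce the statement from the long exact sequence of the fibration. This is the standard tubular-neighborhood reduction used by Hatcher and Budney in the long-knot setting.

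First I would fix, once and for all, a smooth tubular neighborhood embedding $\nu_0 \colon \D^2\times\NS^1 \hookrightarrow \D^3$ of $\gamma$ whose core is $\gamma$ and which agrees with a prescribed standard model near $p_0$. Let $\mathcal{T}_\gamma$ denote the space of smooth embeddings $\nu\colon \D^2\times\NS^1\hookrightarrow \D^3$ satisfying $\nu|_{\{0\}\times\NS^1}=\gamma$ and coinciding with $\nu_0$ on a fixed neighborhood of $\gamma^{-1}(p_0)$. Since $\varphi\in D$ fixes $\gamma$ pointwise and is the identity near $p_0$, the post-composition map
\[
\rho\colon D\longrightarrow \mathcal{T}_\gamma,\qquad \varphi\longmapsto \varphi\circ\nu_0,
\]
is well defined, and by the parametric smooth isotopy extension theorem it is a Serre fibration.

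Next I would identify the fiber over $\nu_0$ with $\Diff(C(K))$. A diffeomorphism $\varphi\in D$ lies in $\rho^{-1}(\nu_0)$ exactly when it restricts to the identity on $\mathrm{im}(\nu_0)=\Op(K)$. Restriction to the complement $C(K)=\D^3\setminus\Op(K)$ produces an element of $\Diff(C(K))$, which is identity near $\partial C(K)$; conversely any element of $\Diff(C(K))$ extends by the identity on $\Op(K)$ to a member of $\rho^{-1}(\nu_0)$. This identification is a homeomorphism, so the fiber is canonically $\Diff(C(K))$ and the inclusion in the statement realizes this identification.

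The main step, and the one I expect to be the key obstacle, is showing that the base $\mathcal{T}_\gamma$ is weakly contractible. The standard plan is to retract $\mathcal{T}_\gamma$ onto the space of framings of the normal bundle $N\gamma$ that coincide with a fixed framing near $p_0$ (linear interpolation on fiber discs handles the radial data), and then to contract this framing space using a Moser-type argument: any parametrized family of framings can be straightened to the reference one by a parametric family of fiberwise rotations supported away from $p_0$, with the a priori $\pi_1(SO(2))\cong\Z$ worth of obstructions being absorbed by diffeomorphisms of $\D^3$ that carry out a compensating meridional twist in a collar of $\partial\Op(K)$. This last point is the delicate ingredient and is the reason the lemma is non-formal; I would appeal to Budney's treatment in \cite{Budney:knotsS3} for the precise parametric construction. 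Once $\mathcal{T}_\gamma$ is known to be weakly contractible, the long exact sequence for $\rho$ gives the desired weak homotopy equivalence $\Diff(C(K))\simeq D$.
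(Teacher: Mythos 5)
Your overall architecture (the fibration $\rho\colon D\to\mathcal{T}_\gamma$ given by $\varphi\mapsto\varphi\circ\nu_0$, with fiber $\Diff(C(K))$) is the same reduction the paper carries out, but the key step is wrong as stated: the base $\mathcal{T}_\gamma$ is \emph{not} weakly contractible. After retracting to framings of the normal bundle rel the basepoint, $\mathcal{T}_\gamma$ is weakly equivalent to the based loop space $\Omega\GL^+(2,\R)\simeq\Omega\SO(2)$, whose set of components is $\pi_1(\SO(2))\cong\Z$ (each component being contractible). Your proposed mechanism for killing this $\Z$ --- ``compensating meridional twists'' --- does not exist. A meridional Dehn twist supported in a collar of $\partial\Op(K)$ inside $C(K)$ is an element of $\Diff(C(K))$ that does not touch $\nu_0$ at all, so it cannot move $\varphi\circ\nu_0$ between components of $\mathcal{T}_\gamma$. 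And a fiberwise rotation supported \emph{inside} the tube, equal to the identity near $\partial\Op(K)$ and rotating the fibers over the core through a loop of nonzero winding number, is not a well-defined diffeomorphism of $\NS^1\times\D^2$: the required interpolation $\rho(r)$ from winding number $1$ at the core to $0$ at the boundary would force $e^{2\pi i\rho(r)}=1$ for all $r$, which is impossible. Equivalently, any diffeomorphism of $\D^3$ fixing $K$ pointwise preserves linking numbers and hence the Seifert framing, so the nontrivial components of $\mathcal{T}_\gamma$ are simply never in the image of $\rho$.

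The repair is exactly the observation the paper makes: the loops of matrices $A^k_\theta\in\Omega\GL^+(2,\R)$ arising as vertical differentials of elements of $D$ all lie in the component of the constant loop $\Id$, \emph{because} diffeomorphisms preserve the Seifert framing; that single component is contractible, which is all one needs. In your setup this amounts to replacing $\mathcal{T}_\gamma$ by the path component of $\nu_0$, checking that $\rho$ surjects onto it (isotopy extension) and lands in it (Seifert framing), and then running the long exact sequence. The paper implements the same contraction more directly, via the conjugation $e^k_t=f_t^{-1}\circ e^k\circ f_t$ by vertical dilations to linearize the tubular-neighborhood data in families, rather than quoting uniqueness of tubular neighborhoods; but the substance is identical, and the Seifert-framing observation is the one non-formal input you cannot replace by a twisting diffeomorphism.
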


\begin{proof}
  Let $C$ be a compact CW complex and $G\subseteq C$ a subcomplex. Consider a family $\varphi^k\in D$, $k\in C$, such that $\varphi^k \in \Diff(C(K))$ for every $k\in G$. To prove the lemma, we will find a homotopy $\varphi^k_t\in D$, $t\in[0,1]$, such that $\varphi^k_0=\varphi^k$, for $k\in K$; $\varphi^k_1\in\Diff(C(K))$, for $k\in C$; and $\varphi^k_t=\varphi^k$, for $k\in G$. 

  Fix a trivialization of the normal bundle of $K$ and identify $\Op(K)=\NS^1\times \R^2 \subset \NS^3$. Since $\phi^k$ fixes $\gamma$, we may find $\varepsilon>0$ such that $\varphi^k(\NS^1\times \D^2(\varepsilon))\subseteq \NS^1\times\R^2$ and $\varphi^k_{|\NS^1\times\D^2(\varepsilon)} = i$ for $k\in G$ where $i$ is the inclusion. We denote the restrictions $\varphi^k_{|\NS^1\times\D^2(\varepsilon)}$ by $e^k$. Notice that $$e^k:\NS^1\times\D^2(\varepsilon)\hookrightarrow \NS^1\times\R^2$$ is a family of embeddings such that 
  \begin{itemize}
    \item [(i)] $e^k(\theta,0)=(\theta,0)$ for every $k\in C$, 
    \item [(ii)] $e^k=i$ is the inclusion for every $k\in G$ and
    \item [(iii)] $e^k_{|\Op(\{0\})\times\D^2(\varepsilon)} = i$ is also the inclusion for  $k\in C$. 
  \end{itemize}

  According to the Isotopy Extension Theorem, it is enough to check that the family of embeddings $e^k$ is contractible in the space of embeddings satisfying the above three conditions. To do so, we introduce the vertical dilation 
  \[
    f_t:\NS^1\times\R^2\rightarrow \NS^1\times\R^2, \;(\theta, v)\mapsto (\theta, tv).
  \] 
  Consider the family 
  \[
    e^k_t=f_t^{-1}\circ e^k\circ f_t, t\in[0,1]. 
  \] 
  Here, we define $e^k_0(\theta, v) := \lim_{t\mapsto 0^+}e^k_t(\theta,v)$, which gives a homotopy of embeddings between $e^k_1 = e^k$ and the vertical differential along the $0$-section of $e_k$. It is an easy exercise that this limit is well-defined and, in particular, $e^k_0$ satisfies 
  \[ 
    e^k_0(\theta,v)=(\theta,A^k_{\theta}(v))
  \]
  for some loop of matrices $A^k_{\theta}\in \Omega\GL^+(2,\R)$, $k\in C$. By the condition (iii), $A^k_{\theta}$ lies in the based loop space of $\GL^+(2,\R)$, instead of the free loop space. Since every diffeomorphism preserves the Seifert framing of a knot, $A^k_{\theta}$ lies in the component of $\Omega\GL^+(2,\R)$ that contains a constant loop $\Id$. Also, this component is contractible so we can homotope the family $A^k_\theta$ to the constant loop $\Id$ and this completes the proof.
\end{proof}

\begin{remark}\label{rmk:Framings}
  Observe that condition (iii) in the previous proof follows from the definition of long embeddings that we imposed at the beginning of the discussion. In particular, if we drop this condition or we work with multiple components links, the lemma is not true since non-trivial meridional rotations along the normal bundle could appear. However, we will see in Lemma \ref{lem:FramingLegendrians} that this is not the case for Legendrian embeddings. The reason is that a contactomorphism should preserve the contact plane so it could not rotate along the normal bundle of a Legendrian.
\end{remark}

Since $\Diff(\D^3)$ is contractible by Hatcher \cite{Hatcher:Smale}, we deduce from (\ref{eq:LongEmbeddingsFibration}) and Lemma \ref{lem:FramingKnots} the following:

\begin{lemma}\label{lem:KnotVSDiff}
  There is a homotopy equivalence $$\Omega\K_{(p_0,v_0)}(K)\cong\Diff(C(K)).$$ \qed
\end{lemma}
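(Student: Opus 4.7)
The plan is to extract the homotopy equivalence directly from the fibration (\ref{eq:LongEmbeddingsFibration}) combined with Lemma~\ref{lem:FramingKnots} and Hatcher's contractibility result for $\Diff(\D^3)$.

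More precisely, I would proceed in three short steps. First, recall that (\ref{eq:LongEmbeddingsFibration}) is a fibration $D \hookrightarrow \Diff(\D^3) \to \K_{(p_0,v_0)}(K)$ obtained by post-composition with the long embedding $\gamma$. Second, invoke Hatcher's theorem \cite{Hatcher:Smale} asserting that $\Diff(\D^3)$ is contractible, so in any Serre fibration with contractible total space the fiber is weakly homotopy equivalent to the based loop space of the base; this yields a weak homotopy equivalence $D \simeq \Omega \K_{(p_0,v_0)}(K)$. Third, apply Lemma~\ref{lem:FramingKnots} to replace $D$ with $\Diff(C(K))$ up to homotopy equivalence, producing the desired equivalence $\Omega \K_{(p_0,v_0)}(K) \simeq \Diff(C(K))$.

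There is no real obstacle here: all of the work has already been done in Lemma~\ref{lem:FramingKnots} (which was the genuine technical content, handling the framing issue using the vertical dilation trick and the contractibility of the identity component of $\GL^+(2,\R)$) and in Hatcher's Smale conjecture. The conventions of the paper (Whitehead's theorem applies, and ``homotopy equivalence'' is interpreted as ``weak homotopy equivalence'') ensure that the weak equivalence produced by the fibration argument is strong enough for the stated conclusion. The only point requiring a brief justification is the standard fact that in the long exact sequence of homotopy groups of the fibration $D \to \Diff(\D^3) \to \K_{(p_0,v_0)}(K)$, the contractibility of the middle term forces the connecting homomorphism $\pi_{k+1}(\K_{(p_0,v_0)}(K)) \to \pi_k(D)$ to be an isomorphism, which in turn can be upgraded to an actual map $D \to \Omega \K_{(p_0,v_0)}(K)$ (a based holonomy map) realizing the weak equivalence.
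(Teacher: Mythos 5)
Your argument is exactly the paper's: the lemma is stated with an immediate \qed, deduced from the fibration (\ref{eq:LongEmbeddingsFibration}), the contractibility of $\Diff(\D^3)$ due to Hatcher, and Lemma~\ref{lem:FramingKnots} identifying the fiber $D$ with $\Diff(C(K))$ up to homotopy. Your three steps reproduce this reasoning faithfully, including the correct observation that all the technical content lives in Lemma~\ref{lem:FramingKnots}.
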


\subsubsection{Contact case} We follow the same scheme in the contact setting. Let $L\subseteq (\NS^3,\xi_\std)$ be a Legendrian link and $\gamma\in\L(L)$ a parametrization of the link. As in the knot case (\ref{eq:LongHomotopyEquivalence}) there is a homotopy equivalence
\begin{equation}\label{eq:HomotopyEquivalenceLongLinks}
  \L(L)\cong \U(2)\times \L_{(p_0,v_0)}(L)
\end{equation}
where $\L_{(p_0,v_0)}(L)=\{\gamma=(\gamma_1,\ldots,\gamma_n)\in\L(L): (\gamma_1(0),\gamma_1'(0))=(p_0,v_0) \}.$

As in the smooth case there is a fibration 
\begin{equation}\label{eq:LongLegendrianEmbeddingsFibration} 
  C\hookrightarrow  \Cont(\D^3,\xi_\std)\rightarrow \L_{(p_0,v_0)}(L)
\end{equation}
with fiber $C=\{\varphi\in\Cont(\D^3,\xi_\std):\varphi\circ\gamma=\gamma\}$. Again, we consider $\Cont(\D^3,\xi_\std)$ as the subgroup of contactomorphisms of $(\NS^3, \xi_\std)$ that are the identity near $p_0$. The main difference with the smooth case is that the following lemma works for multiple component links and regular (not long) Legendrian embeddings:

\begin{lemma}\label{lem:FramingLegendrians}
  The inclusion $\Cont(C(L),\xi_\std)\hookrightarrow C$ is a homotopy equivalence.
\end{lemma}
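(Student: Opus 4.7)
The argument follows the scheme of Lemma \ref{lem:FramingKnots}, with the dilation trick adapted to the contact setting. Fix a compact family $\varphi^k \in C$, $k \in K$, with $\varphi^k \in \Cont(C(L), \xi_\std)$ on a subcomplex $G \subseteq K$; the plan is to produce a homotopy within $C$, relative to $G$, pushing $\varphi^k$ into $\Cont(C(L), \xi_\std)$. Since $\varphi^k \circ \gamma = \gamma$ for all $k$, for $\varepsilon > 0$ small enough the restrictions yield a compact family of contact embeddings $e^k \colon \Op_\varepsilon(L) \hookrightarrow \Op(L)$ that fix $L$ pointwise, agree with the inclusion on a fixed neighborhood of $p_0$, and agree with the inclusion globally for $k \in G$. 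By the parametric contact isotopy extension theorem, it suffices to contract this family to the inclusion within the space of contact embeddings subject to the same constraints.

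On each component $J^1(\NS^1) \subseteq \Op(L)$ with coordinates $(\theta, y, z)$ and contact form $\alpha = dz - y\, d\theta$, consider the contact dilation $f_t(\theta, y, z) = (\theta, ty, tz)$, which satisfies $f_t^*\alpha = t\alpha$. Then $e^k_t := f_t^{-1} \circ e^k \circ f_t$ is a contact embedding for $t \in (0, 1]$. Writing $e^k$ in coordinates and Taylor expanding along the zero section, one reads off that as $t \to 0^+$, $e^k_t$ converges smoothly to a contactomorphism $\Phi^k$ fixing $L$. Imposing the contact condition $(\Phi^k)^*\alpha = f^k \alpha$ on this linear-in-$(y, z)$ map forces the normal form
\[
  \Phi^k(\theta, y, z) = \bigl(\theta,\; g^k(\theta)\, y + (g^k)'(\theta)\, z,\; g^k(\theta)\, z\bigr),
\]
for a positive function $g^k \colon \NS^1 \to \R_{>0}$; indeed, a direct computation yields $(\Phi^k)^*\alpha = g^k \alpha$. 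This ``upper-triangular'' shape, with no rotational component in the $(y, z)$-plane, is precisely the contact-rigid analogue of the Seifert framing used in Lemma \ref{lem:FramingKnots}, and is the phenomenon highlighted in Remark \ref{rmk:Framings}.

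With $\Phi^k$ in this explicit form, the linearization space is parametrized by the contractible space of positive functions on $\NS^1$. The straight line $g^k_s(\theta) := (1-s) g^k(\theta) + s$ stays positive, remains equal to $1$ near $p_0$, and is identically $1$ for $k \in G$; the corresponding family $\Phi^k_s$ interpolates from $\Phi^k$ at $s = 0$ to the inclusion at $s = 1$ through contact embeddings satisfying all constraints. Concatenating the dilation homotopy with this linear contraction produces the desired deformation $h^k_s \colon \Op_\varepsilon(L) \hookrightarrow \Op(L)$ of $e^k$ to the inclusion.

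Finally, the parametric contact isotopy extension theorem lifts $h^k_s$ to an ambient family $\Psi^k_s \in \Cont(\NS^3, \xi_\std)$ with $\Psi^k_0 = \Id$, $\Psi^k_s = \Id$ for $k \in G$, and $\Psi^k_s|_{\Op_\varepsilon(L)} = h^k_s$. Then $\varphi^k_s := \varphi^k \circ (\Psi^k_s)^{-1}$ is the required homotopy in $C$: it starts at $\varphi^k$, remains constant for $k \in G$, fixes $\gamma$ throughout, and at $s = 1$ equals the identity on $\Op_\varepsilon(L)$, hence lies in $\Cont(C(L), \xi_\std)$. The essential new content beyond Lemma \ref{lem:FramingKnots} is the identification of the linearization space as contractible via the contact upper-triangular normal form; once that is secured, the extension argument is routine.
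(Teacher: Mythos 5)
Your proposal is correct and follows essentially the same route as the paper's proof: reduce to a compact family of contact embeddings of a standard neighborhood fixing $L$, apply the fiberwise contact dilation $f_t(\theta,y,z)=(\theta,ty,tz)$ to linearize, derive the same upper-triangular normal form $(\theta,\lambda(\theta)y+\lambda'(\theta)z,\lambda(\theta)z)$ from the contact condition, and contract via the affine homotopy $\lambda_s=(1-s)\lambda+s$ before invoking contact isotopy extension. Your observation that the vanishing of the rotational component is the contact replacement for the Seifert-framing step of Lemma \ref{lem:FramingKnots} is exactly the point made in Remark \ref{rmk:Framings}.
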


\begin{proof}
  A similar result appears in \cite{FMP:satellite}. However, we will provide a complete proof here for the sake of completeness. The proof is essentially the same as the one given in Lemma \ref{lem:FramingKnots}. The reader will see that, as mentioned in Remark \ref{rmk:Framings}, we will not appeal to condition (iii) in the proof of Lemma \ref{lem:FramingLegendrians}. Thus, our proof will readily apply for general Legendrian embeddings and Legendrian link embeddings. Therefore, we assume by simplicity that $L$ is connected, as the argument remains the same as in the non-connected case.  We take coordinates around $L$ given by $(J^1\NS^1=\NS^1\times\R^2, \xi_\std=\ker(dz-yd\theta))$. We use the same notation as in the proof of Lemma \ref{lem:FramingLegendrians}. The statement is reduced to proving that every family of contact embeddings 
  \[ 
    e_k: (\NS^1\times\D^2(\varepsilon),\xi_\std)\hookrightarrow (J^1\NS^1,\xi_\std),\; k\in K,
  \]
  satisfying conditions (i), (ii) and (iii) can be isotoped to the inclusion through embeddings that satisfy the same conditions. As mentioned above, condition (iii) will not be used. Note that the dilation $f_t:\NS^1\times\R^2\rightarrow\NS^1\times\R^2$,\; $t>0$, is a contactomorphism of $(J^1\NS^1,\xi_\std)$.  Hence, we apply the same argument as in Lemma \ref{lem:FramingKnots} in this case to homotope the family of contact embeddings $e^k$ into a new family of contact embeddings $e^k_0(\theta,v)=(\theta, A^k_\theta(v))$. Write $v=(y,z)$ and the entries of the matrix $A^k_\theta=(a^k_{i,j}(\theta))_{1\leq i,j\leq 2}$. The contact embedding condition reads as
  \[ 
    (e^k_0)^*(dz-yd\theta)=\lambda^k(dz-yd\theta) 
  \]
  for some family of smooth functions $\lambda^k:\NS^1\times\D^2(\varepsilon)\rightarrow \R_+$. From here we find that $a^k_{2,2}(\theta)=a^k_{1,1}(\theta)=\lambda^k(\theta,y,z)=\lambda^k(\theta)>0$, $a^k_{1,2}(\theta)=(\lambda^k)'(\theta)$ and $a^k_{2,1}(\theta)=0$. In particular, $\lambda^k(\theta,y,z)=\lambda^k(\theta)$ and the contact embedding $e^k_0$ has the form $$e^k_0(\theta,y,z)=(\theta,\lambda^k(\theta)y+(\lambda^k)'(\theta)z,\lambda^k(\theta)z).$$  Observe that there is further homotopy of contact embeddings 
  \[ 
    e^k_{0,s}(\theta,y,z)=(\theta, \lambda^k_s(\theta)y+(\lambda^k_s)'(\theta)z,\lambda^k_s(\theta)z), s\in[0,1],
  \] 
  where $\lambda^k_s=(1-s)\lambda^k+ s$; joining $e^k_{0,0}=e^k_0$ with the inclusion $e^k_{0,1}=i$. This concludes the argument. 
\end{proof}

Since the total space $\Cont(\D^3,\xi_\std)$ is contractible by Eliashberg--Mishachev \cite{EliashbergMishachev:tightS3}, we also conclude that 

\begin{lemma} \label{lem:LegVSCont}
  There is a homotopy equivalence $$\Omega\L_{(p_0,v_0)}(L)\cong\Cont(C(L),\xi_\std).$$ \qed
\end{lemma}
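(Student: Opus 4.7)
The plan is to mirror the argument used in the smooth setting (Lemma \ref{lem:KnotVSDiff}), now in the contact category, assembling the two structural pieces already established in the subsection.

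First, I would start from the fibration (\ref{eq:LongLegendrianEmbeddingsFibration}), namely
\[
  C \hookrightarrow \Cont(\D^3,\xi_\std) \longrightarrow \L_{(p_0,v_0)}(L),
\]
obtained by evaluating a contactomorphism of $(\D^3,\xi_\std)$ (viewed as the subgroup of $\Cont(\NS^3,\xi_\std)$ fixing a neighborhood of $p_0$) at the fixed parametrization $\gamma \in \L_{(p_0,v_0)}(L)$. The fiber $C$ is the stabilizer of $\gamma$, that is, the subgroup of contactomorphisms satisfying $\varphi\circ\gamma=\gamma$. That this evaluation map is a Serre fibration is the standard consequence of the parametric contact isotopy extension theorem.

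Second, I would invoke the Eliashberg--Mishachev theorem \cite{EliashbergMishachev:tightS3}, which guarantees that the total space $\Cont(\D^3,\xi_\std)$ is contractible. Feeding this into the long exact sequence of homotopy groups of the above fibration, one obtains a weak homotopy equivalence
\[
  C \simeq \Omega\L_{(p_0,v_0)}(L),
\]
where the basepoint of the loop space is taken to be $\gamma$.

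Finally, I would combine this with Lemma \ref{lem:FramingLegendrians}, which already provides the weak homotopy equivalence $\Cont(C(L),\xi_\std) \hookrightarrow C$. Composing the two equivalences yields
\[
  \Omega\L_{(p_0,v_0)}(L) \;\simeq\; C \;\simeq\; \Cont(C(L),\xi_\std),
\]
as claimed. There is essentially no obstacle here: both nontrivial inputs (the contractibility of $\Cont(\D^3,\xi_\std)$ and the framing lemma) are in place, and the only mild subtlety is making sure that the subgroup of $\Cont(\NS^3,\xi_\std)$ fixing a neighborhood of $p_0$ can legitimately be identified with $\Cont(\D^3,\xi_\std)$, which is immediate from the Darboux model together with the fact that a contactomorphism germ at a point is contractible once the $1$-jet is fixed.
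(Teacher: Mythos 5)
Your argument is exactly the one the paper intends: the fibration (\ref{eq:LongLegendrianEmbeddingsFibration}) with contractible total space $\Cont(\D^3,\xi_\std)$ (Eliashberg--Mishachev) gives $C\simeq\Omega\L_{(p_0,v_0)}(L)$ via the long exact sequence, and Lemma~\ref{lem:FramingLegendrians} identifies $C$ with $\Cont(C(L),\xi_\std)$ up to homotopy. This matches the paper's (implicit) proof step for step, so nothing further is needed.
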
 

\subsection{From Legendrians in \texorpdfstring{$(\D^3,\xi_\std)$}{S3} to Legendrians in \texorpdfstring{$(M,\xi)$}{M}.}\label{subsec:LegendriansGeneralManifold}
Let $(M,\xi)$ be a tight contact $3$-manifold. Let $e:(\D^3,\xi_\std)\hookrightarrow (M,\xi)$ be a contact  embedding of a Darboux ball and $\gamma\in\L(L,(\D^3,\xi_\std))$ a Legendrian embedding. We will regard $\gamma$ as a Legendrian in $(M,\xi)$ by making the identification $\gamma=e\circ \gamma\in \L(L,(M,\xi))$. Note that the isotopy class of $\gamma$ as a Legendrian in $\L(L,(M,\xi))$ does not depend on the contact embedding $e$ since the space of Darboux balls is connected. 

Recall that $\L^{\gamma}(L,(M,\xi))$ is the space of Legendrian embeddings that realize the smooth link type $L$ and are Legendrian isotopic to $\gamma$. The contact embedding $e:\D^3\hookrightarrow (M,\xi)$ induces continuous operators \[l_e:\L^\gamma(L,(\D^3,\xi_\std))\rightarrow\L^\gamma(L,(M,\xi)) \,\text{ and }\,  k_e:\K(L,\D^3)\hookrightarrow \K(L,M) \] that fit in a commutative diagram 
\begin{displaymath} 
  \xymatrix@M=10pt{
  \L^\gamma(L,(\D^3,\xi_\std)) \ar[r]^{l_e} \ar@{^{(}->}[d]^{i_0} & \L^\gamma(L,(M,\xi))  \ar@{^{(}->}[d]^{i_1} \\
  \K(L,\D^3) \ar[r]^{k_e} &  \K(L,M) }
\end{displaymath}
where $i_0$ and $i_1$ are the natural inclusions. In particular, $e$ induces a map 
\[
  j:\operatorname{Hofiber}(i_0)\rightarrow \operatorname{Hofiber}(i_1)
\]
between the homotopy fibers of these inclusions.

\begin{proposition}\label{prop:FromS3ToM3}
  The map $j:\operatorname{Hofiber}(i_0)\rightarrow \operatorname{Hofiber}(i_1)$ is a homotopy equivalence. 
\end{proposition}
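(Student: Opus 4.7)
The plan is to prove that the commutative square displayed just before the statement is a homotopy pullback, which is equivalent to $j$ being a weak homotopy equivalence between the vertical homotopy fibers. I will verify the homotopy pullback property by a direct relative lifting argument over compact CW pairs, constructing the necessary lifts by combining smooth isotopy extension with the contractibility of a certain space of Darboux balls containing a prescribed Legendrian.

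Unpacking definitions, a point of $\operatorname{Hofiber}(i_1)$ over a basepoint $\gamma_* \in \K(L,\D^3)$ is a pair $(\tilde\gamma,\eta)$ with $\tilde\gamma \in \L^\gamma(L,(M,\xi))$ and $\eta\colon[0,1] \to \K(L,M)$ a path from $k_e(\gamma_*)$ to $i_1(\tilde\gamma)$. The first step is to promote $\eta$, via the smooth isotopy extension theorem, to an ambient smooth isotopy $\Phi_t \in \Diff(M)$ with $\Phi_0 = \Id$, so that $\tilde\gamma(L) \subseteq \Phi_1(e(\D^3))$. The key contact-topological input is then the weak contractibility of the space of contact embeddings $(\D^3,\xi_\std) \hookrightarrow (M,\xi)$ whose image contains a prescribed Legendrian in the isotopy class of $\gamma$; this follows from the parametric standard neighborhood theorem combined with Theorem~\ref{thm:StandardTightHandlebody}, i.e.\ the Eliashberg--Mishachev contractibility of $\Cont(\D^3,\xi_\std)$. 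Using this contractibility, I will replace the smoothly embedded ball $\Phi_1(e(\D^3))$ by a genuine Darboux ball $D \subset (M,\xi)$ that still contains $\tilde\gamma(L)$ and is contact isotopic to $e(\D^3)$ inside $(M,\xi)$. Pulling $\tilde\gamma$ back through $e$ then yields a Legendrian $\tilde\gamma_0 \in \L^\gamma(L,(\D^3,\xi_\std))$ and a smooth path $\eta_0 \in \K(L,\D^3)$ from $\gamma_*$ to $i_0(\tilde\gamma_0)$ such that $j(\tilde\gamma_0,\eta_0)$ is homotopic to $(\tilde\gamma,\eta)$ in $\operatorname{Hofiber}(i_1)$, giving a surjection on homotopy groups.

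The main obstacle is carrying out this construction parametrically: given a compact CW pair $(K,G)$, a family $\{(\tilde\gamma^k,\eta^k)\}_{k\in K}$ in $\operatorname{Hofiber}(i_1)$, and a prescribed lift $\{(\tilde\gamma^k_0,\eta^k_0)\}_{k\in G}$ in $\operatorname{Hofiber}(i_0)$, I need to extend the lift continuously to all of $K$. The parametric contractibility of the space of Darboux balls containing a prescribed Legendrian is exactly what enables such an extension; carrying this out while using Gray stability to convert the parametric smooth ambient isotopies $\Phi^k_t$ into parametric contact ambient isotopies that are compatible with the lifts already specified over $G$ is the most delicate part of the argument. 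Injectivity (i.e., uniqueness of the lift up to homotopy) follows by the same argument applied to the CW pair $(K\times[0,1], K\times\{0,1\}\cup G\times[0,1])$, using once more the fact that the parametric fibers of the forgetful map from the space of triples (Darboux ball, containment, contact isotopy to $e(\D^3)$) are contractible.
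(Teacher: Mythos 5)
Your overall skeleton --- drag the reference ball along the path by smooth isotopy extension, replace the resulting smooth ball by a genuine Darboux ball still containing the Legendrian, and pull back by a contactomorphism obtained from contact isotopy extension --- is the same as the paper's. The gap is the key lemma you invoke for the middle step: the space of contact embeddings $(\D^3,\xi_\std)\hookrightarrow(M,\xi)$ whose image contains a prescribed Legendrian is \emph{not} weakly contractible, and it does not follow from the parametric standard neighborhood theorem together with Theorem~\ref{thm:StandardTightHandlebody}. A Darboux ball in a tight manifold is determined up to contractible choice by its boundary standard convex sphere (Eliashberg--Mishachev), a smooth ball by its boundary smooth sphere (Hatcher), and by Theorem~\ref{thm:StandardSpheres} the homotopy fiber of standard spheres inside smooth spheres is $\NS^2$; hence the homotopy fiber of $\{\text{Darboux balls}\}\hookrightarrow\{\text{smooth balls}\}$ is $\NS^2$, not a point, and restricting to balls containing $\Lambda$ does not kill it. Nor do your cited inputs apply: the complement of $\Op(\Lambda)$ inside the ball is a knot complement, not a standard handlebody, so Theorem~\ref{thm:StandardTightHandlebody} says nothing about it, and the standard neighborhood theorem controls only $\Op(\Lambda)$, not a large ball around it. In particular your injectivity step, which explicitly appeals to contractibility of these parametric fibers, does not go through as written.

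The repair --- and what the paper actually does --- is to observe that contractibility of the fiber is not needed. The lifting problems arising from $\operatorname{Hofiber}(i_0)\rightarrow\operatorname{Hofiber}(i_1)$ are posed over pairs $(\D^k\times[0,1],\ \D^k\times\{0\}\cup\partial\D^k\times[0,1])$, in which the subspace is a deformation retract of the total space; sections of \emph{any} fibration extend over such pairs, so an $\NS^2$ homotopy fiber is harmless. Concretely, the paper standardizes only the boundary spheres $s^{z,u}=e^{z,u}|_{\partial\D^3}$ of the dragged balls using Theorem~\ref{thm:StandardSpheres}, keeping the standardizing homotopy $C^0$-small so that the spheres stay disjoint from the Legendrians (a point your sketch also elides: the replacement Darboux ball must still contain $\tilde\gamma(L)$, and this $C^0$-control comes from the microfibration proof, not for free), and then applies the contact isotopy extension theorem to the resulting family of standard spheres. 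If you restate your key input as an extension property over these deformation-retract pairs, rather than as contractibility of a space of Darboux balls, your argument becomes the paper's.
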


\begin{remark}
  The same construction can be made for long Legendrians (in the connected case) and the induced map between the homotopy fibers is also a homotopy equivalence. The same applies for Legendrians and formal Legendrians. The proof of these facts are analogous to the one that we give below.
\end{remark}

\begin{proof}
  Note that we may identify $\operatorname{Hofiber}(i_0)=\{\gamma^u\in \K(L,\D^3), u\in[0,1]:\gamma^0=\gamma$ and $\gamma^1\in \L^\gamma(L,(\D^3,\xi_\std))\}$ and $\operatorname{Hofiber}(i_1)=\{\gamma^u\in \K(L,M), u\in[0,1]:\gamma^0=\gamma$ and $\gamma^1\in \L^\gamma(L,(M,\xi))\}$; so the map $j:\operatorname{Hofiber}(i_0)\rightarrow \operatorname{Hofiber}(i_1)$ is defined as $j(\gamma^u)=e\circ \gamma^u$.

  It is enough to prove that given a continuous family $\gamma^{z,u}\in \operatorname{Hofiber}(i_1)$, $z\in\D^k$, such that $\gamma^{z,u}\in j(\operatorname{Hofiber}(i_0))$ for $z\in\partial \D^k$; there exists a homotopy $\gamma^{z,u}_t\in\operatorname{Hofiber}(i_1)$, $(z,t)\in\D^k\times[0,1]$, such that 
  \begin{itemize}
    \item [(i)] $\gamma^{z,u}_1\in j(\operatorname{Hofiber}(i_1))$, $z\in\D^k$, and 
    \item [(ii)] $\gamma^{z,u}_t=\gamma^{z,u}$ for $(z,u,t)\in \D^k\times\{0\}\times[0,1]\cup \partial\D^k\times[0,1]\times[0,1]$. 
  \end{itemize}
  Here we are assuming that $\D^0$ is a single point so $\partial \D^0=\emptyset$. 

  By the hypothesis, $\image(\gamma^{z,u})\subseteq e(\D^3)$ for $(z,u)\in\D^k\times\{0\}\cup\partial \D^k\times[0,1]$. Therefore, by the smooth isotopy extension theorem we may find a family of smooth embeddings $e^{z,u}:\D^3\rightarrow M$, $(z,u)\in\D^k\times[0,1]$, such that 

  \begin{itemize}
    \item $\image(\gamma^{z,u})\subseteq e^{z,u}(\D^3)$ for every $(z,u)\in\D^k\times[0,1]$, 
    \item $e^{z,u}=e$ for $(z,u)\in\D^k\times\{0\}\cup\partial \D^k\times[0,1].$
  \end{itemize}

  Denote the restrictions of these balls to their boundary by $s^{z,u}=e^{z,u}_{|\partial\D^3}:\NS^2\rightarrow M$. Note that $s^{z,u}$ is a standard embedding for $(z,u)\in\D^k\times\{0\}\cup \partial\D^k\times[0,1]$. It follows from Theorem \ref{thm:StandardSpheres} (or by its proof given in \cite{FMP:unknot}) that there exists a homotopy of sphere embeddings $s^{z,u}_t:\NS^2\rightarrow M$, $(z,u,t)\in\D^k\times[0,1]\times[0,1]$, such that 

  \begin{itemize}
    \item $s^{z,u}_0=s^{z,u}$, for $(z,u)\in\D^k\times[0,1]$,
    \item $s^{z,u}_t=s^{z,u}=e_{|\partial\D^3}$, for $(z,u,t)\in\D^k\times\{0\}\times[0,1]\cup\partial\D^k\times[0,1]\times[0,1]$,
    \item $s^{z,u}_1$ is standard for $(z,u)\in\D^k\times[0,1]$; and 
    \item $\image(s^{z,u}_t)\subseteq \Op(\image(s^{z,u}_0))$ for $(z,u,t)\in\D^k\times[0,1]\times[0,1]$. In particular, $\image(s^{z,u}_t)\cap \image(\gamma^{z,u})=\emptyset$
  \end{itemize}

  In particular, it follows again from the smooth isotopy extension theorem that we may find a family of balls $\tilde{e}^{z,u}:\D^3\rightarrow M$, $(z,u)\in\D^k\times[0,1]$, such that 

  \begin{itemize}
    \item $\tilde{e}^{z,u}=e^{z,u}=e$, for $(z,u)\in\D^k\times\{0\}\cup\partial\D^k\times[0,1],$
    \item $\tilde{e}^{z,u}_{|\partial\D^3}=s^{z,u}_1$ is standard for $(z,u)\in\D^k\times[0,1]$, and 
    \item $\image(\gamma^{z,u}(\NS^1))\subseteq \tilde{e^{z,u}}(\D^3)$ for $(z,u)\in \D^k\times[0,1]$.
  \end{itemize}

  To conclude apply the contact isotopy extension theorem to the disk of standard sphere embeddings $s^{z,u}_1$ to find a family of contactomorphisms $\varphi^{z,u}\in\Cont(M,\xi)$, $(z,u)\in\D^k\times[0,1]$, such that 

  \begin{itemize}
    \item [(a)] $\varphi^{z,u}=\Id$, for $(z,u)\in\D^k\times\{0\}\cup \partial\D^k\times[0,1]$, and 
    \item [(b)] $\varphi^{z,u}\circ s^{z,0}=s^{z,u}$, for $(z,u)\in\D^k\times[0,1]$. 
  \end{itemize}

  In particular, it follows from condition (b) that $(\varphi^{z,u})^{-1}(\tilde{e}^{z,u}(\D^3))=e(\D^3)$ and, therefore, $(\varphi^{z,u})^{-1}\circ \gamma^{z,u}\in j(\operatorname{Hofiber}(i_0))$. We conclude that the homotopy 
  \[ 
    \gamma^{z,u}_t=(\varphi^{z,tu})^{-1}\circ \gamma^{z,u}, (z,u,t)\in\D^k\times[0,1]\times[0,1],
  \] 
  satisfies the required properties (i) and (ii) stated above (property (i) follows from (a)). This concludes the proof.
\end{proof}

\subsection{The \texorpdfstring{$\pi_2$}{pi2}-invariant for loops of (formal) Legendrians}\label{subsec:Pi2Invariant}
Here, we briefly review the $\pi_2$-invariant for loops of Legendrians introduced in \cite{FMP:Legendrian}.

\subsubsection{The $\pi_2$-invariant for loops of smooth long embeddings and torus knots} Let $\gamma^\theta\in \K_{(p_0,v_0)}(K)$, $\theta \in\NS^1$, be a loop of long smooth embeddings. Assume further that $\gamma^\theta$ is contractible as a loop inside  $\K(K)$. Then, $\gamma^\theta$ is contractible as a loop of long embeddings if and only if an obstruction class 
\begin{equation}\label{eq:Pi2Invariant} 
  \pi_2(\gamma^\theta)\in \Z_{m_K}
\end{equation}
vanishes. We call this obstruction class the $\pi_2$-invariant of the loop $\gamma^\theta$. 

Indeed, this follows by analyzing the long exact sequence in homotopy associated to the fibration (\ref{eq:LongEmbeddingsVSEmbeddings}). The relevant part for us is
\[ 
  \cdots \rightarrow \pi_2(\K(K))\rightarrow \pi_2( \NS(T\NS^3))\rightarrow \pi_1(\K_{(p_0,v_0)}(K))\rightarrow \pi_1(\K(K))\rightarrow\ldots 
\] 

Fix the global trivialization of $T\NS^3$ induced by the quaternion structure on $\R^4$. We have a well defined defined map 
\begin{equation}\label{eq:DerivativeMap} 
  D_0:\K(K)\rightarrow \NS^2, \gamma\mapsto \gamma'(0). 
\end{equation}
The non-negative integer $m_K$ is determined by the relation 
\[ 
  m_K\Z=\image \pi_2(D_0)<\pi_2(\NS^2)\cong\pi_2(\NS^3)\oplus\pi_2(\NS^2)\cong\pi_2(\NS(T\NS^3)).
\]

We now describe how to compute this obstruction class. Since $\gamma^\theta$ is contractible as a loop of smooth embeddings there exists a disk of embeddings $\gamma^{\theta,r}\in \K(K)$, $(\theta,r)\in\D^2$, bounding it, i.e. $\gamma^{\theta,1}=\gamma^\theta$. The map $D_0\gamma^{\theta,r}:\D^2\rightarrow \NS^2,(\theta,r)\mapsto D_0(\gamma^{\theta,r})$ descends to a map $D_0\gamma^{\theta,r}:\NS^2\rightarrow\NS^2$ because $\gamma^\theta$ is a loop of long smooth embeddings. The $\pi_2$-invariant of $\gamma^\theta$ is given by 
\[
  \pi_2(\gamma^\theta)=\deg(D_0\gamma^{\theta,r})\in\Z/m_K\Z.
\]
The only non unique, up to homotopy, choice in this process is the election of the capping disk of smooth embeddings. That is the reason that we should quotient out by $\image \pi_2(D_0)$ to obtain a homotopy invariant.

\subsubsection{The $\pi_2$-invariant for torus knots.} For torus knots it is easy to compute the integer $m_{T_{p,q}}$ from the work of Hatcher \cite{Hatcher:Knots1,Hatcher:Knots2}. See also \cite{Budney:knotsS3}.
\begin{theorem}[Hatcher]\label{thm:HatcherTorusKnots}
  For every pair of relatively prime integers $1<p<|q|$ the following holds:
  \begin{itemize}
    \item [(i)] The natural map $\SO(4)\rightarrow \K(T_{p,q}), \,A\mapsto A\circ \gamma;$ defined by postcomposition with a fixed embedding $\gamma$ is a homotopy equivalence.
    \item [(ii)] The $\pi_2$-invariant is $\Z$-valued, i.e. $m_{T_{p,q}}=0$.
    \item [(iii)] The map $\SO(2)\rightarrow \K_{(p_0,v_0)}(T_{p,q}),\,A_\theta\mapsto \gamma;$ defined by postcomposition of a fixed long embedding $\gamma$ with a rotation $A_\theta$, $\theta\in\NS^1$, along the $x$-axis in $\R^3$ is a homotopy equivalence. 
    \item [(iv)] The homomorphism $\Z\hookrightarrow \Diff(C(T_{np,nq})),\,k\mapsto \varphi^k$; where $\varphi$ is a meridional annular twist along the boundary of $C(T_{np,nq})$, is a homotopy equivalence.
  \end{itemize}  
\end{theorem}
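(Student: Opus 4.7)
My plan is to establish the four parts in the order (iv) $\Rightarrow$ (iii) $\Rightarrow$ (i) $\Rightarrow$ (ii). Part (iv) carries the main technical weight; the remaining three will be formal consequences of (iv) combined with Hatcher's Smale conjecture and the fibrations from Section \ref{subsec:LegendriansContactomorphisms}.

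\textbf{Proving (iv).} The complement $C(T_{np,nq})$ carries a natural Seifert fibration inherited from the genus-one Heegaard splitting of $\NS^3$: the base is the $n$-holed $2$-sphere with two marked points of multiplicities $p$ and $q$, and the $n$ boundary components of $C(T_{np,nq})$ are fibered tori. I would invoke Hatcher's theorem on diffeomorphism groups of Haken Seifert fibered $3$-manifolds, which asserts that the inclusion of the fiber-preserving diffeomorphism subgroup into $\Diff$ is a homotopy equivalence when the base has enough complexity. Because our convention requires diffeomorphisms to be the identity near the boundary, the $\NS^1$-action rotating the regular fibers is excluded (it moves the boundary tori), and base diffeomorphisms that act non-trivially on the boundary circles of the base are also killed. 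After this reduction, the only surviving contributions come from vertical Dehn twists along annuli whose boundaries are meridians on $\partial C(T_{np,nq})$. Relations coming from the fibered structure --- in particular, that a full rotation along a regular fiber can be expressed as a product of such meridional twists --- then collapse all such twists to powers of the single meridional annular twist $\varphi$, giving $\Diff(C(T_{np,nq})) \simeq \Z$. This step is the principal technical obstacle, as it requires careful bookkeeping of how the ``identity near the boundary'' convention constrains the fiber-preserving description and of the relations among vertical annular twists.

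\textbf{Deriving (iii), (i), and (ii).} For (iii), Lemma \ref{lem:KnotVSDiff} provides the homotopy equivalence $\Omega\K_{(p_0,v_0)}(T_{p,q}) \simeq \Diff(C(T_{p,q}))$, and combining this with (iv) applied to $n = 1$ yields $\K_{(p_0,v_0)}(T_{p,q}) \simeq K(\Z, 1) \simeq \NS^1$. To identify an explicit generator, I would observe that the $\SO(2)$-loop of rotations about the $x$-axis induces a loop in $\K_{(p_0,v_0)}(T_{p,q})$ whose monodromy on $C(T_{p,q})$ is precisely the meridional annular Dehn twist $\varphi$ from (iv). For (i), compare the fibrations
\[
  \SO(2) \hookrightarrow \SO(4) \to \SO(4)/\SO(2) = \NS(T\NS^3)
  \quad \text{and} \quad
  \K_{(p_0,v_0)}(T_{p,q}) \hookrightarrow \K(T_{p,q}) \to \NS(T\NS^3),
\]
noting that the natural map $\SO(4) \to \K(T_{p,q})$ is compatible with both projections and restricts on the fiber to the $\SO(2)$-loop of (iii). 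Since the induced map on fibers is a homotopy equivalence by (iii) and on the base is the identity, the five-lemma applied to the long exact sequences in homotopy forces the total map to be a homotopy equivalence. Finally, (ii) is an immediate consequence of (i): $\pi_2(\SO(4)) = 0$ forces the image of $\pi_2(D_0)$ to be trivial, so $m_{T_{p,q}} = 0$.
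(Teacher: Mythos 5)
First, a point of comparison: the paper does not prove this theorem at all. It is stated as an attribution to Hatcher (with Budney's survey as a secondary reference), and the surrounding text (``it is easy to compute \dots from the work of Hatcher'') makes clear the authors import it as a black box. So there is no proof in the paper to measure yours against; I can only assess whether your argument stands on its own. Your reductions (iv) $\Rightarrow$ (iii) $\Rightarrow$ (i) $\Rightarrow$ (ii) are correct and follow the standard route: the delooping of Lemma~\ref{lem:KnotVSDiff}, the identification of the $\SO(2)$ (Gramain) loop with the meridional twist under the connecting map of the fibration (a fact the paper itself only asserts, in the remark following the theorem), the map of evaluation fibrations over $\NS(T\NS^3)=\SO(4)/\SO(2)$ together with the five lemma, and the vanishing of $\pi_2$ of a Lie group. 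Those three steps are fine.

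The gap is in (iv), which you correctly identify as carrying all the weight but then do not actually establish; moreover the sketch contains a misidentification that would derail the ``bookkeeping'' if followed literally. The meridional twist is \emph{not} a vertical (fiber-direction) twist: on $\partial C(T_{p,q})$ the regular fiber of the Seifert fibration is the curve of slope $pq$ in the Seifert framing, not the meridian, so ``the only surviving contributions come from vertical Dehn twists along annuli whose boundaries are meridians'' conflates two transverse directions on the boundary torus. Relatedly, the relation you invoke is backwards: damping the $\NS^1$-action near the boundary shows that the \emph{fiber-direction} collar twist is isotopic to the identity rel boundary; it is not ``a product of meridional twists'' (for $n=1$ there is a single meridional twist, which must have infinite order for the theorem to be true). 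The correct accounting is: collar twists along $\partial C(T_{p,q})$ give $\Z^2\cong H_1(T^2)$; the base orbifold mapping class group $\PB_2\cong\Z$ is generated by the boundary-parallel twist and lifts into this $\Z^2$; the fiber-direction twist dies by the $\NS^1$-action; and since the meridian and the fiber form a basis of $H_1(T^2)$, the quotient is $\Z$ generated by $\varphi$ --- with injectivity of $\Z\to\pi_0(\Diff(C(T_{p,q})))$ still requiring a separate argument (e.g.\ nontriviality of the Gramain loop). Finally, your sketch cannot work for $n>1$ as written (nor can the literal statement): for a link with $n>1$ parallel components the complement rel boundary admits braiding mapping classes and $n$ independent meridional twists, so $\Diff(C(T_{np,nq}))$ is not homotopy equivalent to $\Z$; the assertion is only used, and only true, in the knot case $n=1$.
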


The loop $A_\theta\gamma$, $\theta\in\NS^1$, of long embeddings described in (iii) is known as \em Gramain loop \em \cite{Gramain:Loop}. In view of the previous result this loop can be characterized as the unique loop $\beta^\theta\in\K_{(p_0,v_0)}(T_{p,q})$ such that 
\begin{equation}\label{eq:CharacterizationGramain}
  \pi_2(\beta^{2\theta})=1.
\end{equation}

\begin{remark}
  Gramain loop can be defined for every smooth long embedding $K$ and is a non-trivial element of the fundamental group of $\K_{(p_0,v_0)}(K)$ as long as $K$ is not the unknot (see \cite{Budney:knotsS3,Gramain:Loop}). The associated diffeomorphism in $\Diff(C(K))$ via the homotopy equivalence in Lemma \ref{lem:KnotVSDiff} is isotopic to a meridional annular twist along the boundary of $C(K)$. Note that a closed neighborhood of $\partial C(K)$ is $T^2\times[0,1]$ which has diffeomorphism group homotopy equivalent to $\Z\oplus \Z$, generated by meridional and longitudinal Dehn twists. See, for instance, ~\cite[Lemma~2.2]{HavensKoytcheff:knots}. When $K$ is Legendrian we may realize $(T^2\times[0,1])$ as a Legendrian fibration over an annulus $A$. In particular, the right handed Dehn twist in $A$ can be lifted to a contactomorphism $\Phi\in \Cont(T^2\times I,\xi)$  that we call a \em contact torus twist \em along $\partial C(K)$. This contactomorphism lies in the class $(1,1)\in\Z\oplus\Z\in\pi_0(\Diff(T^2\times I))$. It follows that $\pi_0(\Cont(C(K),\xi_\std))\rightarrow \pi_0(\Diff(C(K)))$ is surjective. The reader can check that in the case when $K$ is a torus knot the longitudinal Dehn twist is isotopic to the Identity in $\Diff(C(K))$.  In the next subsection we will see that \kalmans loop, understood as a loop of long Legendrians, is homotopic to Gramain loop (Corollary \ref{cor:SurjetivityLoopsTorusKnots}). To do this we just combine the previous Theorem with \cite{FMP:Legendrian}. An outcome of this article is that \kalmans loop is actually generated by a contact torus twist via Lemma \ref{lem:LegVSCont}. 
\end{remark}

\subsubsection{The $\pi_2$-invariant for loops of (formal) Legendrians} The previous discussion can be particularized to the case of Legendrian embeddings giving rise to an invariant of (formal) Legendrian embeddings. This invariant was previously treated in \cite{FMP:Legendrian} although it was introduced from a different perspective. 

Recall that in the Legendrian setting there is a homotopy equivalence 
\[
  \Phi:\L(K)\rightarrow \U(2)\times\L_{(p_0,v_0)}(K), \gamma\mapsto (A_\gamma, \widetilde{\gamma}=A_\gamma^{-1}\gamma). 
\]

\begin{definition}
  Let $\gamma^\theta\in\L(K)$, $\theta\in\NS^1$, the \em rotation number \em of the loop is 
  \[ 
    \rot_{\pi_1}(\gamma^\theta)=[A_{\gamma^\theta}]\in\pi_1(\U(2))\cong\Z
  \]
\end{definition}

Under the trivialization of $T\NS^3$ induced by the quaternions the standard contact structure trivializes as $\xi_\std(p)=\langle j\cdot p, k\cdot p\rangle$. Therefore, the derivative map (\ref{eq:DerivativeMap}) applied over a loop of Legendrians $\gamma^\theta$ can be understood as  $D_0\gamma^\theta:\NS^1\rightarrow \NS^1$. The rotation number of the loop can be computed as 
\[ 
  \rot_{\pi_1}(\gamma^\theta)=\deg(D_0\gamma^\theta).
\]

We see that for every loop of Legendrians $\gamma^\theta$ there is an associated loop of long Legendrians $\widetilde{\gamma}^\theta$. Moreover, both loops are homotopic if and only if $\rot_{\pi_1}(\gamma^\theta)=0$. 

\begin{example}\label{ex:KalmanLoopIsLong}
  Every \kalmans loop $\gamma^\theta_{L_{p,q}}$ around a Legendrian $L=\gamma(\NS^1)$ satisfies that 
  \[ 
    \rot_{\pi_1}(\gamma^\theta_{L_{p,q}})=0
  \]
  since $(\gamma^\theta_{L_{p,q}})'(0)$ is homotopic through Legendrian vectors to $\gamma'(0)$. In particular, these loops are homotopic through Legendrian loops to the loop of long Legendrian embeddings $\widetilde{\gamma}^\theta_{L_{p,q}}$. 
\end{example}

\begin{definition}
  Let $\gamma^\theta\in\L(K)$, $\theta\in\NS^1$, be a loop of Legendrian embeddings such that $\widetilde{\gamma}^\theta$ is contractible as a loop of smooth embeddings. The $\pi_2$-invariant of the loop $\gamma^\theta$ is defined as 
  \[ 
    \pi_2(\gamma^\theta)=\pi_2(\widetilde{\gamma}^\theta)\in\Z/m_K\Z.
  \]
\end{definition}

\begin{example}
  Let $\gamma^\theta_{p,q}\in \L^{\tbb}(T_{p,q})$, $\theta\in\NS^1$, be the loop of max-$\tb$ positive $(p,q)$-torus knots introduced by \kalman in \cite{Kalman:exotic}. This loop has the following properties: 
  \begin{itemize}
    \item [(i)] As explained above the rotation number of the loop is $\rot_{\pi_1}(\gamma^\theta_{p,q})=0.$ Therefore, $\gamma^\theta_{p,q}$ is homotopic through Legendrian loops to $\widetilde{\gamma}^\theta_{p,q}$. In an abuse of notation we will write $\gamma^\theta_{p,q}=\widetilde{\gamma}^\theta_{p,q}$ and assume beforehand that \kalmans loop is conformed by long knots.
    
    \item [(ii)] $\gamma^{2\theta}_{p,q}$, $\theta\in\NS^1$, is contractible as a loop of smooth embeddings. In particular, $\gamma^{2\theta}_{p,q}$, has a well-defined $\pi_2$-invariant.
   
    \item [(iii)] $\gamma^{2\theta}$ is non-contractible as a loop of (formal) Legendrian embeddings \cite{FMP:Legendrian,Kalman:exotic}. The non-triviality of the loop was proved by \kalman by using the monodromy invariant coming from LCH, introduced also by himself. The formal non-triviality was proved in \cite{FMP:Legendrian} by computing 
    $$ \pi_2(\gamma^{2\theta}_{p,q})=1\in\Z. $$
    In particular, $\gamma^{2\theta}_{p,q}$ is \dfn{not contractible} as a loop of long smooth embeddings. It also follows that this loop has infinite order (as a loop of long smooth embeddings and as a loop of formal Legendrians).
    
    \item [(iv)] In view of (\ref{eq:CharacterizationGramain}) the loop $\gamma^\theta_{p,q}$ is homotopic to Gramain loop.
  \end{itemize}
\end{example}

As a consequence of this discussion and Theorem \ref{thm:HatcherTorusKnots} we have

\begin{corollary}\label{cor:SurjetivityLoopsTorusKnots}
  The restriction of the inclusion 
  \[ 
    i:\{\gamma^{\theta}_{p,q}:\theta\in\NS^1\}\subseteq \L^{\tbb}_{(p_0,v_0)}(T_{p,q})\hookrightarrow \K_{(p_0,v_0)}(T_{p,q})
  \] 
  is a homotopy equivalence.
\end{corollary}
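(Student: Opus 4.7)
The plan is to show the inclusion is a homotopy equivalence of $K(\Z,1)$-spaces by checking that it induces an isomorphism on $\pi_1$. By Theorem~\ref{thm:HatcherTorusKnots}(iii), the target $\K_{(p_0,v_0)}(T_{p,q})$ is homotopy equivalent to $\SO(2)\cong\NS^1$, with fundamental group $\Z$ generated by Gramain's loop. The source $\{\gamma^\theta_{p,q}:\theta\in\NS^1\}$ is itself a circle, parametrized by \kalmans loop $\theta\mapsto\gamma^\theta_{p,q}$. So it suffices to verify that $[\gamma^\theta_{p,q}]$ is a generator of $\pi_1(\K_{(p_0,v_0)}(T_{p,q}))\cong\Z$; the resulting map between circles will then be a homotopy equivalence by Whitehead's theorem.

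To carry out this verification I would appeal to the characterization of the Gramain loop given by formula (\ref{eq:CharacterizationGramain}), which singles out $\beta^\theta$ as the unique class (up to homotopy of long smooth embeddings) satisfying $\pi_2(\beta^{2\theta})=1$. The Example preceding the corollary establishes the parallel fact $\pi_2(\gamma^{2\theta}_{p,q})=1$ for \kalmans loop. Combining these two identities yields $[\gamma^{2\theta}_{p,q}]=[\beta^{2\theta}]$ in $\pi_1(\K_{(p_0,v_0)}(T_{p,q}))\cong\Z$, and torsion-freeness of $\Z$ lets me cancel the factor of two to conclude $[\gamma^\theta_{p,q}]=[\beta^\theta]$, as desired.

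The only step requiring care is this final cancellation, and the cleanest way I would justify it is via the long exact sequence of the fibration (\ref{eq:LongEmbeddingsVSEmbeddings}), using Theorem~\ref{thm:HatcherTorusKnots}(i) to compute $\pi_1(\K(T_{p,q}))\cong\pi_1(\SO(4))\cong\Z/2$. This pins down the connecting map $\pi_2(\NS(T\NS^3))\cong\Z\to\pi_1(\K_{(p_0,v_0)}(T_{p,q}))\cong\Z$ as multiplication by $\pm 2$, and identifies the $\pi_2$-invariant, restricted to the index-$2$ subgroup where it is defined, with the underlying $\pi_1$-class. Under this identification, equality of $\pi_2$-invariants on the doubles translates directly to equality of the single classes. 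I do not foresee any further obstacle: all the geometric content has been packaged by Hatcher's theorem and the $\pi_2$-invariant computation for \kalmans loop, so the remaining work is a short algebraic exercise in the fibration sequence.
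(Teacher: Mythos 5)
Your proposal is correct and follows essentially the same route the paper intends: the paper deduces the corollary from item (iv) of the preceding example (K\'alm\'an's loop is homotopic to Gramain's loop because both have $\pi_2$-invariant $1$ on their doubles, via the characterization (\ref{eq:CharacterizationGramain})) together with Theorem~\ref{thm:HatcherTorusKnots}(iii), which identifies $\K_{(p_0,v_0)}(T_{p,q})\simeq\SO(2)$ with Gramain's loop as generator. Your extra care with the factor of two, pinned down by the long exact sequence of the fibration (\ref{eq:LongEmbeddingsVSEmbeddings}) and $\pi_1(\K(T_{p,q}))\cong\Z/2$, is exactly the justification implicit in the paper's characterization of Gramain's loop, so nothing is missing.
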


\subsection{Contact structures in the complement of a Legendrian}\label{subsec:ContactStructuresComplement}
The following is a well-known folk result in the non-parametric case. Recall that $\L^{\gamma}(L)$ is the space of Legendrian embeddings that realize the smooth link type $L$ in $\NS^3$ and are Legendrian isotopic to $\gamma$.

\begin{proposition}\label{prop:FormalInjectivity}
  Let $L\subseteq (\NS^3,\xi_\std)$ be a Legendrian link that satisfies the $C$-property and $\gamma$ an embedding that parametrizes L. Then, the inclusion $$ \L^\gamma(L)\hookrightarrow \FL^\gamma(L) $$ is a homotopy injection.
\end{proposition}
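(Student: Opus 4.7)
The plan has three stages: (i) establish a long-embedding homotopy equivalence between Legendrians and smooth embeddings via classifying spaces, exploiting the $C$-property; (ii) deduce long-level injectivity via the factoring $\L\hookrightarrow\FL\to\K$; (iii) propagate to regular embeddings via $\U(2)$-equivariance. For stage (i), combining the fibration~\eqref{eq:LongLegendrianEmbeddingsFibration} with Lemma~\ref{lem:FramingLegendrians} and the contractibility of $\Cont(\D^3,\xi_\std)$ (Eliashberg--Mishachev) gives $\L^\gamma_{(p_0,v_0)}(L)\simeq B\Cont(C(L),\xi_\std)$; analogously, $\K^\gamma_{(p_0,v_0)}(L)\simeq B\Diff(C(L))$ by Lemma~\ref{lem:FramingKnots} and Hatcher's contractibility of $\Diff(\D^3)$. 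Parametric Gray stability yields the Serre fibration
\[
  \Cont(C(L),\xi_\std)\hookrightarrow\Diff(C(L))\xrightarrow{\phi\mapsto\phi^{*}\xi_\std}\mathcal{C}(C(L),\xi_\std),
\]
whose base is contractible by the $C$-property, forcing $\Cont(C(L),\xi_\std)\to\Diff(C(L))$ to be a weak homotopy equivalence. Applying the classifying space construction then delivers the desired equivalence of long-embedding spaces.

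For stage (ii), the composition
\[
  \L^\gamma_{(p_0,v_0)}(L)\hookrightarrow\FL^\gamma_{(p_0,v_0)}(L)\to\K^\gamma_{(p_0,v_0)}(L),
\]
where the first arrow adds the constant formal homotopy $F_s=d\tilde{\gamma}$ and the second forgets the formal bundle data, coincides with the natural Legendrian-into-smooth inclusion, which is a weak homotopy equivalence by stage (i). Hence $\L^\gamma_{(p_0,v_0)}(L)\hookrightarrow\FL^\gamma_{(p_0,v_0)}(L)$ admits a left homotopy inverse (up to homotopy) and is therefore a weak homotopy injection.

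For stage (iii), I use the $\U(2)$-equivariant splitting~\eqref{eq:LongHomotopyEquivalence}, $\L^\gamma(L)\simeq\U(2)\times\L^\gamma_{(p_0,v_0)}(L)$, together with an analogous splitting $\FL^\gamma(L)\simeq\U(2)\times\FL^\gamma_{(p_0,v_0)}(L)$ coming from the $\U(2)$-action on $\FL^\gamma(L)$ by post-composition (which transports the formal data $F_s$) and a compatible section of a Serre fibration $\FL^\gamma(L)\to\NS(\xi)$ induced by an appropriate basepoint evaluation map. Under these identifications, the inclusion $\L^\gamma(L)\hookrightarrow\FL^\gamma(L)$ is the identity on the $\U(2)$-factor and the long inclusion on the other factor, so it is a weak homotopy injection by stage (ii). The main obstacle will be stage (iii): setting up the Serre fibration $\FL^\gamma(L)\to\NS(\xi)$ with the correct fiber, matching its fiber inclusion with that of stage (ii). This amounts to a careful parametric analysis of the formal bundle data $F_s$ under the $\U(2)$-action and the formal-endpoint fibration, but introduces no new contact-topological content beyond what is already in place.
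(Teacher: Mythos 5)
There is a genuine gap, and it sits exactly where you deferred the work: stages (ii) and (iii) require two \emph{different} spaces of ``long formal Legendrians,'' and the discrepancy between them is precisely the content of the formal Legendrian hypothesis. The only evaluation map on $\FL^\gamma(L)$ that lands in $\NS(\xi)$ and is equivariant for the $\U(2)$-action is $(\gamma,F_s)\mapsto(\gamma_1(0),F_1(0)\partial_t)$ (normalized); its fiber is cut out by a condition on $F_1$, so the forgetful map from that fiber only remembers $\gamma_1(0)=p_0$ and lands in $\K_{p_0}(L)$, not in $\K_{(p_0,v_0)}(L)$ (since $\gamma'=F_0$, not $F_1$). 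Stage (ii) applied to \emph{that} fiber would need the composite $\L^{\tbb}_{(p_0,v_0)}(L)\to\K_{p_0}(L)$ to be a homotopy injection, and it is not: for a positive torus knot, $\pi_1(\K_{(p_0,v_0)}(T_{p,q}))\cong\Z$ is generated by the Gramain loop while $\pi_1(\K_{p_0}(T_{p,q}))\cong\pi_1(\K(T_{p,q}))\cong\Z_2$ by Theorem~\ref{thm:HatcherTorusKnots}, so the square of \kalmans loop is a nontrivial class of long Legendrians that dies under the forgetful map. That class survives in $\FL$ only because of its $\pi_2$-invariant, i.e.\ because the evaluation sphere must bound in $\NS(\xi)\cong\U(2)$ rather than merely in $\NS(T\NS^3)$ --- exactly the information the forgetful map to smooth embeddings discards. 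So no argument of the form ``factor through $\K$ and use that the composite is injective'' can close stage (iii); if you instead define the long formal space by $\gamma_1'(0)=v_0$ so that stage (ii) works, the $\U(2)$-splitting of stage (iii) fails because $\U(2)$ does not act transitively on $\NS(T\NS^3)$. The paper's proof keeps the formal capping disk $F^{z,r}_s$ to the very end and uses it to contract the evaluation sphere of the resulting sphere of contactomorphisms inside $\U(2)$ via the Eliashberg--Mishachev equivalence $\Cont(\NS^3,\xi_\std)\simeq\U(2)$; that step is genuine contact-topological content, not bookkeeping.

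Two further problems in stage (i). First, the map $\Diff(C(L))\to\mathcal{C}(C(L),\xi_\std)$, $\phi\mapsto\phi^{*}\xi_\std$, need not land in $\mathcal{C}(C(L),\xi_\std)$: a diffeomorphism not isotopic to the identity may carry $\xi_\std$ to a non-isotopic tight structure, so the Gray fibration only identifies $\Cont(C(L),\xi_\std)$ with the stabilizer of the isotopy class of $\xi_\std$, a union of components of $\Diff(C(L))$. The $C$-property therefore yields a homotopy \emph{injection} $\Cont(C(L),\xi_\std)\hookrightarrow\Diff(C(L))$, not an equivalence (and indeed $\L^{\tbb}(T_{p,q})\simeq\U(2)\times\NS^1$ versus $\K(T_{p,q})\simeq\SO(4)$ shows no equivalence can hold); injectivity would still suffice for your stage (ii). Second, for links with more than one component the identification $\K^\gamma_{(p_0,v_0)}(L)\simeq B\Diff(C(L))$ fails: as the paper notes in Remark~\ref{rmk:Framings}, Lemma~\ref{lem:FramingKnots} does not extend to links because the smooth stabilizer contains meridional rotations of the components not carrying the base point, while Lemma~\ref{lem:FramingLegendrians} does extend on the contact side. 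By contrast, the paper argues directly on a relative class in $\pi_k(\FL^\gamma,\L^\gamma)$, using the formal data to build framings, Lemma~\ref{lem:WeinsteinWithParameters} to make the transporting diffeomorphisms contact near the link, and the $C$-property plus Gray stability to upgrade them to global contactomorphisms before the final $\U(2)$ step.
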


\begin{proof}
  Fix polar coordinates $(z,r)\in\D^k=\NS^{k-1}\times[0,1]/\sim$ on the disk. Let $$(\gamma^{z,r}, F^{z,r}_s)\in\pi_k(\FL^\gamma,\L^\gamma)$$ be a relative homotopy class, i.e. $(\gamma^{z,1},F^{z,1}_s)=(\gamma^{z,1},(\gamma^{z,1})')$ is Legendrian. To prove the proposition, we should show that $\gamma^{z,1}$ is a contractible family of Legendrian embeddings.

  Every Legendrian embedding $\gamma^{z,1}$ is framed by means of the framing $\langle i\cdot(\gamma^{z,1})', i\cdot\gamma^{z,1} \rangle$. Here, we consider $\xi_\std$ as the complex tangencies of $\NS^3 \subset \mathbb{R}^4$. This family of framings can be extended over the whole family $(\gamma^{z,r},F^{z,r}_s)$. Indeed, $\langle i\cdot F^{z,r}_1, i\cdot \gamma^{z,r} \rangle$ defines a framing for the normal bundle $\nu(F^{z,r}_1)$ and the formal Legendrian structure allows us to extend this framing over $\nu(F^{z,r}_s)$ for every $((z,r),s)\in\D^k\times[0,1]$. Denote by $\mathcal{F}^{z,r}$ the framings over $\nu(F^{z,r}_0)=\nu((\gamma^{z,r})')=\nu(\gamma^{z,r})$.

  Apply the smooth Isotopy Extension Theorem to find a family of diffeomorphisms $\varphi^{z,r}$, $(z,r)\in\D^k$, such that 
  \[ 
    \varphi^{z,r}\circ\gamma^{z,r}=\gamma^{N,1},
  \] 
  where $N\in\NS^{k-1}$ denotes the north pole. Since the parameter space is the disk $\D^k$ we may assume, maybe after a further isotopy, that the family of framings of $\nu(\gamma^{N,1})$ given by 
  \[ 
    \varphi^{z,r}_* \mathcal{F}^{z,r}\equiv \mathcal{F}^{N,1}
  \] 
  is constant. In particular, for $r=1$ we can assume the diffeomorphism $\varphi^{z,1}$ sends the contact framing $\mathcal{F}^{z,1}$ over the Legendrian $\gamma^{z,1}$ to the contact framing $\mathcal{F}^{N,1}$ over the Legendrian $\gamma^{N,1}$. It follows from Lemma \ref{lem:WeinsteinWithParameters}, to be proved below, that we may further isotope the family $\varphi^{z,1}$, relative to the condition $\varphi^{z,1}\circ\gamma^{z,1}=\gamma^{N,1}$, into a new family $\phi^z$ such that 
  \begin{itemize}
    \item $\phi^z\circ \gamma^{z,1}=\gamma^{N,1}$ 
    \item $\phi^{z}_{|\Op(\gamma^{z,1})}:(\Op(\gamma^{z,1}),\xi_\std)\rightarrow (\Op(\gamma^{N,1}),\xi_\std)$ is a contactomorphism.
  \end{itemize}

  Let $L=\image \gamma^{N,1}$. The family of tight contact structures $\xi^z=\phi^z_*\xi_\std$ lies in the space $\mathcal{C}(C(L),\xi_\std)$. Note that we may assume that $\xi^N=\xi_\std$. Since $L$ satisfies the $C$-property, this family is contractible so there exists a homotopy $\xi^z_t\in\mathcal{C}(C(L),\xi_\std)$, $t\in[0,1]$, such that $\xi^z_0=\xi^z$, $\xi^N_t=\xi_\std$ and $\xi^z_1=\xi_\std$. Thus we can apply the Gray stability theorem and find a family $G^z\in\Diff(C(L))$ such that $G^z\circ \xi^z=\xi_\std$ and $G^N=\Id$. Here we consider $G^z$ as diffeomorphisms of $\NS^3$ fixing $\Op(L)$ point-wise. 

  In particular, the family of diffeomorphisms $H^z=G^z\circ \phi^z$, $z\in\NS^{k-1}$, satisfies that 
  \begin{itemize}
    \item $H^z\circ\gamma^{z,1}=\gamma^{N,1}$ and 
    \item $H^z\in\Cont(\NS^3,\xi_\std)$.
  \end{itemize}

  This implies that the initial sphere of Legendrian embeddings $\gamma^{z,1}=(H^z)^{-1}\gamma^{N,1}$ is generated by a sphere of contactomorphisms. We claim that this sphere of contactomorphisms is contractible. Indeed, it follows from Eliashberg--Mishachev \cite{EliashbergMishachev:tightS3} that the evaluation map $\operatorname{ev}_{(p_0,v_0)}:\Cont(\NS^3,\xi_\std)\rightarrow \U(2), \;\Phi\mapsto (\Phi(p_0), d_{p_0} \Phi (v_0))$ is a homotopy equivalence. The homotopy class of the evaluation map does not depend on the choice of $(p_0,v_0)\in\NS(\xi_\std)$ so we fix $(p_0,v_0)=(\gamma^{N,1}(0),(\gamma^{N,1})'(0)))$ to define it. It follows that 
  \[ 
    \operatorname{ev}_{(p_0,v_0)}((H^z)^{-1})=(\gamma^{z,1}(0),(\gamma^{z,1})'(0))\in \U(2), z\in\NS^{k-1}
  \] 
  is a contractible sphere inside $\U(2)$ since it bounds the disk $(\gamma^{z,r}(0),F^{z,r}_1(0))$. Therefore, the sphere of contactomorphisms $(H^z)^{-1}$ is contractible as claimed and, therefore, the family of Legendrians $\gamma^{z,1}$ is contractible. This concludes the argument.
\end{proof}

It remains to prove the following technical result.

\begin{lemma}\label{lem:WeinsteinWithParameters}
  Let $(M,\xi)$ be a contact $3$-manifold. Let $K$ be a compact parameter space and $\gamma^k$, $k\in K$, be a family of Legendrian embeddings. Assume that there exists a Legendrian embedding $\gamma$ and a family of diffeomorphisms $\varphi^k\in \Diff(M)$, $k\in K$, such that
  \begin{itemize}
    \item $\varphi^k\circ \gamma^k=\gamma$, for $k\in K$; and 
    \item $(\varphi^k)_*(\xi_p)=\xi_{\varphi^k(p)}$ as oriented plane fields, for every $p\in\image(\gamma^k)$.
  \end{itemize}
  Then, there exists a homotopy $\varphi^k_t\in\Diff(M)$, $(k,t)\in K\times[0,1]$, such that 
  \begin{itemize}
    \item [(i)] $\varphi^k_0=\varphi^k$, $k\in K$;
    \item [(ii)] $\varphi^k_t \circ \gamma^k=\gamma$, $(k,t)\in K\times [0,1]$; and 
    \item [(iii)] $(\varphi^k_1)_{|\Op(\gamma^k)}$ is a contactomorphism for $k\in K$. 
  \end{itemize}
\end{lemma}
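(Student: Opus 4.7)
The plan is to reduce the lemma to a parametric version of the Legendrian neighborhood theorem via Moser's trick. Define $\eta^k := (\varphi^k)^*\xi$, which is a contact structure on $M$ since $\varphi^k$ is a diffeomorphism. The hypothesis that $(\varphi^k)_*\xi_p = \xi_{\varphi^k(p)}$ for every $p \in \image(\gamma^k)$ says exactly that $\eta^k = \xi$ pointwise along $\gamma^k$ as oriented plane fields. If I can construct a continuous family $g^k_t \in \Diff(M)$, $(k,t) \in K \times [0,1]$, with $g^k_0 = \Id$, $g^k_t \circ \gamma^k = \gamma^k$, and $(g^k_1)^*\eta^k = \xi$ in some neighborhood of $\image(\gamma^k)$, then setting $\varphi^k_t := \varphi^k \circ g^k_t$ yields the lemma: (i) and (ii) are immediate, and (iii) follows from $(\varphi^k_1)^*\xi = (g^k_1)^*(\varphi^k)^*\xi = (g^k_1)^*\eta^k = \xi$ near $\gamma^k$.

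Next, I would set up Moser's argument parametrically. Fix a contact form $\alpha_0$ for $\xi$ and choose contact forms $\alpha^k$ for $\eta^k$ depending continuously on $k$. Because $\eta^k$ and $\xi$ agree as \emph{oriented} $2$-plane fields along $\gamma^k$, there exists a positive function $f^k$ defined along $\gamma^k$ with $\alpha^k|_{\gamma^k} = f^k\, \alpha_0|_{\gamma^k}$. Extending $f^k$ smoothly to a neighborhood of $\gamma^k$ (continuously in $k$) and replacing $\alpha^k$ by $\alpha^k/f^k$, I may assume $\alpha^k = \alpha_0$ along $\gamma^k$. Consider the linear interpolation $\alpha^k_t := (1-t)\alpha_0 + t\alpha^k$, which equals $\alpha_0$ on $\gamma^k$. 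Since the contact condition is open and $K \times [0,1]$ is compact, $\alpha^k_t$ is a contact form on a common open neighborhood $U$ of $\image(\gamma^k)$ for all $(k,t) \in K\times[0,1]$.

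The third step is to solve Moser's equation $\mathcal{L}_{X^k_t}\alpha^k_t + \dot\alpha^k_t = \mu^k_t \alpha^k_t$ by imposing $\iota_{X^k_t}\alpha^k_t = 0$. Contraction with the Reeb vector field $R^k_t$ of $\alpha^k_t$ forces $\mu^k_t = R^k_t(\dot\alpha^k_t)$, and then the nondegeneracy of $d\alpha^k_t$ on $\ker\alpha^k_t$ determines $X^k_t \in \ker \alpha^k_t$ uniquely from $\iota_{X^k_t}d\alpha^k_t = \mu^k_t\alpha^k_t - \dot\alpha^k_t$. Since $\dot\alpha^k_t = \alpha^k - \alpha_0$ vanishes along $\gamma^k$, so do $\mu^k_t$ and $X^k_t$. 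Multiplying $X^k_t$ by a cut-off function supported in $U$ and equal to $1$ on a smaller neighborhood of $\image(\gamma^k)$ yields a globally defined time-dependent vector field on $M$; its flow is defined for all $t \in [0,1]$ (the points of $\gamma^k$ are stationary and nearby orbits stay close by continuous dependence), producing the desired family $g^k_t \in \Diff(M)$. By construction $g^k_t$ fixes $\gamma^k$ pointwise and $(g^k_1)^*\eta^k = \xi$ near $\gamma^k$.

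The main technical obstacle lies in securing \emph{uniform} parametric control: the contact forms $\alpha^k$, the extension of $f^k$, the common neighborhood $U$ where $\alpha^k_t$ is contact, and the cut-off function must all be chosen continuously in $k$ so that $g^k_t$ exists and satisfies the three conditions simultaneously for all $k \in K$. Compactness of $K$ together with standard smooth partition-of-unity constructions handle these choices, but one must be careful that the neighborhood on which $g^k_1$ is contact has uniform size in $k$, which is why the compactness hypothesis on $K$ is used throughout.
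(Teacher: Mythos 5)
Your proof is correct, but it takes a genuinely different route from the paper's. You run a parametric Moser/Gray-stability argument: pull back $\xi$ by $\varphi^k$, normalize the contact forms so that $\alpha^k=\alpha_0$ along $\gamma^k$ (possible precisely because the planes agree \emph{with orientations}), observe that the linear interpolation stays contact near $\gamma^k$ since at points of the Legendrian the $3$-form $\alpha^k_t\wedge d\alpha^k_t$ is a convex combination of two positive volume forms, and then integrate the Moser vector field, which vanishes along $\gamma^k$ because $\dot\alpha^k_t=\alpha^k-\alpha_0$ does. The paper instead avoids Moser entirely: it uses the homotopy equivalence between Legendrian embeddings and ``fat'' Legendrian embeddings (contact embeddings of standard neighborhoods, as in Lemma \ref{lem:FramingLegendrians}) to reduce to a family of smooth embeddings $\tilde e^k$ of $\NS^1\times\D^2$ into $(J^1\NS^1,\xi_\std)$ fixing the zero-section and preserving $\xi_\std$ along it, applies the fiberwise contact dilation $f_t(\theta,y,z)=(\theta,ty,tz)$ to squeeze $\tilde e^k$ to its fiberwise vertical differential (an Alexander-trick-style linearization), computes from the oriented-plane hypothesis that the resulting matrices are upper triangular with positive diagonal entries, and contracts them to the identity by an explicit formula, finishing with the smooth isotopy extension theorem. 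Your approach is the standard parametric Weinstein/Legendrian neighborhood theorem and is more self-contained conceptually, at the cost of the bookkeeping you flag (uniform neighborhoods, cutoffs, continuity of all choices in $k$); the paper's approach is entirely explicit in coordinates, reuses machinery already established for Lemma \ref{lem:FramingLegendrians}, and produces the homotopy by closed formulas rather than by solving an ODE. Both exploit the orientation hypothesis at the decisive moment: you through positivity of both summands in the convex combination, the paper through positivity of $a_{11}^k$ and $a_{22}^k$.
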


\begin{proof}
  Fix coordinates of $(J^1\NS^1,\xi_\std)$, a tubular neighborhood of $\gamma(\NS^1)$, in such a way that $\gamma(\theta)=(\theta,0,0)\in J^1\NS^1\subseteq M$. Here, the coordinates in $J^1\NS^1=\NS^1\times\R^2$ are $(\theta,y,z)$ and $\xi_\std=\ker(dz-yd\theta)$. 

  By a fat Legendrian embedding we mean a contact embedding of a standard neighborhood $e\colon (\NS^1\times\D^2,\xi_\std)\rightarrow (M,\xi)$. Note that the restriction of a fat Legendrian to the $0$-section (the core of $\NS^1\times\D^2$) is a genuine Legendrian embedding. In fact, this restriction map defines a homotopy equivalence between the space of fat Legendrian embeddings and the space of Legendrian embeddings (this is proved in \cite{FMP:satellite}, although the proof is essentially given in Lemma \ref{lem:FramingLegendrians}). If we combine this with the compactness of the families $\gamma^k$ and $\varphi^k$ we may find a family of contact embeddings $e^k:(\NS^1\times\D^2,\xi_\std)\rightarrow (M,\xi)$, $k\in K$, such that 
  \begin{itemize}
    \item [(i)] $e^k(\theta,0,0)=\gamma^k(\theta)$, for $k\in K$;
    \item [(ii)] $\tilde{e}^k=\varphi^k \circ e^k: \NS^1\times \D^2\rightarrow  J^1\NS^1\subseteq M$ is a smooth embedding, for $k \in K$;
    \item [(iii)] $\tilde{e}^k(\theta,0,0)=\gamma(\theta)=(\theta,0,0)\in J^1\NS^1$, for $k\in K$ and;
    \item [(iv)] $d_{(\theta,0,0)}\tilde{e}^k(\xi_\std)=\xi_{\std,(\theta,0,0)}$ as oriented plane fields, for $k\in K$.
  \end{itemize}

  From now on we will focus on the family of smooth embeddings $$\tilde{e}^k:\NS^1\times \D^2\rightarrow (J^1\NS^1,\xi_\std).$$ By the smooth isotopy extension theorem the result will follow if we prove that there exists a homotopy of embeddings $$\tilde{e}^k_t:\NS^1\times\D^2\rightarrow J^1\NS^1,(k,t)\in K\times[0,2],$$ satisfying properties (i), (ii) and (iii) above and such that $\tilde{e}^k_0=\tilde{e}^k$ and $\tilde{e}^k_2:(\NS^1\times\D^2,\xi_\std)\rightarrow (J^1\NS^1,\xi_\std)$ is a contact embedding. We built such a homotopy in two steps. 

  First, consider the contact dilation along the fiber directions $$f_t:(J^1\NS^1,\xi_\std)\rightarrow (J^1\NS^1,\xi_\std),(\theta,y,z)\mapsto (\theta, ty,tz)$$ which is a contactomorphism for $t>0$. Define the first half of the homotopy as 
  \[
    \tilde{e}^k_t=f_{1-t}^{-1}\circ \tilde{e}^k\circ f_{1-t}:\NS^1\times\D^2\rightarrow J^1\NS^1, (k,t)\in K\times[0,1]. 
  \] 

  For $t=1$ one should understand the previous expression as $$\tilde{e}^k_1(\theta,y,z)=\lim_{t\mapsto 1^-} f_{1-t}^{-1}\circ \tilde{e}^k\circ f_{1-t}(\theta,y,z)$$ which is just the vertical differential of $\tilde{e}^k$ along the $0$-section. In particular, $\tilde{e}^k_1$ is linear in the fiber directions so it has the form 

  \[ 
    \tilde{e}^k_1(\theta, y,z)=(\theta, a_{11}^{k}(\theta)y+a_{12}^{k}(\theta)z,a_{21}^{k}(\theta)y+a_{22}^{k}(\theta)z), 
  \] 
  for some family of smooth functions $a_{i,j}^{k}:\NS^1\rightarrow \R$ such that $a_{11}^k\cdot a_{22}^k-a_{12}^{k}\cdot a_{21}^k>0$. Note that since $f_t$ is a contactomorphism, the condition (iv) is fulfilled by the whole homotopy $\tilde{e}^k_t$. For $t=1$ one can compute 
  \[ 
    d_{(\theta,0,0)} \tilde{e}^k_1(\partial_\theta)=\partial_\theta,
  \] 
  \[
    d_{(\theta,0,0)} \tilde{e}^k_1 (\partial_y)=a_{11}^{k}(\theta)\partial_y+a_{21}^{k}(\theta)\partial_z \text{ and } 
  \] 
  \[ 
    d_{(\theta,0,0)} \tilde{e}^k_1 (\partial_z)=a_{12}^{k}(\theta)\partial_y+a_{22}^{k}(\theta)\partial_z 
  \] 
  and the condition (iv) then implies that $a_{21}^{k}(\theta)=0$, $a_{11}^{k}(\theta)>0$ and $a_{22}^{k}(\theta)>0$. Here, we are using the assumption about the orientation of $\xi$. In particular, 
  \[
    \tilde{e}^k_1(\theta,y,z)=(\theta, a_{11}^{k}(\theta)y+a_{12}^{k}(\theta)z,a_{22}^{k}(\theta)z).
  \] 

  Finally, since $a_{11}^{k}$ and $a_{22}^{k}$ are positive functions we may extend the homotopy of embeddings via 
  \[ 
    \tilde{e}^k_t(\theta,y,z)=(\theta, ((2-t)a_{11}^{k}(\theta)+t-1)y+(2-t)a_{12}^{k}(\theta), ((2-t)a_{22}^{k}(\theta)+t-1)z), (k,t)\in K\times[1,2],
  \] 
  which satisfies the required properties since $\tilde{e}^k_2(\theta,y,z)=(\theta,y,z)$ is a clearly a contactomorphism.
\end{proof}

\section{Legendrian cables}\label{sec:cables}

In this section, we review constructions and properties of Legendrian cables in tight contact $3$-manifolds. Finally, we prove Theorem~\ref{thm:SufficientlyPositiveLinkCables}, \ref{thm:cable} and Corollary \ref{cor:IteratedTorus}.

\subsection{Cabling Legendrian knots}\label{subsec:cables}
Let $K$ be a null-homologous knot in a contact $3$-manifold $(M, \xi)$, $\mu$ a meridian of $K$ and $\lambda$ a Seifert longitude. Take $N$ to be a neighborhood of $K$. For relatively prime integers $p$ and $q$, We define a $(p,q)$-cable $K_{p,q}$ of $K$ to be a knot on $\bd N$ with homology class $p [\lambda] + q [\mu]$ and define the slope of $K_{p,q}$ to be $q/p$. We say $K_{p,q}$ is a \dfn{sufficiently positive cable} of $K$ if $q/p > \tbb(K) + 1$.

Let $L$ be a null-homologous Legendrian knot in a contact $3$-manifold $(M,\xi)$ and $N$ a standard neighborhood of $L$. For $q/p > \tb(L)$, we define the \dfn{standard Legendrian cable} $L_{p,q}$ to be a ruling curve on $\bd N$ of slope $q/p$.

\begin{theorem}[\cite{CEM:cables,EtnyreHonda:UTP}]\label{thm:MaxTBCableItsStandard}
  Let $(M,\xi)$ be a tight contact $3$-manifold, $K$ a null-homologous knot type in $M$ and $K_{p,q}$ a sufficiently positive cable of $K$. Then $\tbb(K_{p,q}) = pq - |q - p\tbb(K)| $. Moreover, if $L$ is a Legendrian representative of $K$ with $\tb(L) = \tbb(K)$, then its standard cable $L_{p,q}$ also has the maximal Thurston-Bennequin number, i.e. $\tb(L_{p,q}) = \tbb(K_{p,q})$.
\end{theorem}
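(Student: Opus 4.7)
The plan is to prove the equality $\tbb(K_{p,q}) = pq - |q - p\tbb(K)|$ by matching upper and lower bounds on the Thurston--Bennequin invariant, with the standard cable $L_{p,q}$ of a max-$\tb$ representative $L$ of $K$ providing the construction that realizes the optimum. Throughout I adopt the convention that a primitive curve of class $p\lambda + q\mu$ on a torus has slope $q/p$; in particular, the dividing set of the convex boundary of a standard neighborhood of a Legendrian with twisting number $n$ consists of two parallel curves of slope $n$.

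First, for the inequality $\tbb(K_{p,q}) \geq pq - |q - p\tbb(K)|$, which also proves the second assertion of the theorem, I would compute $\tb(L_{p,q})$ directly. Fix a max-$\tb$ representative $L$ of $K$ and a standard neighborhood $N$, whose convex boundary carries a dividing set $\Gamma$ consisting of two parallel curves of slope $\tbb(K)$. Since $q/p > \tbb(K) + 1 > \tbb(K)$, the Legendrian realization principle produces $L_{p,q}$ as a ruling curve of slope $q/p$ on $\bd N$, with
\[
  \tw_{\bd N}(L_{p,q}) \;=\; -\tfrac{1}{2}|L_{p,q} \cap \Gamma| \;=\; -|q - p\tbb(K)|.
\]
The framing induced by $\bd N$ differs from the Seifert framing of $L_{p,q}$ by $pq$ (by resolving the intersections coming from $p$ parallel push-offs of a Seifert surface of $K$), so $\tb(L_{p,q}) = pq - |q - p\tbb(K)|$.

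For the matching upper bound I would show that any Legendrian representative $\Lambda$ of $K_{p,q}$ can be Legendrian isotoped to lie as a ruling curve on the convex boundary of a solid torus $N' \supset \Lambda$ whose core is a max-$\tb$ Legendrian representative of $K$. Starting from a convex annulus connecting $\Lambda$ to a Legendrian approximation of $K$, I would apply the bypass--thickening technology of Honda and Etnyre--Honda: successive bypasses on $\bd N'$ either destabilize the core or decrease the dividing slope of $\bd N'$, and under the hypothesis $q/p > \tbb(K) + 1$ this procedure terminates with the core at maximal $\tb$ and the dividing slope of $\bd N'$ equal to $\tbb(K)$. The same intersection count with $\Gamma$ as in the construction above then yields $\tb(\Lambda) \leq pq - |q - p\tbb(K)|$, which forces $L_{p,q}$ to be maximal in its Legendrian isotopy class.

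The main obstacle is the thickening step in the upper bound: one has to rule out pathological Legendrian cables sitting on convex tori of dividing slope strictly larger than $\tbb(K)$, i.e.\ one must control the contact width $\omega(K)$ relative to the cable slope. This is exactly the role of the sufficient positivity assumption: it guarantees that every bypass half-disk encountered while thickening the convex annulus toward the core is either non-effective or strictly decreases the dividing slope of $\bd N'$, never creating overtwistedness and eventually driving the core to maximal twisting. Once this thickening result is in place, the two bounds meet and both assertions of the theorem follow at once.
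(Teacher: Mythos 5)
The paper does not prove this statement at all --- it is imported verbatim from the cited references (Etnyre--Honda's work on the uniform thickening property and Chakraborty--Etnyre--Min's cable paper), so there is no internal proof to compare against; your proposal is essentially a reconstruction of the argument in those references. Your lower bound is correct and complete: realizing $L_{p,q}$ as a ruling curve of slope $q/p$ on the boundary of a standard neighborhood of a max-$\tb$ representative, counting $|L_{p,q}\cap\Gamma| = 2|q - p\tbb(K)|$ against the two dividing curves of slope $\tbb(K)$, and shifting by $pq$ between the torus framing and the Seifert framing is exactly the computation the paper itself performs for torus links in Section~6.1 (equation~(6.1)). This also disposes of the ``moreover'' clause once the upper bound is in place.

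The genuine gap is in the upper bound, specifically in the sentence claiming that the thickening procedure ``terminates with the core at maximal $\tb$ and the dividing slope of $\bd N'$ equal to $\tbb(K)$.'' This is precisely what can fail: the contact width satisfies $\tbb(K) \le \omega(K) \le \tbb(K)+1$, and there exist knot types (Etnyre--Honda's non-UTP examples among iterated torus knots) admitting solid tori whose convex boundary has dividing slope strictly between $\tbb(K)$ and $\tbb(K)+1$ and which do \emph{not} thicken to a standard neighborhood of a max-$\tb$ representative. For a Legendrian $\Lambda$ of slope $q/p$ sitting on such a torus of dividing slope $s \in (\tbb(K),\tbb(K)+1)$, the naive bound $\tw(\Lambda,\bd N') \le -|q-ps|$ is weaker than $-|q - p\tbb(K)|$ by up to $p$, so the two bounds do not meet and a separate argument is required for exactly this range of slopes. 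Your final paragraph correctly identifies sufficient positivity as the relevant hypothesis, but it asserts rather than shows that $q/p > \tbb(K)+1$ forces every bypass encountered to be non-effective or slope-decreasing; in the cited references this step is the substantive content (an analysis of the convex annulus between $\Lambda$ and the boundary of a standard neighborhood of the underlying knot, together with the inequality $\omega(K)\le\tbb(K)+1$ and a case analysis on the intermediate tori). As written, the hard half of the theorem is deferred to the same literature the paper itself cites, so the proposal should be regarded as a correct strategy with the decisive step left open rather than a complete proof.
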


In \cite{CEM:cables}, Chakraborty, Etnyre and the second author classified sufficiently positive Legendrian cables in terms of the classification of the underlying knot. In this paper, we only need a part of this result for Legendrian cables with the maximal Thurston-Bennequin number. 

\begin{theorem}[Chakraborty--Etnyre--Min \cite{CEM:cables}] \label{thm:LegCables-pi0}
  Let $K$ be a knot type in $(S^3,\xi_\std)$ and $K_{p,q}$ a sufficiently positive cable of $K$. Then the space of Legendrian embeddings of $K_{p,q}$ and the space of Legendrian embeddings of $K$ with the maximal Thurston-Bennequin number are isomorphic in the level of $\pi_0$. In other words, 
  \[
   \pi_0(\L^{\tbb}(K_{p,q})) \cong \pi_0(\L^{\tbb}(K)).
  \]
\end{theorem}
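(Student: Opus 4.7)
The plan is to construct a bijection $\Phi\colon \pi_0(\L^{\tbb}(K)) \to \pi_0(\L^{\tbb}(K_{p,q}))$ induced by the standard cabling operation. Given a max-$\tb$ Legendrian representative $L$ of $K$, let $N(L)$ be a standard neighborhood of $L$ with convex boundary and define $\Phi([L])$ to be the isotopy class of a $(p,q)$-ruling curve $L_{p,q}$ on $\partial N(L)$. This lies in $\pi_0(\L^{\tbb}(K_{p,q}))$ by Theorem \ref{thm:MaxTBCableItsStandard}. The map is well-defined: the ruling curves of slope $q/p$ on $\partial N(L)$ form a connected $\NS^1$-family of Legendrians lying in a single isotopy class, and Legendrian isotopic cores $L \sim L'$ admit standard neighborhoods related by an ambient contact isotopy.

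For injectivity, suppose $L, L' \in \L^{\tbb}(K)$ satisfy $L_{p,q} \sim L'_{p,q}$. The contact isotopy extension theorem yields an ambient contactomorphism $\varphi$ of $(\NS^3,\xi_\std)$ with $\varphi(L_{p,q}) = L'_{p,q}$. Then $\varphi(N(L))$ and $N(L')$ are two standard solid torus neighborhoods sharing $L'_{p,q}$ as a ruling curve on the boundary, both with dividing slope determined by $\tbb(K)$. By Honda's classification of tight contact structures on the solid torus, such a standard neighborhood is unique up to contact isotopy rel its boundary characteristic foliation, producing a contact isotopy from $\varphi(N(L))$ to $N(L')$ that takes $\varphi(L)$ to $L'$. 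Precomposing with $\varphi$ gives the desired Legendrian isotopy $L \sim L'$.

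For surjectivity, let $\widetilde{L} \in \L^{\tbb}(K_{p,q})$. Take a smooth solid torus $N$ with $\widetilde{L}$ as a $(p,q)$-cable on $\partial N$, and apply Giroux's flexibility rel $\widetilde{L}$ to make $\partial N$ a standard convex torus of some dividing slope $s \leq \tbb(K)$. If $s < \tbb(K)$, a bypass thickening argument using the contact width $\omega(K)$ of $K$ (see \cite{EtnyreHonda:UTP}) enlarges $N$ while increasing $s$. The sufficient positivity condition $q/p > \tbb(K)+1$ ensures that this thickening can be carried out disjointly from $\widetilde{L}$, i.e.\ without reducing $\tb(\widetilde{L})$. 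Iterating leads to a standard neighborhood $N'$ of a max-$\tb$ Legendrian core $L$ of $K$ with $\widetilde{L}$ still a ruling curve on $\partial N'$, whence $\widetilde{L} \sim L_{p,q} = \Phi([L])$. The main obstacle lies in this bypass control: one must rule out bypasses that would destabilize $\widetilde{L}$ during thickening, which is precisely what sufficient positivity provides, in parallel with the bypass analysis in the proof of Lemma \ref{lem:DensityPropertyStdAnnuli}.
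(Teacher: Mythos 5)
First, a point of reference: the paper does not prove Theorem \ref{thm:LegCables-pi0} at all --- it is imported as a black box from \cite{CEM:cables} --- so there is no internal argument to compare against. Your strategy (realize the correspondence by the standard cabling map $L\mapsto L_{p,q}$, and prove surjectivity by upgrading an arbitrary companion torus of a max-$\tb$ cable to the boundary of a standard neighborhood of a max-$\tb$ representative of $K$) is indeed the strategy of that reference and of the earlier Etnyre--Honda work \cite{EtnyreHonda:UTP}, and the well-definedness step is fine. However, two steps as written contain genuine gaps, and they are precisely the technical heart of the theorem rather than routine details.

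In the injectivity step, Honda's classification of tight contact structures on the solid torus does not deliver what you invoke. That classification concerns contact structures on a \emph{fixed} solid torus with fixed boundary data, up to isotopy rel boundary; here $\varphi(N(L))$ and $N(L')$ are two \emph{differently embedded} solid tori in $(\NS^3,\xi_\std)$ whose convex boundaries are distinct surfaces meeting along the single curve $L'_{p,q}$. What you actually need is an ambient contact isotopy of $\NS^3$, fixing $L'_{p,q}$, carrying one solid torus onto the other; producing it requires its own convex-surface argument (e.g.\ isotopy discretization applied to the two boundary tori rel the ruling curve, with bypass control), and this is exactly the kind of uniqueness lemma that \cite{CEM:cables} must prove separately. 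In the surjectivity step there is a geometric inconsistency: if you enlarge $N$ to $N'$ with $N\subsetneq N'$ ``disjointly from $\widetilde{L}$,'' then $\partial N'$ has moved past $\widetilde{L}\subset\partial N$, so $\widetilde{L}$ ends up in the \emph{interior} of $N'$ rather than on its boundary, and the conclusion that $\widetilde{L}$ is a ruling curve on $\partial N'$ does not follow. The actual argument works in the complement of a standard neighborhood of $\widetilde{L}$, comparing the solid tori on either side of the cabling torus and locating a convex torus in the intermediate layer that still contains $\widetilde{L}$; the twisting computation $\tb(\widetilde{L})=pq-|q-ps|$ together with $s\le\omega(K)<q/p$ is what pins the dividing slope of that torus to $\tbb(K)$. (Note also that maximality of $\tb(\widetilde{L})$ already forces $s\ge\tbb(K)$ for an efficient intersection with the dividing set, so the direction of your proposed thickening should be re-examined.) Both gaps are repairable along the lines of \cite{CEM:cables}, but as written the proof is an outline of the known strategy rather than a complete argument.
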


\subsection{Proof of Theorem \ref{thm:SufficientlyPositiveLinkCables}}\label{subsec:ProofLinkCables}
Let $L=(L^1,\ldots,L^n)$ be a Legendrian link in a tight contact $3$-manifold $(M,\xi)$ with the maximal Thurston--Bennequin invariants. Consider a sufficiently positive $(B,C)$-cable $L^B_C$ of $L$ such that each component has the maximal Thurston--Bennequin number. Here, $B=\{i_1,\ldots,i_n\}\in\{0,1\}^n$ and $C = \{ (p_1,q_1), \cdots, (p_n,q_n) \}$ where $(p_i,q_i) = (0,0)$, or $q_i/p_i > \tb(L^i) + 1$. Let $(S_j, N_j)$ be a pair of standard neighborhoods of $L^j$ such that $S_j \subset N_j$. We will define $T_j=N_j\setminus (i_j \cdot S_j)$, where $1\cdot S_j=S_j$ and $0 \cdot S_j=\emptyset$. Observe that if $i_j=1$, then $(T_j=\T^2\times[0,1],\xi_j)$ is a Legendrian circle bundle over an annulus and if $i_j=0$ then $T_j = N_j$. 

If $(p_j,q_j) \neq (0,0)$, we realize the cable component $L^j_{p_j,q_j}$ on $\bd N_j$. The complement $\Op(L^j_{p_j,q_j})\cap \bd N_j$ is given by $m_j=\operatorname{gcd}(p_j,q_j)$ of $n_j$-standard annuli for $$n_j=\frac{|p_j\tb(L^j)-q_j|}{m_j}> 1$$ (notice that $n_j > 1$ since $q_j/p_j > \tb(L^j) + 1$).

The complement $(C(L^B_C),\xi)$ is obtained by gluing every $(T_j,\xi)$ to $(C(L),\xi)$ along these $n_j$-standard annuli (if $(p_j,q_j) = (0,0)$, we need not do anything). Therefore, Theorem \ref{thm:GluingAnnuli} applies and there is a homotopy equivalence 
\[
  \mathcal{C}(C(L^B_C),\xi)\cong \mathcal{C}(C(L),\xi)\times \mathcal{C}(T_1,\xi_1)\times\cdots\times\mathcal{C}(T_n,\xi_n).
\] 
Also, $\mathcal{C}(T_j,\xi_j)$ is contractible by Theorem \ref{thm:StandardTightHandlebody} if $i_j=0$ and by \cite[Theorem~1.1.4]{FMP:unknot} if $i_j=1$. This completes the proof. \qed

\subsection{Proof of Theorem \ref{thm:cable}}

The $\pi_0$ case follows from Theorem \ref{thm:LegCables-pi0}. Therefore, in view of Lemma \ref{lem:FramingKnots} and \ref{lem:FramingLegendrians} it is enough to show the existence of a homotopy equivalence 
\[ \Phi: \Cont(C(L),\xi_\std)\times\Z\rightarrow \Cont(C(L_{p,q}),\xi_\std) \] 
to conclude the proof of Theorem \ref{thm:cable}. Here, $L\subseteq (\NS^3,\xi_\std)$ is the image of a Legendrian embedding in $\L^{\tbb}_{(p_0,v_0)}(K)$ and $L_{p,q}$ is a standard Legendrian cable of $L$, which maximizes the $\tb$ number (Theorem \ref{thm:MaxTBCableItsStandard}). Notice that $C(L_{p,q}) = C(L) \cup C_{p,q}$ where $C_{p,q}$ is a cable space, a Seifert fibration over an annulus with one singular fiber. We claim that for sufficiently positive cables, $(C_{p,q},\xi_\std)$ is a Legendrian Seifert fibration (see Definition~\ref{def:lsfs}). To see this, notice that the complement of a Legendrian positive torus knot with the maximal Thurston--Bennequin invariant in $(\NS^3,\xi_\std)$ is a Legendrian Seifert fibration over a disk with two singular fibers, see Proposition~\ref{prop:complement-torus}. If we remove a neighborhood of one singular fiber, we obtain a Legendrian Seifert fibration over an annulus with one singular fiber. Clearly this corresponds to the cable space of a sufficiently positive cable of the Legendrian unknot $U$ with the maximal Thurston--Bennequin invariant in $(\NS^3,\xi_\std)$, which is contactomorphic to the complement of a standard neighborhood of a ruling curve in the interior of a standard neighborhood of $U$. Moreover, this is contactomorphic to the cable space of a sufficiently positive cable of any knot type and this concludes the claim. Thus by Theorem~\ref{thm:lsfs-cmcg}, we have 
\[
  \pi_0(\Cont(C_{p,q},\xi_\std)) = \pi_0(\Diff(\Sigma^0_{2,1})) = \Z \times \Z 
\] 
which is generated by the lifts of Dehn twists along two circles parallel to $\bd \Sigma^0_{2,1}$. If we do not fix one boundary component of $C_{p,q}$, then it will be an infinite cyclic group. We define the required map as 
\begin{align*}
  \Phi:\Cont(C(L),\xi_\std)) \times \Z &\to \Cont(C(L_{p,q}),\xi_\std)\\
  (f,n) &\mapsto f \cup_{T} \tau^n
\end{align*}
where $\tau$ is a contact torus twist along a torus parallel to $\bd C(L_{p,q})$ and $T = \bd C(L)$. It remains to check that $\Phi$ is a homotopy equivalence. 

First we prove that $\Phi$ induces an isomorphism on $\pi_0$. To see the surjectivity, consider $\phi \in \Cont(C(L_{p,q}),\xi_\std)$. We will show that $\phi$ is contact isotopic to $\Phi(f,n)=f\cup_T \tau^n$ for some $f \in \Cont(C(L),\xi_\std)$ and $n \in \Z$. Notice that there exists a smooth isotopy $\phi_t\in\Diff(C(K))$, $t\in[0,1]$, so that $\phi_1=\phi$ and $(\phi_0)_{|\Op(T)}=\Id_{|\Op(T)}$ (\em c.f.~\cite{Budney:knotsS3}\em). Parametrize the torus $T$ by $i:T^2\hookrightarrow C(L_{p,q})$ in such a way that $i_{\NS^1\times\{\theta\}}$ is a Legendrian fiber of $(C_{p,q},\xi_\std)$ for every $\theta\in\NS^1$, so that $i$ is a standard embedding. Since $\phi$ is a contactomorphism it follows that $\phi\circ i$ is also standard and, moreover, smoothly isotopic to $i=\phi_0 \circ i$ in $C(L_{p,q})$ via $i_t=\phi_t\circ i$, $t\in[0,1]$. We should prove that this isotopy could be upgraded to an isotopy through standard embeddings. Since $C_{p,q}$ is a Legendrian Seifert fibration, we can choose Legendrian fibers $F_1=i(\NS^1\times\{\theta_0\})$ on $T$ and $F_2$ on $\bd C(L_{p,q})$. There is an $n$-standard convex annulus $A$ conformed by Legendrian fibers such that $\bd A = F_1 \cup F_2$, so $F_1$ and $F_2$ are Legendrian isotopic. Now consider the smooth isotopy of annuli $\phi_t(A)$, $t\in[0,1]$, note that $\phi_1(A)$ is standard because $\phi=\phi_1$ is a contactomorphism. Since $\bd \phi_t(A) = \phi_t(F_1) \cup \phi_t(F_2)=\phi_t(F_1)\cup F_2$, we conclude that $\phi_1(F_1)=\phi(F_1)$, $F_2$ and $F_1$ are Legendrian isotopic. Moreover, by using the ``ruling'' curves of the annuli $\phi_t(A)$, we can isotope the family of fibers $\phi_t(F_1)$ to the Legendrian fiber $F_1$ in such a way that for $\phi_1(F_1)$ the isotopy is through Legendrian curves. Therefore, we can assume, after a possible contact isotopy, that $\phi\circ i$ and $i$ coincide near $\NS^1\times\{\theta_0\}$, i.e. the fiber $F_1$ is fixed by $\phi$; and, moreover, are smoothly isotopic relative to $\NS^1\times\{\theta_0\}$. Since $L_{p,q}$ is a sufficiently positive cable, we know $n>1$ so we can apply Theorem~\ref{thm:StandardAnnuli} and conclude $\phi\circ i$ and $i$ are contact isotopic. Thus we can assume $\phi$ fixes $T = \bd C(L)$ point-wise so $\phi$ decomposes into $f \cup_T \tau^n$ for some $f \in \Cont(C(L),\xi_\std)$ and $n \in \Z$.  We leave the injectivity part as an exercise for readers since the argument is essentially the same. 

To see that $\Phi$ induces an isomorphism on higher homotopy groups consider the following diagram: 
\begin{displaymath} 
  \xymatrix@M=10pt{
    \Cont_0(C(L),\xi_\std)  \ar@{^{(}->}[d]\ar[r]  & \Diff_0(C(L)) \ar[r] \ar@{^{(}->}[d] & \mathcal{C}(C(L),\xi_\std)   \ar@{^{(}->}[d] \\
    \Cont_0(C(L_{p,q}),\xi_\std) \ar[r] & \Diff_0(C(L_{p,q})) \ar[r] &  \mathcal{C}(C(L_{p,q}),\xi_\std) }
\end{displaymath}
The rows are from Gray fibration and the vertical arrows are induced by the inclusion $i\colon C(L) \hookrightarrow C(L_{p,q}) = C(L) \cup C_{p,q}$. The last two vertical arrows are homotopy equivalences by Budney \cite[Theorem~2.3]{Budney:knotsS3} and Theorem~\ref{thm:SufficientlyPositiveLinkCables}, respectively. Thus by the Five Lemma, the first arrow is also a homotopy equivalence. This completes the proof. \qed

\subsection{Proof of Corollary \ref{cor:IteratedTorus}}
We start proving (ii). As explained in \cite{Budney:knotsS3} the space of long smooth $n$-iterated torus knots has the homotopy type of $(\NS^1)^n$. This follows by applying $n$ times the smooth version of the recursive formula in Theorem \ref{thm:cable} to the space of long smooth unknots, which is contractible by Hatcher \cite{Hatcher:Smale}. Every sufficiently positive $n$-iterated Legendrian torus knot with the maximal $\tb$ number is obtained from the Legendrian unknot with $\tb=-1$ by an application of Theorem \ref{thm:LegCables-pi0} combined with Eliashberg-Fraser result \cite{EliashbergFraser:unknot}. Therefore, since the space of long Legendrian unknots with $\tb=-1$ is homotopy equivalent to its smooth counterpart by \cite{FMP:unknot}, Theorem \ref{thm:cable} readily implies that the inclusion 
\[ 
  i:\L^{\tbb}_{(p_0,v_0)}(K)\hookrightarrow \K_{(p_0,v_0)}(K) 
\]
is a homotopy equivalence for every sufficiently positive $n$-iterated torus knot $K$. The homotopy equivalence stated in (ii) now follows from (\ref{eq:LongHomotopyEquivalence}). 

To prove (i) observe that the homotopy fibers of the inclusion $i$ above and the inclusion 
\[ 
  j:\L^{\tbb}_{(p_0,v_0)}(K,(M,\xi))\hookrightarrow\K_{(p_0,v_0)}(K,M)
\] 
are homotopy equivalent by an application of Proposition \ref{prop:FromS3ToM3} (see the Remark below the Proposition). In particular, $\operatorname{Hofiber}(j)$ is contractible. This, combined with Theorem \ref{thm:LegCables-pi0} and Eliashberg-Fraser result again, implies the result. \qed

\section{Legendrian Seifert fibered spaces}\label{sec:lsfs}

In this section, we define a \dfn{Legendrian Seifert fibered space} and determine the homotopy type of the group of contactomorphisms of any Legendrian Seifert fibration over a surface with boundary.

\subsection{Definitions and properties}\label{subsec:lsfs-def}
First, we review the basics of Seifert fibered spaces briefly. Let $\Sigma^g_n$ be a genus $g$ surface with $n$ boundary components and $\Sigma^g_{n,m}$ a genus $g$ surface with $n$ boundary components and $m$ punctures. Consider $\Sigma^g_n \times \NS^1$. For a fiber $F = \{p\} \times \NS^1$, we define the \dfn{vertical framing} of $F$ to be the product framing of $\Sigma^g_n \times \NS^1$. Then we define a Seifert fibered space $M(\Sigma^g_n,r_1,\dots,r_m)$ to be the result of Dehn $r_i$-surgery on $m$ fibers of $\Sigma^g_n \times \NS^1$ with respect to the vertical framing. Each surgery dual knot is called a \dfn{singular fiber} of the Seifert fibration. Notice that the complement of all singular fibers is a circle bundle over $\Sigma^g_{n,m}$ and each fiber of this circle bundle is called a \dfn{regular fiber} of the Seifert fibration. 

\begin{definition}\label{def:lsfs}
  A contact manifold $(M,\xi)$ is a \dfn{Legendrian Seifert fibered space} if $M$ is a Seifert fibration and all regular and singular fibers are Legendrian. Also, let $t$ be the twisting number of a Legendrian regular fiber with respect to the vertical framing. We call this $t$ the \dfn{twisting number} of the Legendrian Seifert fibered space. 
\end{definition}

Notice that if $(M,\xi)$ is a Legendrian Seifert fibration over $\Sigma^g_n$ with $m$ singular fibers, then the complement of the singular fibers is a Legendrian circle bundle over $\Sigma^g_{n,m}$. The next proposition verifies that the twisting number of a Legendrian Seifert fibration is well-defined.

\begin{proposition}\label{prop:lsfs-tw}
  Let $(M,\xi)$ be a Legendrian Seifert fibration. Then a Legendrian regular fiber has the maximal twisting number in the smooth isotopy class of the regular fibers. 
\end{proposition}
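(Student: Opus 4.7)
The plan is to argue by contradiction: suppose a Legendrian $F'$ smoothly isotopic to a regular fiber $F$ has twisting $\tw(F')>t$ with respect to the vertical framing $\lambda_v$. I would first establish a standard convex-torus picture near $F$. Using the Legendrian Seifert fibration structure, I can find a fibered tubular neighborhood $V\cong D^2\times S^1$ of $F$ with contact form modeled by $\alpha=\cos(2\pi t z)\,dx+\sin(2\pi t z)\,dy$, so that every fiber $\{(x,y)\}\times S^1$ is Legendrian with twisting $t$. A direct computation shows that each concentric fibered torus $\{x^2+y^2=r^2\}\times S^1$ has characteristic foliation whose singular circles realize the homology class $t\mu+\lambda_v$; after an arbitrarily small $C^\infty$-perturbation it becomes a standard convex torus with two dividing curves of slope $t$ in the basis $(\mu,\lambda_v)$. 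Consequently, each $V_r$ (suitably perturbed) is a tight solid-torus neighborhood of $F$ whose convex boundary has dividing slope $t$.

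Next, I would examine $F'$: a standard Legendrian neighborhood $N(F')$ has convex boundary with dividing slope $\tw(F')$ in the basis $(\mu_{F'},\lambda'_v)$, which is identified with $(\mu,\lambda_v)$ via the smooth isotopy taking $F'$ to $F$. Applying the isotopy discretization theorem (Theorem~\ref{thm:discretization}) to a smooth isotopy from $\partial N(F')$ to a convex fibered torus from the first step, and using Giroux's tightness criterion (Theorem~\ref{thm:criterion}) together with Lemma~\ref{lem:trivial} to rule out non-effective bypasses producing contractible dividing curves, I would track the evolution of the dividing slope through the resulting sequence of bypass attachments. Walking along the appropriate path in the Farey graph from slope $\tw(F')$ to slope $t$, I would produce a convex torus $T$ bounding a tight solid torus $W\subset M$ with $F'\subset W$ and $\partial W$ of dividing slope $t$. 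The contradiction then comes from the standard classification of Legendrian knots in tight solid tori (Honda \cite{Honda:classification1}, Etnyre--Honda \cite{EtnyreHonda:UTP}): the maximum twisting of a Legendrian representative of the core of such a $W$ equals the boundary dividing slope $t$, which forces $\tw(F')\leq t$ against our standing assumption.

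The main obstacle is the construction of the convex torus $T$ of slope $t$ that still contains $F'$ in the second step. The Farey-path bypass argument requires that the needed bypasses be available at each step and that $F'$ does not escape the thickening, and this is precisely where the Legendrian Seifert fibration structure is used crucially: convex fibered tori of slope $t$ are available near every regular fiber, and a convex-annulus analysis between $F'$ and a nearby parallel Legendrian regular fiber $F_1$ — for which one computes $|\Gamma_A\cap F'|=-2\tw(F')<-2t=|\Gamma_A\cap F_1|$ — supplies the boundary-parallel dividing arcs on the $F_1$ side from which the needed bypasses for the thickening of $N(F')$ can be extracted.
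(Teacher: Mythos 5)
Your route is genuinely different from the paper's, and it has gaps I do not see how to close. The paper's proof is a two-line reduction: after a small perturbation the competing Legendrian $L$ is disjoint from the finitely many singular fibers, hence lies in the Legendrian circle bundle over the punctured base, and one then invokes Giroux's result (Lemma~3.6 of \cite{Giroux:bundles}) that Legendrian fibers of a Legendrian circle bundle maximize the twisting number in their smooth isotopy class. You instead try to reprove this maximality from scratch by convex surface theory, which amounts to reproving Giroux's lemma (plus handling the singular fibers, which your sketch never addresses), and the key steps do not work as stated.

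Concretely: (1) two disjoint smoothly isotopic knots need not cobound an embedded annulus, so the convex annulus $A$ between $F'$ and a ``nearby parallel'' regular fiber $F_1$ is not available unless you first place $F'$ inside a fibered neighborhood of $F_1$ as an isotopic core, which is not given; and the identification of the framing induced by $A$ with the vertical framing, needed for $|\Gamma_A\cap F'|=-2\tw(F')$, is unjustified. (2) The imbalance $|\Gamma_A\cap F'|<|\Gamma_A\cap F_1|$ produces boundary-parallel dividing arcs adjacent to $F_1$, hence bypasses attached to $N(F_1)$ from the outside; these thicken $N(F_1)$ and push \emph{its} boundary slope toward $\tw(F')$. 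They do not thicken $N(F')$, so they do not produce the slope-$t$ solid torus $W\supset F'$ that your final contradiction needs; what they produce is a neighborhood of the regular fiber $F_1$ with twisting exceeding $t$, and ruling that out is exactly the statement being proved, so the argument is circular. The ingredient that breaks this circle is a global fact about the circle bundle (Giroux's lemma, which rests on universal tightness of these structures), for which your local Farey/bypass analysis supplies no substitute. (3) Even granting $W$, the claim that the maximal twisting of a Legendrian core of a tight solid torus equals the boundary dividing slope requires $W$ to be a standard neighborhood with two dividing curves and the correct framing identification, and requires $F'$ to be isotopic to the core inside $W$; neither is established. I would recommend replacing all of this with the paper's reduction to the Legendrian circle bundle case.
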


\begin{proof}
  Suppose there is a Legendrian knot $L$ in $(M,\xi)$ which is smoothly isotopic to a regular fiber with a larger twisting number. After a small perturbation, we can assume that $L$ is disjoint from all singular fibers by transversality. Thus we may assume that $L$ is in the Legendrian circle bundle $(M',\xi')$, the complement of all singular fibers. By \cite[Lemma~3.6]{Giroux:bundles}, Legendrian fibers in a Legendrian circle bundle have the maximal twisting number in the smooth isotopy class, so it is a contradiction. 
\end{proof}

We observe that the contact structure in a neighborhood of a singular fiber is completely determined.

\begin{lemma}\label{lem:lsfs-one-singular}
  A Legendrian Seifert fibration over a disk with one singular fiber is contactomorphic to a standard contact solid torus, i.e. a tight convex solid torus with two longitudinal dividing curves.
\end{lemma}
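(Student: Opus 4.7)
Plan: The singular fiber $S$ is Legendrian by hypothesis. I would first take a standard convex neighborhood $N(S)$ of $S$; this is a solid torus with convex boundary having exactly two dividing curves parallel to $S$ (the contact framing longitudes). The complement $C = M \setminus \interior(N(S))$ is then diffeomorphic to $T^2 \times I$ and inherits from the Seifert data the structure of a Legendrian circle bundle over an annulus, whose fibers are the regular fibers of the original Seifert fibration.

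Next, I would identify the dividing set on $\partial M$ using the bundle structure on $C$. By Proposition \ref{prop:lsfs-tw}, all regular fibers have the same (maximal) twisting number $t$. Given any convex torus $T_r \subset C$ obtained by pushing $\partial N(S)$ radially outward and arranged (via Giroux flexibility) to carry regular fibers as Legendrian rulings, the twisting number formula gives $|T_r \cap \Gamma_{T_r}| \cdot |\text{fiber} \cdot \text{div}| = 2|t|$, where the second factor is the geometric intersection of a single ruling with a single dividing curve. Combined with the $T^2 \times I$ classification of Honda \cite{Honda:classification1} applied to the shell between $\partial N(S)$ and $\partial M$, plus the constraint that fibers remain Legendrian rulings (not divides) throughout $C$, I would argue that the shell must be $I$-invariant: both the slope and the number of dividing curves are constant across all parallel convex tori. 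In particular, $\partial M$ inherits exactly two dividing curves, each parallel to the longitudinal dividing curves on $\partial N(S)$, which are isotopic in $M$ to longitudes of $M$ (parallel to its core $S$).

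Finally, I would invoke Honda's classification of tight contact structures on solid tori \cite{Honda:classification1}: a tight convex solid torus whose boundary dividing set consists of exactly two longitudinal curves is unique up to contactomorphism and is the standard contact solid torus. This yields the claim.

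The main obstacle is the second step: ruling out the possibility that the shell between $\partial N(S)$ and $\partial M$ contains nontrivial basic slices, which would add pairs of dividing curves or change their slope. The Legendrian circle bundle hypothesis is essential here, since it forces regular fibers to remain Legendrian rulings of every intermediate convex torus with twisting number $t$; any basic slice would either contradict this constant twisting or produce a bypass that violates the maximality of $t$ guaranteed by Proposition \ref{prop:lsfs-tw}.
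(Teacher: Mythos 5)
Your overall strategy (establish that $M$ is a tight solid torus whose boundary has two longitudinal dividing curves, then quote the uniqueness of such a tight contact structure) is the right target, but the step that is supposed to deliver the dividing set of $\partial M$ --- the claim that the shell between $\partial N(S)$ and $\partial M$ is $I$-invariant --- is asserted rather than proved, and the justification offered does not close it. The twisting number of a Legendrian ruling of the fiber slope $f$ on a convex torus with $2k$ dividing curves of slope $s$ is $-k\,\Delta(f,s)$, where $\Delta$ is the minimal geometric intersection of the two slopes; so the constancy of the fiber twisting only pins down the \emph{product} $k\,\Delta(f,s)=|t|$, not $k$ and $s$ separately. Nothing in your argument rules out, say, $\partial N(S)$ having $2$ dividing curves with $\Delta=|t|$ while $\partial M$ has $2|t|$ dividing curves each meeting a fiber once: both are compatible with all fibers being Legendrian rulings of twisting $t$, yet the shell is then very far from $I$-invariant and the conclusion fails. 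Likewise, a basic slice whose Farey interval of slopes stays in the region $\Delta(f,s)\ge|t|$ produces no Legendrian isotopic to a fiber with excessive twisting, so ``any basic slice violates maximality of $t$'' is not true as stated. Two further points are left hanging: you never prove that $(M,\xi)$ is tight (needed both for Honda's $T^2\times I$ and solid-torus classifications), and a standard convex neighborhood $N(S)$ of the Legendrian singular fiber need not be saturated by fibers, so its complement is not automatically a Legendrian circle bundle over an annulus.

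The paper's proof supplies exactly the missing ingredients, by different means. Tightness: if $\xi$ were overtwisted, every smooth knot type would admit Legendrian representatives with arbitrary contact framing, contradicting the maximality of the fiber twisting (Proposition~\ref{prop:lsfs-tw}). Exactly two dividing curves on $\partial M$: if there were $2k>2$, the structure theory of tight solid tori \cite{Honda:classification1} produces an inner convex torus around the singular fiber with the \emph{same} dividing slope but only two dividing curves; its ruling curves of the fiber slope are smoothly isotopic to regular fibers and have twisting $-\Delta>-k\Delta=t$, again contradicting Proposition~\ref{prop:lsfs-tw}. Longitudinality: if the dividing slope were not longitudinal, a suitable finite cyclic cover (still a Legendrian Seifert fibration over a disk with one singular fiber) would have more than two boundary dividing curves, contradicting the previous step. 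You would need to incorporate arguments of this kind --- in particular some comparison between $\partial M$ and an interior convex torus of the same slope --- before the $I$-invariance of the shell, and hence the lemma, follows.
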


\begin{proof}
  Let $(M,\xi)$ be a Legendrian Seifert fibration over a disk with one singular fiber. Since $M$ is diffeomorphic to a solid torus, $(M,\xi)$ is a contact solid torus with convex boundary. We first show that $(M,\xi)$ is tight. Recall that in an overtwisted contact structure, any smooth knot type admits a Legendrian representative with any contact framing. By Proposition~\ref{prop:lsfs-tw}, however, the regular fibers have the maximal twisting number, so $\xi$ must be tight. Next we claim that the boundary of $(M,\xi)$ has two dividing curves. Assume $(M,\xi)$ has more than two dividing curves. Then inside of $M$, we can find a neighborhood of a singular fiber with the same dividing slope but two dividing curves. Then the ruling curves on this neighborhood have a larger twisting number than the regular fibers, which contradicts Proposition~\ref{prop:lsfs-tw}. This completes the claim. 
  
  Now assume that the dividing curves are not longitudes of $M$. Take an $n$-fold covering of $(M,\xi)$ and denote it by $(\widetilde{M},\widetilde{\xi})$. Since the covering operation preserves the fibers, $(\widetilde{M},\widetilde{\xi})$ is also a Legendrian Seifert fibration.  Since the dividing curves are not longitudinal, for sufficiently large $n$ there are more than two dividing curves on $(\widetilde{M},\widetilde{\xi})$. This contradicts the claim above so the dividing curves must be longitudinal.  
\end{proof}

We finish this section by showing that any diffeomorphism of the base (after a perturbation, if necessary) can be lifted to a contactomorphism of a Legendrian Seifert fibration.

\begin{proposition}\label{prop:lift}
  Let $(M,\xi)$ be a Legendrian Seifert fibration over $\Sigma^g_n$ with $m$ singular fibers. For any component of $\Diff(\Sigma^g_{n,m})$, there exists a diffeomorphism $f$ that can be lifted to a contactomorphism $\widetilde{f} \in \Cont(M,\xi)$.
\end{proposition}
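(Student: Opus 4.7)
Write $\pi \colon M \to \Sigma^g_n$ for the Seifert projection. The strategy is to lift a chosen set of generators of $\pi_0(\Diff(\Sigma^g_{n,m}))$. A standard presentation for the mapping class group of a compact orientable surface with boundary and marked points uses Dehn twists along embedded simple closed curves in $\Sigma^g_{n,m}$ together with half-twists exchanging neighboring pairs of marked points corresponding to singular fibers of the same Seifert type; once each generator admits a contactomorphism lift, composing produces a lift of any mapping class.

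For a Dehn twist $\tau_\gamma$ along a simple closed curve $\gamma \subset \Sigma^g_{n,m}$, the plan is to take a tubular annular neighborhood $A$ of $\gamma$ disjoint from the singular points, so that $\pi^{-1}(A)$ is a thickened torus $T^2 \times I$ equipped with a Legendrian circle bundle structure induced by $\pi$. As reviewed in the remark on contact torus twists in Section~\ref{subsec:Pi2Invariant}, such a Legendrian fibration admits a contactomorphism $\Phi \in \Cont(T^2\times I, \xi)$ covering $\tau_\gamma|_A$ and equal to the identity near the two boundary tori. Extending by the identity yields the required $\widetilde{\tau}_\gamma \in \Cont(M,\xi)$. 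For a half-twist $\sigma$ exchanging two punctures $p_1, p_2$ of matching Seifert type along an arc $\alpha$, the plan is to enclose $\alpha$ in a small disk $D \subset \Sigma^g_n$ that meets the singular set only in $p_1$ and $p_2$. Then $\pi^{-1}(D)$ is a Legendrian Seifert fibered piece over $D$ with two singular fibers of identical Seifert invariants. Lemma~\ref{lem:lsfs-one-singular} pins down the contact model near each singular fiber as a standard contact solid torus, which allows us to construct an orientation-preserving involution of $\pi^{-1}(D)$ that swaps the two singular fibers, preserves the contact structure, and covers the order-two rotation of $D$ about the midpoint of $\alpha$. After adjusting near $\partial \pi^{-1}(D)$ by a contact torus twist, extend by the identity to obtain $\widetilde{\sigma} \in \Cont(M,\xi)$.

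The main obstacle is the construction of the involutive contactomorphism on $\pi^{-1}(D)$ in the half-twist case. The subtlety is to show that the contact structure on $\pi^{-1}(D)$ is symmetric enough so that the topological rotation of the base by $180^\circ$ genuinely lifts to a contactomorphism. The plan to resolve this is to decompose $\pi^{-1}(D)$ along a standard convex torus, the preimage of an embedded circle in $D$ separating $p_1$ and $p_2$, apply Lemma~\ref{lem:lsfs-one-singular} on each half to identify both sides with the same standard contact model, and reassemble using the parametric gluing statement of Theorem~\ref{thm:GluingAnnuli} to produce an explicit symmetric model carrying the desired involution. The Dehn twist case, including boundary-parallel twists, is by contrast a direct application of the contact torus twist construction already present in the literature \cite{Giroux:bundles, FMP:unknot}.
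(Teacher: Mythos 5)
Your route is genuinely different from the paper's, and as written it has a gap in the one place you yourself flag as the main obstacle. The paper does not argue generator by generator at all: since the statement only asks for \emph{some} representative $f$ of each component, it isotopes $f$ to be the identity near the punctures, removes small neighborhoods of the singular fibers to obtain a Legendrian circle bundle over $\Sigma^g_{n+m}$, lifts $f$ there using the Giroux--Massot lifting result for Legendrian circle bundles, and extends by the identity over the singular-fiber neighborhoods. This sidesteps singular fibers entirely and is a few lines long.

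Two concrete problems with your version. First, the half-twist generators are not needed: given the paper's conventions (see the proof of Corollary~\ref{cor:lsfs-pure-braid}, which computes $\pi_0(\Diff(\NS^2_{n,m}))$ via the \emph{ordered} configuration space and lands in the pure braid group), the relevant group is the pure mapping class group, whose elements fix each puncture; it is generated by Dehn twists alone. Indeed, for the full mapping class group the statement would generally be false, since a diffeomorphism of the base exchanging two singular points with different Seifert invariants cannot be covered by any fiber-preserving diffeomorphism, let alone a contactomorphism. Second, the half-twist lift you do attempt is not established: Lemma~\ref{lem:lsfs-one-singular} identifies each singular-fiber neighborhood with a standard contact solid torus, but it does not produce a contactomorphism of $\pi^{-1}(D)$ realizing the swap, and Theorem~\ref{thm:GluingAnnuli} is a statement about the homotopy type of spaces of contact structures, not a tool for building an equivariant normal form or an involutive contactomorphism; ``reassemble using the parametric gluing statement'' does not yield the claimed symmetric model. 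Your Dehn twist lifting, by contrast, is sound --- it is exactly the circle-bundle lifting of \cite{GirouxMassot:bundles} localized to $\pi^{-1}(A)$ --- so if you drop the half-twists and invoke generation of the pure mapping class group by Dehn twists, your argument closes; but the paper's direct lift of an arbitrary representative is both simpler and avoids having to quote a generating set.
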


\begin{proof}
  First, remove a small neighborhood of each singular fiber and obtain a Legendrian circle bundle $(M',\xi')$ over $\Sigma^g_{n+m}$. Let $f$ be a diffeomorphism of $\Sigma^g_{n,m}$. After isotopy, we can assume $f$ is the identity near the punctures. Now we can lift $f|_{\Sigma^g_{n+m}}$ to a contactomorphism $\widetilde{f}$ of the Legendrian circle bundle $(M',\xi')$ (\dfn{c.f.} \cite[Corollary~2.8]{GirouxMassot:bundles}). Now extend $\widetilde{f}$ by the identity over the neighborhoods of the singular fibers and we obtain a contactomorphism $\widetilde{f} \in \Cont(M,\xi)$ which is a lift of $f$.
\end{proof}

\subsection{Proof of Theorem \ref{thm:Main-LSB}} 
Here, we will prove Theorem \ref{thm:Main-LSB}. First, we will consider the part (i) of the statement. 

\begin{theorem}\label{thm:lsfs-higher-homotopy}
  Let $(M,\xi)$ be a Legendrian Seifert fibered space over $\Sigma^g_n$ with $m$ singular fibers and $n\geq 1$. If the twisting number of regular fibers is $-1$, then we assume the regular fiber is nontrivial in $\pi_1(M)$.  Then, the space $\mathcal{C}(M,\xi)$ is contractible. 
\end{theorem}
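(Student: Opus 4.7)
The strategy is to decompose $(M,\xi)$ by cutting along preimages of arcs in the base and apply the parametric gluing Theorem \ref{thm:GluingAnnuli} together with the contractibility of contact structures on standard contact handlebodies (Theorem \ref{thm:StandardTightHandlebody}). Since $n \geq 1$, I would choose a collection of pairwise disjoint, properly embedded arcs $\gamma_1, \ldots, \gamma_k$ in $\Sigma^g_n$ with endpoints on $\partial \Sigma^g_n$ and avoiding the $m$ singular points, such that $\Sigma^g_n \setminus \bigcup_i \gamma_i$ is a disjoint union of disks, each containing at most one singular point. Cutting $M$ along the preimages of these arcs will reduce the problem to elementary pieces that are either standard neighborhoods of Legendrian unknots or Legendrian Seifert fibrations over a disk with one singular fiber.

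For each arc $\gamma_i$, the preimage $A_i = \pi^{-1}(\gamma_i) \subset M$ is an annulus with Legendrian boundary on $\partial M$, foliated by Legendrian fibers. By Giroux's flexibility theorem applied parametrically to families of contact structures, one can $C^\infty$-small perturb $\xi$ rel $\Op(\partial M)$ so that each $A_i$ becomes a convex $n_i$-standard annulus, where $n_i = |t|$ and $t \leq -1$ is the twisting number of regular fibers. By Proposition \ref{prop:lsfs-tw}, the boundary fibers of $A_i$ realize the maximal twisting number in $M$. When $t \leq -2$, so that $n_i \geq 2$, the hypotheses of Theorem \ref{thm:GluingAnnuli} are satisfied directly. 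In the edge case $t = -1$, so $n_i = 1$, the additional unwrapping hypothesis will be fulfilled by the assumption that the regular fiber is nontrivial in $\pi_1(M)$: passing to a finite cyclic cover of $M$ associated to a nontrivial finite quotient of $\pi_1(M)$ in which the fiber survives (which exists by residual finiteness of 3-manifold groups) lifts $A_i$ to an annulus that is $k$-standard with $k \geq 2$, and this cover is tight since $M$ is tight.

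Applying Theorem \ref{thm:GluingAnnuli} iteratively along all the $A_i$ will yield a homotopy equivalence
\[
  \mathcal{C}(M,\xi) \simeq \prod_j \mathcal{C}(M_j,\xi_j),
\]
where the $M_j$ are the components of $M$ cut open along the $A_i$. Each $M_j$ is a Legendrian Seifert fibration over a disk with at most one singular fiber. When there is no singular fiber, $(M_j,\xi_j)$ is a standard neighborhood of a Legendrian unknot; when there is one, Lemma \ref{lem:lsfs-one-singular} identifies $(M_j,\xi_j)$ with a standard contact solid torus. In either case Theorem \ref{thm:StandardTightHandlebody} gives $\mathcal{C}(M_j,\xi_j) \simeq \ast$, and therefore $\mathcal{C}(M,\xi)$ is contractible. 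The main obstacle I anticipate is realizing the annuli $A_i$ as $n_i$-standard simultaneously across a family of contact structures via parametric Giroux flexibility, together with the unwrapping trick in the edge case $t = -1$, which is precisely why the hypothesis on the fiber being nontrivial in $\pi_1(M)$ cannot be dropped.
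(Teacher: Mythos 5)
Your proposal follows essentially the same route as the paper: cut $M$ along vertical annuli lying over properly embedded arcs in the base, observe that these are $|t|$-standard with boundary maximizing the twisting number by Proposition~\ref{prop:lsfs-tw}, apply Theorem~\ref{thm:GluingAnnuli} to reduce to Legendrian Seifert fibrations over disks with at most one singular fiber, and finish with Lemma~\ref{lem:lsfs-one-singular} and Theorem~\ref{thm:StandardTightHandlebody}. The only organizational difference is that the paper runs an induction on $g+n$, cutting one arc at a time, whereas you cut along all the arcs simultaneously; this is immaterial.

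One point in your treatment of the edge case $t=-1$ is not correct as stated: you assert that the finite cover is ``tight since $M$ is tight.'' Tightness is \emph{not} in general inherited by covering spaces --- an overtwisted disk upstairs only projects to an immersed disk downstairs, which is precisely why ``universally tight'' is a separate notion. The hypothesis of Theorem~\ref{thm:StandardAnnuli} (and hence of Theorem~\ref{thm:GluingAnnuli}) for $n=1$ explicitly demands a \emph{tight} covering in which the annulus unwraps, so you must justify tightness of the cover by a different argument: here the relevant pieces are Legendrian circle bundles (away from singular fibers), which are universally tight by Giroux's classification, so the needed cover is indeed tight --- but not for the reason you give. Aside from this (and the slightly garbled invocation of residual finiteness, where it would be cleaner to use that the regular fiber generates a normal cyclic subgroup of $\pi_1(M)$), the argument is sound.
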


\begin{proof}
  We will apply Theorem \ref{thm:GluingAnnuli} several times to prove the result. Note that, when the twisting number of a regular fiber is $t=-1$ the result would still apply because we can unwrap the regular fibers by passing to a covering of $M$ and they will still maximize the twisting number (see Theorem \ref{thm:StandardAnnuli}). 
    
  We proceed by induction over $k = g + n$. Assume that the statement is true for $k$. Now we will prove it for $k+1$. Let $\alpha$ be a properly embedded essential arc on $\Sigma^g_{n,m}$ such that $\Sigma^g_{n,m} \setminus \alpha = \Sigma^{g'}_{n',m}$ where $(g',n') = (g-1,n)$ or $(g,n-1)$. Now let $A := \alpha \times \NS^1$, an essential annulus in $M$ and consider the inclusion $j:A \hookrightarrow M$. Since $A$ is fibered by regular fibers, we may assume that it is a $|t|$-standard convex annulus by Lemma~\ref{prop:lsfs-tw}. Now consider the complement of an $I$-invariant neighborhood of $A$
  \[
    (M',\xi') := (M\setminus N(A),\xi).
  \] 
  Here, $(M',\xi')$ is a Legendrian Seifert fibration over $\Sigma^{g'}_{n'}$ with $m$ singular fibers where $g'+n'=k$. In particular, $\mathcal{C}(M',\xi')$ is contractible by hypothesis. Since $(M,\xi)$ is obtained from $(M',\xi')$ by identifying both boundaries of $N(A)$ we may apply Theorem \ref{thm:GluingAnnuli} to conclude that $\mathcal{C}(M,\xi)$ is contractible. 

  It remains to check the statement for $k=1$. In this case, by the assumption in the statement of the theorem, we have $g=0$ and $n=1$. Let $\alpha_1,\ldots,\alpha_{m-1}$ be properly embedded disjoint essential arcs on $\Sigma^0_{1,m}$ such that each component of $\Sigma^0_{1,m} \setminus \bigcup_{i=1}^{m-1} \alpha_i$ contains a single puncture. Let $A_i := \alpha_i \times \NS^1$. Then the complement of $I$-invariant neighborhoods of $A_i$ is  
  \[
    (M \setminus \bigcup_{i=1}^{m-1} N(A_i), \xi) = \bigcup_{i=1}^m (V_i,\xi_i)
  \] 
  where each $(V_i,\xi_i)$ is a Legendrian Seifert fibration over a disk with one singular fiber. It follows again from Theorem \ref{thm:GluingAnnuli} that 
  \[ 
    \mathcal{C}(M,\xi)\cong \mathcal{C}(V_1,\xi_1)\times\cdots\times\mathcal{C}(V_{m-1},\xi_{m-1}).
  \]
  By Lemma~\ref{lem:lsfs-one-singular}, each $(V_i,\xi_i)$ is a standard contact solid torus. Therefore, the result follows from Theorem~\ref{thm:StandardTightHandlebody}.
\end{proof}

It follows from the Gray fibration  
\[
  \Cont_0(M,\xi)\rightarrow \Diff_0(M)\rightarrow \mathcal{C}(M,\xi)
\]
that the inclusion $\Cont_0(M,\xi)\hookrightarrow \Diff_0(M)$ is a homotopy equivalence for every Legendrian Seifert fibration with boundary. It follows then that $\Cont(M,\xi)$ is homotopically discrete since $\Diff_0(M)$ is contractible (for any Seifert fibration $M$ with boundary) by a result of A. Hatcher \cite{Hatcher:fibering}. To prove part (ii) of Theorem \ref{thm:Main-LSB}, it remains to determine the contact mapping class group of $(M,\xi)$.

\begin{theorem}\label{thm:lsfs-cmcg}
  Let $(M,\xi)$ be a Legendrian Seifert fibration over $\Sigma^g_n$ with $m$ singular fibers and $n\geq 1$. If the twisting number of regular fibers is $-1$, then we assume the regular fiber is nontrivial in $\pi_1(M)$. Then  
  \[
    \pi_0(\Cont(M,\xi)) \cong \pi_0(\Diff(\Sigma^g_{n,m})).
  \] 
\end{theorem}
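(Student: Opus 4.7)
The plan is to combine the contractibility of $\mathcal{C}(M,\xi)$ established in Theorem~\ref{thm:lsfs-higher-homotopy} with Hatcher's classical computation of the smooth mapping class group of Seifert fibered $3$-manifolds with boundary, and with Proposition~\ref{prop:lift}. As already observed in the discussion preceding the statement, the Gray fibration
\[
\Cont_0(M,\xi) \hookrightarrow \Diff_0(M) \longrightarrow \mathcal{C}(M,\xi),
\]
together with Theorem~\ref{thm:lsfs-higher-homotopy} and Hatcher's contractibility of $\Diff_0(M)$ for Seifert fibered $M$ with boundary \cite{Hatcher:fibering}, shows that $\Cont_0(M,\xi)$ is contractible. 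In particular, two contactomorphisms that are smoothly isotopic are already contact isotopic, so the inclusion induces an \emph{injection}
\[
\pi_0(\Cont(M,\xi)) \hookrightarrow \pi_0(\Diff(M)).
\]

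Next, I will invoke Hatcher's theorem on the mapping class group of a Seifert fibered $3$-manifold with non-empty boundary. Under our hypotheses ($n \geq 1$, and when the twisting number equals $-1$ the regular fiber is nontrivial in $\pi_1(M)$, which excludes the pathological small Seifert and solid-torus configurations), every diffeomorphism of $M$ is smoothly isotopic to a fiber-preserving one, and the resulting projection onto the base gives a natural identification
\[
\pi_0(\Diff(M)) \;\cong\; \pi_0(\Diff(\Sigma^g_{n,m})).
\]

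Finally, Proposition~\ref{prop:lift} asserts that every component of $\Diff(\Sigma^g_{n,m})$ has a representative that lifts to an element of $\Cont(M,\xi)$. Hence the composition
\[
\pi_0(\Cont(M,\xi)) \hookrightarrow \pi_0(\Diff(M)) \stackrel{\cong}{\longrightarrow} \pi_0(\Diff(\Sigma^g_{n,m}))
\]
is also surjective, and the theorem follows.

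The main obstacle I foresee is the careful invocation of Hatcher's theorem in the second step: one must match the conventions for $\Diff(\Sigma^g_{n,m})$ used in the paper (diffeomorphisms that are the identity near the boundary, with marked points corresponding to singular fibers) and confirm that our hypotheses exclude every degenerate case in which vertical twists along interior tori would contribute an extra factor to the kernel of the projection from fiber-preserving diffeomorphisms. A fully self-contained alternative, avoiding Hatcher, would define the map $[f] \mapsto [\tilde{f}]$ directly via Proposition~\ref{prop:lift} and verify injectivity by hand: decompose $(M,\xi)$ along a maximal system of essential vertical annuli as in the proof of Theorem~\ref{thm:lsfs-higher-homotopy}, and combine the rigidity statement of Proposition~\ref{prop:rigidityContacto} with Theorem~\ref{thm:StandardAnnuli} to extract a base isotopy from any contact isotopy between two lifts.
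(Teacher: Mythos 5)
Your first step is sound: contractibility (in particular simple connectivity) of $\mathcal{C}(M,\xi)$ from Theorem~\ref{thm:lsfs-higher-homotopy} and the Gray fibration do give that $\pi_0(\Cont(M,\xi))\to\pi_0(\Diff(M))$ is injective. The gap is in the second step: the claimed identification $\pi_0(\Diff(M))\cong\pi_0(\Diff(\Sigma^g_{n,m}))$ is false, and the hypotheses of the theorem do not rescue it. What Waldhausen--Hatcher give is a \emph{surjection} $\pi_0(\Diff(M))\to\pi_0(\Diff(\Sigma^g_{n,m}))$ whose kernel is generated by vertical twists, i.e.\ Dehn twists in the fiber direction along vertical annuli and tori; this kernel is (roughly) $H^1(\Sigma^g_n,\partial\Sigma^g_n;\Z)\cong\Z^{2g+n-1}$ and has nothing to do with the twisting number of the fibers. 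The simplest instance already occurs in this paper: for $M=T^2\times[0,1]$, a Legendrian circle bundle over the annulus $\Sigma^0_{2,0}$, one has $\pi_0(\Diff(M))\cong\Z\oplus\Z$ (meridional and longitudinal twists, as recalled in the remark of Section~3.3), while $\pi_0(\Diff(\Sigma^0_{2,0}))\cong\Z$. Here the contact mapping class group is the $\Z$ generated by the contact torus twist, sitting inside $\Z\oplus\Z$ as the diagonal $(1,1)$ --- injective but far from surjective onto $\pi_0(\Diff(M))$.

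Because of this, your concluding syllogism breaks down. You have an injection $\iota\colon\pi_0(\Cont(M,\xi))\hookrightarrow\pi_0(\Diff(M))$, a surjection $p\colon\pi_0(\Diff(M))\twoheadrightarrow\pi_0(\Diff(\Sigma^g_{n,m}))$ with nontrivial kernel $V$ of vertical twists, and (by Proposition~\ref{prop:lift}) a splitting showing $p\circ\iota$ is surjective; but injectivity of $p\circ\iota$ requires showing $\iota\bigl(\pi_0(\Cont(M,\xi))\bigr)\cap V=\{1\}$, i.e.\ that a contactomorphism which is smoothly isotopic to a fiber-preserving diffeomorphism over the identity of the base is contact isotopic to the identity. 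That is exactly the hard, contact-geometric content of the theorem, and it is what the paper's proof establishes by cutting $(M,\xi)$ along a maximal system of $|t|$-standard vertical annuli and invoking Theorem~\ref{thm:StandardAnnuli}, Proposition~\ref{prop:rigidityContacto}, Lemma~\ref{lem:lsfs-one-singular} and Theorem~\ref{thm:StandardTightHandlebody}. Your closing remark sketches precisely this decomposition as an ``alternative,'' but it is not an alternative: it is the missing core of the argument, and as written your main line of proof does not go through.
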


\begin{proof}
  By Proposition~\ref{prop:lift}, we can lift a diffeomorphism of $\Sigma^g_{n,m}$ to a contactomorphism of $(M,\xi)$ and it induces a well-defined map $\pi_0(\Diff(\Sigma^g_{n,m})) \to \pi_0(\Cont(M,\xi))$ in the $\pi_0$-level. We will show this map is an isomorphism.  

  We first deal with injectivity. Notice that the induced map
  $i_*\colon \pi_1(\Sigma^g_{n,m}) \to \pi_1(M)$ from the inclusion is injective. This implies that if $f \in \Diff(\Sigma^g_{n,m})$ acts nontrivially on $\pi_1(\Sigma^g_{n,m})$ then its lift $\widetilde{f}$ also acts nontrivially on $\pi_1(M)$. Thus if $f$ is not smoothly isotopic to the identity, then its lift also cannot be smoothly isotopic to the identity, which implies injectivity.  

  Now we consider surjectivity. Let $\phi \in \Cont(M,\xi)$ be a contactomorphism of $(M,\xi)$. By the work of Waldhausen \cite{Waldhausen:SFS} (see also Hatcher \cite{Hatcher:fibering}), $\phi$ is smoothly isotopic to a fiber preserving diffeomorphism over some $f \in \Diff(\Sigma^g_{n,m})$. Let $\widetilde{f} \in \Cont(M,\xi)$ be the lift of $f$ by Proposition~\ref{prop:lift}. We claim that $\psi := \phi^{-1} \circ \widetilde{f}$ is contact isotopic to the identity. This will imply that $\phi$ is contact isotopic to $\widetilde{f}$ and will complete the proof. Let $k: = 2g+n+m-2$ and $\alpha_1, \ldots, \alpha_k$ be properly embedded essential arcs on $\Sigma^g_{n,m}$ such that 
  \[
    \Sigma^g_{n,m} \setminus \bigcup_{i=1}^k \alpha_i = \bigcup_{i=1}^m \Sigma^0_{1,1},
  \]
  a disjoint union of $m$ disks with a single puncture. Let $A_i := \alpha_i \times \NS^1$ be properly embedded essential annuli in $(M,\xi)$. Since they are fibered by Legendrian regular fibers, we may assume that they are $|t|$-standard convex annuli by Proposition~\ref{prop:lsfs-tw}. Also, $\psi$ is smoothly isotopic to a fiber preserving diffeomorphism over the identity of $\Sigma^g_{n,m}$ and this implies that $\psi(A_i)$ is smoothly isotopic to $A_i$ setwise. Moreover, since $t \leq -2$, $\psi(A_i)$ is contact isotopic to $A_i$ setwise by Theorem~\ref{thm:StandardAnnuli}. Thus we may assume that $\psi$ fixes $A_i$ setwise. By Proposition~\ref{prop:rigidityContacto}, we may assume that $\psi|_{N(A_i)}$ is the identity where $N(A_i)$ is an $I$-invariant neighborhood of $A_i$. Also, the complement of the union of $N(A_i)$ in $(M,\xi)$ is 
  \[
    (M,\xi) \setminus \bigcup_{i=1}^k N(A_i) = \bigcup_{i=1}^m (V_i,\xi_i),
  \]  
  where each $(V_i,\xi_i)$ is a Legendrian Seifert fibration over a disk with one singular fiber. By Lemma~\ref{lem:lsfs-one-singular}, each $(V_i,\xi_i)$ is a standard contact solid torus and by Theorem~\ref{thm:StandardTightHandlebody}, $\psi|_{V_i}$ is contact isotopic to the identity. Therefore, $\psi$ is contact isotopic to the identity and this completes the claim. 
\end{proof}

For later use, we explicitly determine the group of contactomorphisms of a Legendrian Seifert fibration over a disk.

\begin{corollary}\label{cor:lsfs-pure-braid}
  Let $(M,\xi)$ be a Legendrian Seifert fibration over $\NS^2_{n,m}$ with $n\geq1$. Then
   $$ \Cont(M,\xi) \cong \PB_{n+m-1} \times \Z^{n-1} $$
\end{corollary}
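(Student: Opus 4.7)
The plan is to reduce the corollary to a classical mapping-class-group computation by invoking the theorem just proved. By Theorem~\ref{thm:lsfs-cmcg} there is a group isomorphism $\pi_0(\Cont(M,\xi)) \cong \pi_0(\Diff(\NS^2_{n,m}))$, and since $\Cont(M,\xi)$ is homotopically discrete by Theorem~\ref{thm:Main-LSB}(ii), it suffices to identify the mapping class group of the base as $\PB_{n+m-1} \times \Z^{n-1}$. Note that the twisting-number hypothesis of Theorem~\ref{thm:Main-LSB} is harmless here because $n \geq 1$ forces the regular fiber to be nontrivial in $\pi_1(M)$ whenever the Seifert fibration is Legendrian (the base has nonempty boundary, so $M$ deformation retracts onto a Legendrian circle bundle over a surface with nonempty boundary).

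Because $n \geq 1$, I plan to view $\NS^2_{n,m}$ as a disk with $n-1$ interior holes and $m$ marked points. Capping off each interior boundary component by gluing in a once-punctured disk (with the new puncture declared a marked point) gives a forgetful homomorphism
$$\Phi \colon \pi_0(\Diff(\NS^2_{n,m})) \longrightarrow \pi_0(\Diff(\NS^2_{1,n+m-1})) \cong \PB_{n+m-1},$$
where the last isomorphism is the classical identification of the mapping class group of a disk with $k$ ordered punctures (and boundary fixed pointwise) with the pure braid group on $k$ strands. By iterated application of the Birman capping exact sequence, the kernel of $\Phi$ will be generated by the $n-1$ Dehn twists $\tau_1, \dots, \tau_{n-1}$ about curves parallel to the interior boundary components.

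Each $\tau_j$ can be supported in an arbitrarily thin annular collar of its boundary component, and every mapping class admits a representative disjoint from these collars; consequently the $\tau_j$ mutually commute and are central in $\pi_0(\Diff(\NS^2_{n,m}))$. Their independence can be detected by the action on isotopy classes of properly embedded arcs joining the outer boundary to each interior boundary. The resulting central extension
$$1 \to \Z^{n-1} \to \pi_0(\Diff(\NS^2_{n,m})) \to \PB_{n+m-1} \to 1$$
then admits a splitting, giving the direct product decomposition claimed. The argument is entirely standard $2$-dimensional topology once Theorem~\ref{thm:lsfs-cmcg} is in hand; the only mild task is to invoke the correct capping sequence for mapping class groups of planar surfaces with boundary and marked points, so I do not expect any serious obstacle.
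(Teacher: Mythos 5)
Your reduction to the mapping class group of the base is the same as the paper's: both proofs quote Theorem~\ref{thm:Main-LSB} and then compute $\pi_0(\Diff(\NS^2_{n,m}))$. Where you diverge is in that computation. The paper views $\NS^2_{n,m}$ as $\D^2_{n-1,m}$ and runs the evaluation fibration $\Diff(\D^2_{n-1,m})\hookrightarrow\Diff(\D^2)\rightarrow C_{n+m-1}(\D^2)\times\GL^+(2,\R)^{n-1}$, where the $n-1$ holes are recorded as framed points; since $\Diff(\D^2)$ is contractible (Smale) and the base is a $K(G,1)$ with $G=\PB_{n+m-1}\times\Z^{n-1}$, the answer --- including the direct product structure and the homotopy discreteness of $\Diff(\NS^2_{n,m})$ --- falls out of the long exact sequence in one step, with the $\Z^{n-1}$ visibly coming from $\pi_1(\GL^+(2,\R))^{n-1}$. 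You instead use the Birman capping sequence, which is a perfectly viable alternative and arguably more elementary.

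The one place your write-up falls short of a proof is the final sentence: you assert that the central extension $1\to\Z^{n-1}\to\pi_0(\Diff(\NS^2_{n,m}))\to\PB_{n+m-1}\to 1$ ``admits a splitting'' without saying why. This is genuinely the crux of the direct-product claim --- central extensions of $\PB_{n+m-1}$ by $\Z^{n-1}$ are classified by $H^2(\PB_{n+m-1};\Z^{n-1})$, which is nonzero, so splitting is not automatic and must be exhibited. The standard fix is to produce $n-1$ ``winding number'' (framing) homomorphisms $w_j\colon\pi_0(\Diff(\NS^2_{n,m}))\to\Z$ with $w_j(\tau_i)=\delta_{ij}$, e.g.\ by capping each hole with a disk carrying a \emph{framed} marked point rather than a bare one; then $g\mapsto(\Phi(g),w_1(g),\dots,w_{n-1}(g))$ is the desired isomorphism. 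But tracking framed points in a configuration space of the disk is exactly the paper's fibration, so completing your argument honestly more or less reproduces their proof at this step. Everything else you say (the identification of the capped mapping class group with $\PB_{n+m-1}$, centrality of the boundary twists, their independence detected on arcs, and the remark that the twisting-number hypothesis of Theorem~\ref{thm:Main-LSB} is satisfied here) is fine.
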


\begin{proof}
  In view  of Theorem \ref{thm:Main-LSB} it is enough to check that $\Diff(\NS^2_{n,m})$ is homotopy equivalent to $\PB_{n+m}\times \Z^{n-1}$. Denote by $C_{n+m-1}(\D^2)$ the ordered configuration space of $n+m-1$ points in $\D^2$, and by the projection over the first $n-1$ points $\pi:C_{n+m-1}(\D^2)\rightarrow (\D^2)^{n-1}, (b_1,\ldots,b_{n+m-1})\mapsto (b_1,\ldots,b_{n-1})$. Consider the total space of the pull-back bundle $\pi^*\Fr^+(\D^2)^{n-1}=C_{n+m-1}(\D^2)\times \GL^+(2,\R)^{n-1}$, where $\Fr^+(\D^2)^{n-1}$ is the product of $n-1$ oriented frame bundles over $\D^2$.

  Consider the fibration $$ \Diff(\NS^2_{n,m})=\Diff(\D^2_{n-1,m})\hookrightarrow \Diff(\D^2)\rightarrow C_{n+m-1}(\D^2)\times \GL^+(2,\R)^{n-1}, $$ induced by evaluation. The group $\Diff(\D^2)$ is contractible by a result of Smale \cite{Smale:DiffeoDisk}. The result follows from observing that $C_{n+m-1}(\D^2)\times \GL^+(2,\R)^{n-1}$ is a $K(G,1)$ space with 
  $$ G=\pi_1(C_{n+m-1}(\D^2))\times\pi_1(\GL^+(2,\R)^{n-1})\cong \PB_{n+m-1}\times \Z^{n-1}. $$
\end{proof}

\section{Legendrian Seifert fibered links}\label{sec:lsflink}

A \em Seifert fibered link \em is a link in $\NS^3$ of which the complement is a Seifert fibered space. Such links were completely classified by Burde and Murasugi \cite{BurdeMurasugi:SFlink} (see also \cite{Budney:SFlink}). Embedding spaces of Seifert fibered links have been recently studied by Havens and Koytcheff \cite{HavensKoytcheff:knots}. In this section, we will classify the Legendrian Seifert fibered links with the maximal Thurston--Bennequin number up to Legendrian isotopy and prove that the complement of standard neighborhoods of these links is a Legendrian Seifert fibration, except when one of the link components is a negative torus knot. Lastly, we will prove Theorem \ref{thm:main-SFlinkS3}.

\subsection{Torus links}\label{subsec:torus}
Let $(p,q)$ be a pair of relatively prime positive integers. For $n \geq 1$, we denote the $(np,nq)$-torus link by $T_{np,nq}$. The isotopy classes of Legendrian torus knots (when $n = 1$) were classified by Etnyre and Honda \cite{EtnyreHonda:torusKnots}. In particular, the maximal Thurston--Bennequin number of $T_{p,q}$ is 
\[
  \tbb(T_{p,q}) = pq - p - q
\]
and there exists a unique Legendrian representative with the maximal Thurston--Bennequin number. See the left drawing of Figure~\ref{fig:torus} for a Legendrian right-handed trefoil with the maximal Thurston--Bennequin number. 

\vspace*{0.3cm}
\begin{figure}[htbp]{\scriptsize
  \begin{overpic}[tics=20]{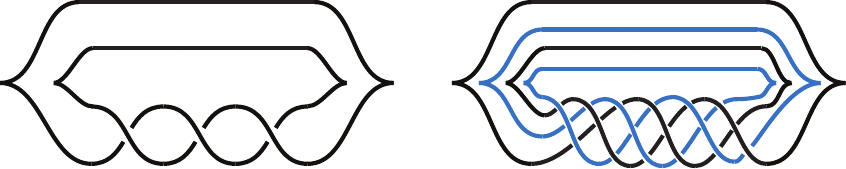}
  \end{overpic}}
  \vspace{0.2cm}
  \caption{Legendrian $(2,3)$-torus knot and $(4,6)$-torus link with the maximal Thurston--Bennequin number.}
  \label{fig:torus}
\end{figure}

The isotopy classes of Legendrian torus links (when $n > 1$) were classified by Dalton, Etnyre and Traynor \cite{DET:torusLinks}. Let $T^i_{p,q}$ be the components of $T_{np,nq}$ for $i = 1,...,n$. Then the maximal Thurston--Bennequin number of $T_{np,nq}$ is 
\[
  \tbb(T_{np,nq}) = \sum_{i=1}^n \tbb(T^i_{p,q}) = n(pq - p - q)
\]
and there exists a unique Legendrian representative with the maximal Thurston--Bennequin number. See the right drawing of Figure~\ref{fig:torus} for a Legendrian $T_{4,6}$ with the maximal Thurston-Bennequin number. The following theorem summarizes the discussions above.

\begin{theorem}[\cite{DET:torusLinks, EtnyreHonda:torusKnots}]\label{thm:pi0-torus}
  For $n \geq 1$, the space of Legendrian embeddings of a positive $(np,nq)$-torus link with the maximal Thurston--Bennequin number is connected. In short, 
  \[
    \pi_0(\L^{\tbb}(T_{np,nq})) = 1.
  \]
\end{theorem}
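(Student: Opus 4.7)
The plan is to reduce both cases to Honda's classification of tight contact structures on solid tori with convex boundary, following the arguments of Etnyre--Honda (for torus knots) and Dalton--Etnyre--Traynor (for torus links).

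For $n=1$, let $L$ be a max-$\tb$ Legendrian representative of $T_{p,q}$ with $p,q>0$. The slice--Bennequin inequality together with $\tb(L)=pq-p-q$ and $g_4(T_{p,q})=(p-1)(q-1)/2$ forces $\rot(L)=0$. I would arrange $L$ to lie as a Legendrian ruling on a standard convex torus $T$, obtained as the boundary of a slightly enlarged standard neighborhood $N(L)$. The complement $S^3\setminus\operatorname{int} N(L)$ is a solid torus with prescribed convex boundary (two dividing curves of computable slope), whose isotopy class of tight contact structures is fully determined by Honda's classification. For two max-$\tb$ representatives $L_0,L_1$, this yields a contactomorphism between the complements of $N(L_i)$ that extends over the standard neighborhoods by the uniqueness of standard tight solid tori. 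Since $\Cont(S^3,\xi_\std)$ is connected by Eliashberg--Mishachev, the resulting global contactomorphism is contact isotopic to the identity and thus provides a Legendrian isotopy from $L_0$ to $L_1$.

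For $n>1$, the additivity formula $\tbb(T_{np,nq})=n\cdot\tbb(T_{p,q})$ implies that each component of a max-$\tb$ Legendrian $T_{np,nq}$ must individually maximize $\tb$ in the knot type $T_{p,q}$. I would proceed by induction on $n$: using the $n=1$ case, isotope one chosen component $L^1$ to a standard position as a Legendrian ruling on a fixed standard convex torus $T$. The remaining components $L^2,\ldots,L^n$ sit in the standard tight solid torus complementary to $N(L^1)$, and by the same convex surface argument they can be placed as disjoint parallel Legendrian rulings on convex tori therein. The induction hypothesis applied to this smaller configuration yields the uniqueness.

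The main obstacle is the simultaneous placement in the link case: isotoping multiple Legendrian components onto a common standard convex torus requires controlling the isotopy discretization (Theorem~\ref{thm:discretization}) so that all intermediate bypass attachments are trivial. This is handled exactly as in the proof of Lemma~\ref{lem:DensityPropertyStdAnnuli}: maximality of $\tb$ on each component rules out boundary-parallel dividing curves on convex annuli cobounded by consecutive components, so no effective bypasses arise and the only possible bypass attachments are trivial ones, which do not alter the contact isotopy class.
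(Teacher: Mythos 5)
The paper does not actually prove this statement: it is quoted directly from Etnyre--Honda (for $n=1$) and Dalton--Etnyre--Traynor (for $n>1$), so there is no in-paper argument to compare against. Judged on its own terms, your reconstruction has a genuine error at its core. You assert that $\NS^3\setminus\operatorname{int}N(L)$ is a solid torus and invoke Honda's classification of tight contact structures on solid tori. For a positive torus knot $T_{p,q}$ with $p,q\geq 2$ the knot complement is \emph{not} a solid torus; it is a Seifert fibered space over the disk with two exceptional fibers of orders $p$ and $q$ (this is exactly Proposition~\ref{prop:complement-torus} in the paper). Honda's solid-torus classification therefore does not apply to $C(L)$, and classifying tight structures on such a Seifert piece with prescribed convex boundary is substantially harder than the solid-torus case. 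The correct route --- and the one Etnyre--Honda actually take --- is to show that a max-$\tb$ representative can be isotoped to a Legendrian ruling curve on a \emph{standard convex Heegaard torus} $T=\partial V$, where $V$ is a standard neighborhood of a $\tb=-1$ unknot; the complement of $T$ consists of two solid tori, and Honda's classification is applied to each of those with the boundary conditions dictated by the dividing set of $T$. Your phrase ``obtained as the boundary of a slightly enlarged standard neighborhood $N(L)$'' also cannot be literally correct, since $L$ cannot be a ruling curve on the boundary of its own tubular neighborhood.

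Two smaller points. First, your deduction $\rot(L)=0$ from the Bennequin-type inequality is fine, but it is not actually needed for connectedness once uniqueness on the Heegaard torus is established. Second, in the $n>1$ induction you never address the fact that the links in this paper are \emph{ordered}: after placing all components as parallel ruling curves on the Heegaard torus one must still realize arbitrary permutations of the components by Legendrian isotopy (the ``swap'' move of Dalton--Etnyre--Traynor, which the paper explicitly invokes in its proof of Theorem~\ref{thm:pi0-Seifert}). Your closing paragraph about ruling out effective bypasses via maximality of $\tb$ is in the right spirit and matches how the paper handles annuli elsewhere (Lemma~\ref{lem:DensityPropertyStdAnnuli}), but it does not repair the misidentification of the complement.
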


It is well known ({\it c.f.}~\cite{GeigesOnaran:nonloose, Martelli:book}) that the complement of a $(np,nq)$-torus link is a Seifert fibration $M(\NS^2_n, -p'/p, -q'/q)$ where $\NS^2_n = \NS^2 \setminus \bigcup_n\D^2$ and $p'$, $q'$ are integers satisfying $pq'+p'q = 1$.

Etnyre, LaFountain and Tosun \cite{ELT:torus} showed that in $(\NS^3,\xi_\std)$ the complement of a standard neighborhood of a Legendrian positive $(p,q)$-torus knot with the maximal Thurston--Bennequin number is a Legendrian Seifert fibration with the twisting number $-(p+q)$. Here, we generalize this result to links by building the model of the complement of a Legendrian positive torus link by following \cite[Section~3.1]{ELT:torus}. 

\begin{figure}[htbp]{\scriptsize
  \begin{overpic}[tics=20]{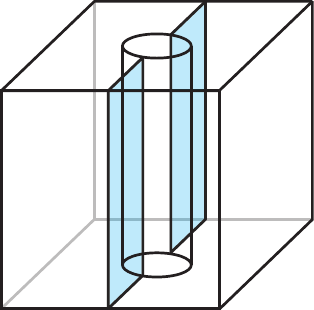}
    \put(35, 88){$A \rightarrow$}
    \put(71, 32){$N$}
  \end{overpic}}
  \vspace{0.2cm}
  \caption{The complement $C$ of a neighborhood of a $(np,nq)$--torus link (here, $n=1$). Each cube is $T^2\times [0,1]$ (the top and bottom are identified as are the front and back) with curves on the right and left collapsed to give $\NS^3$. We see that annulus $A$ separates $C$ into two solid tori.}
  \label{fig:complement}
\end{figure}

Let $H = U_1 \sqcup U_2$ be the Legendrian Hopf link with the maximal Thurston--Bennequin number in $(\NS^3,\xi_\std)$ and $V_1$ and $V_2$ standard neighborhoods of $U_1$ and $U_2$, respectively. Then there is a convex torus $T$ with slope $-1$ in the complement of the Hopf link that separates $\NS^3$ such that $\NS^3 = V_1 \cup (T \times [-1,1]) \cup V_2$ where $T \times [-1,1]$ is an $I$-invariant neighborhood of $T = T \times \{0\}$. Here we use the coordinate system on any torus parallel to $T$ coming from the Seifert framing of $V_1$ (so the meridian of $V_1$ has slope $\infty$ and the meridian of $V_2$ has slope $0$). We may take a Legendrian $(np,nq)$-torus link $L$ that sits on $T$ as ruling curves of slope $p/q$. Let $L_i$ be the components of $L$ for $i = 1,\ldots,n$. Each ruling curve has the maximal Thurston--Bennequin number since $\tb(L_i) = pq - \frac12 |\Gamma \cap L_i|$ and 
\begin{align}\label{eq:intersection}
  |\Gamma \cap L_i|  = 2\left|\frac pq \bigcdot (-1)\right| = 2(p+q)
\end{align}
so $\tb(L_i) = pq - p - q$. Therefore, 
\[
  \tb(L) = n(pq-p-q) = \tbb(T_{np,nq})
\]
so $L$ is a Legendrian torus link with the maximal Thurston--Bennequin number. Let $N = N_1 \sqcup \cdots \sqcup N_n$ be a standard neighborhood of $L$ in $T\times [-1,1]$ and 
\[
  (C(T_{np,nq}), \xi_\std) := (\NS^3\setminus N, \xi_\std|_{\NS^3\setminus N}).
\]
Let $A = A_1 \sqcup \cdots \sqcup A_n := T \cap C(T_{p,q})$, a disjoint union of essential annuli. Then we can consider $T \times [-1,1]$ as the union of $N$ and $N(A)$, a neighborhood of the annuli $A$ in $C(T_{np,nq})$. See Figure~\ref{fig:complement}. We can assume that each boundary component of $A$ is a ruling curve on $T$. Since the ruling curves are Legendrian $(p,q)$-torus knots with the maximal Thurston--Bennequin number, $A$ is $(p+q)$-standard convex annuli by~(\ref{eq:intersection}). Since we can also think $N(A)$ as an $I$-invariant neighborhood of $A$, we have the following model for $C(T_{np,nq})$,
\[
  C(T_{np,nq}) = V_1 \cup N(A) \cup V_2.
\]
Etnyre, LaFountain and Tosun \cite{ELT:torus} also showed that $V_1$ and $V_2$ are fibered by ruling curves except for the cores, which are a Legendrian Hopf link in $(\NS^3,\xi_\std)$. Also, since each $N(A_i)$ is an $I$-invariant neighborhood of a standard convex annulus, it is fibered by vertical ruling curves. The next proposition summarizes the discussions so far.

\begin{proposition}\label{prop:complement-torus}
  Let $(C(T_{np,nq}),\xi_\std)$ be the complement of a standard neighborhood of a Legendrian positive torus link $T_{np,nq}$ in $(\NS^3,\xi_\std)$ with the maximal Thurston--Bennequin number. Then $(C(T_{np,nq}),\xi_\std)$ is a Legendrian Seifert fibration over $\NS^2_n$ with two singular fibers with the twisting number $-(p+q)$.
\end{proposition}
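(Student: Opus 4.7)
The plan is to assemble a Legendrian Seifert fibration on $C(T_{np,nq})$ from the decomposition $C(T_{np,nq}) = V_1 \cup N(A) \cup V_2$ constructed just above the statement, and then to read off the twisting number from the $(p+q)$-standard structure of $A$. Most of the geometric input has already been recorded in the preceding paragraphs; what remains is really a verification.

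First I would equip each piece with a Legendrian $S^1$-fibration. On each solid torus $V_i$, the slope-$p/q$ ruling curves on the convex tori parallel to $\partial V_i$ foliate $V_i \setminus \operatorname{core}(V_i)$, and the core itself is Legendrian, being a component of the max-$\tb$ Hopf link; we declare it a singular fiber. This is exactly the local model used by Etnyre--LaFountain--Tosun \cite{ELT:torus} for the complement of a single torus knot, applied verbatim to each $V_i$. On each slab $N(A_j)\cong A_j\times[-1,1]$, the slope-$p/q$ ruling curves on the parallel tori $T\times\{t\}$ restrict to core circles of the annulus $A_j\times\{t\}$; as $t$ varies, these sweep out a product foliation of $N(A_j)$ by Legendrian circles, giving a trivial Legendrian $S^1$-fibration on each slab.

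Next I would verify that the three partial fibrations glue globally. The boundary tori of $N(A)$ are pieces of $\partial V_1$ and $\partial V_2$, and on each one the fibers agree with the slope-$p/q$ ruling curves of the relevant convex torus, so the pieces assemble into a globally defined Legendrian $S^1$-fibration of $C(T_{np,nq})$ with exactly two singular fibers (the cores of $V_1$ and $V_2$). Quotienting by fibers, each $V_i$ contributes an open disk with one interior marked point and each slab $N(A_j)$ contributes a rectangle; gluing two disks to $n$ rectangles along their boundary arcs produces the base surface $\NS^2_n$ with two singular marked points, matching the topological identification of $C(T_{np,nq})$ as $M(\NS^2_n,-p'/p,-q'/q)$.

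It remains to compute the twisting number of a regular fiber. Since nearby fibers of a regular ruling curve $r\subset T$ lie on tori parallel to $T$ and are isotopic to $r$ along these tori, the vertical framing of $r$ in the Seifert fibration coincides with the surface framing induced by $T$. The intersection computation~(\ref{eq:intersection}) together with the Legendrian realization formula $\tw(r,T) = -|\Gamma_T\cap r|/2$ then yields the claimed twisting number $-(p+q)$. The main (mild) obstacle is confirming that the three partial $S^1$-fibrations assemble into a bona fide $S^1$-action rather than merely a circle foliation, but this is automatic once one parametrizes the fibers via the product structures on $T\times[-1,1]$ and on each solid torus $V_i$ and checks that these parametrizations agree on the overlaps.
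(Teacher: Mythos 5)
Your proposal is correct and follows essentially the same route as the paper: the paper's ``proof'' is precisely the preceding construction of the decomposition $C(T_{np,nq}) = V_1 \cup N(A) \cup V_2$, with the solid tori fibered by ruling curves around their Legendrian cores (citing Etnyre--LaFountain--Tosun) and the slabs $N(A_j)$ fibered as $I$-invariant neighborhoods of the $(p+q)$-standard annuli, the twisting number coming from the intersection count~(\ref{eq:intersection}). You spell out the gluing and the base-surface identification slightly more explicitly than the paper does, but the content is the same.
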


\subsection{Seifert links}\label{subsec:Seifert}
For $n \geq 1$, a \dfn{Seifert link $S_{np,nq}$} is a $(n+1)$-component link that is the union of an unknot $U$, which is a core of a Heegaard torus of $\NS^3$, and a torus link $T_{np,nq}$, which is a $(np,nq)$-cable of $U$ on the Heegaard torus. See the left drawing of Figure~\ref{fig:Seifert} for $S_{2,3}$.

\begin{remark}
  Notice that ``Seifert fibered link'' and ``Seifert link'' are different.
\end{remark}

\begin{figure}[htbp]{\scriptsize
  \begin{overpic}[tics=20]{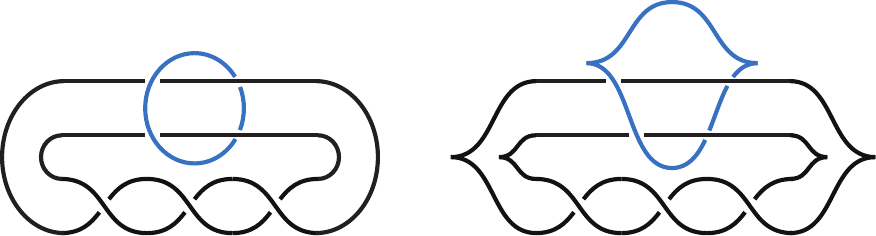}
  \end{overpic}}
  \vspace{0.2cm}
  \caption{Seifert links.}
  \label{fig:Seifert}
\end{figure}

We say $S_{np,nq}$ is \dfn{positive} if $(p,q)$ is a pair of relatively prime positive integers and $n\geq 1$. In this case, the maximal Thurston--Bennequin number of $S_{np,nq}$ is 
\[
  \tbb(S_{np,nq}) = \tbb(U) + \tbb(T_{np,nq}) = n(pq-p-q)-1.
\]
See the right drawing of Figure~\ref{fig:Seifert} for a Legendrian $S_{2,3}$ with the maximal Thurston--Bennequin number. There exists a unique Legendrian representative of $S_{np,nq}$ with the maximal Thurston-Bennequin number up to Legendrian isotopy. For $n=1$, it was shown by Ding and Geiges \cite{DingGeiges:links}.

\begin{theorem}\label{thm:pi0-Seifert}
  For $n \geq 1$, the space of Legendrian embeddings of positive Seifert link $S_{np,nq}$ with the maximal Thurston--Bennequin number is connected. In short,   
  \[
    \pi_0(\L^{\tbb}(S_{np,nq})) = 1.
  \]
\end{theorem}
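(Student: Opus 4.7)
The plan is to split the classification into two parts: first pin down the unknot component via the Eliashberg--Fraser uniqueness of the max-$\tb$ Legendrian unknot, and then classify the torus link component inside the complementary solid torus as a sufficiently positive cable link. This mirrors the case $n=1$ due to Ding--Geiges~\cite{DingGeiges:links}.

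First I would observe that because $\tb$ is additive over link components and the text computes $\tbb(S_{np,nq}) = \tbb(U) + \tbb(T_{np,nq})$, every max-$\tb$ Legendrian representative $L = U_L \sqcup T^L_{np,nq}$ has each component individually at max-$\tb$: $\tb(U_L) = -1$ and each component of $T^L_{np,nq}$ realizes $pq - p - q$. By Eliashberg--Fraser together with contact isotopy extension, after a Legendrian isotopy of $L$ I may arrange that $U_L$ coincides with a fixed standard max-$\tb$ Legendrian unknot $U_0$. Let $N$ be a standard contact neighborhood of $U_0$; the complement $V := \NS^3 \setminus \interior N$ is a standard contact solid torus whose core $U_0'$ is itself a max-$\tb$ Legendrian unknot. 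Viewed from $U_0'$, the component $T^L_{np,nq}$ is smoothly a $(nq, np)$-cable link of $U_0'$, and since $p, q > 0$ and $\tbb(U_0') = -1$, the slope $p/q > 0 = \tbb(U_0') + 1$, so this is a sufficiently positive cable link.

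It then remains to show that $T^L_{np,nq} \subset V$, with each component at max-$\tb$ in $\NS^3$, is Legendrian isotopic rel $\partial V$ to $n$ parallel ruling curves of slope $p/q$ on a standard convex torus in $V$. Using Giroux flexibility together with Colin's isotopy discretization (Theorem~\ref{thm:discretization}) and the triviality of non-effective bypasses in a tight manifold (Lemma~\ref{lem:trivial}), I would push the link onto such a convex torus; the max-$\tb$ hypothesis together with the sufficient positivity rules out any destabilizing bypass along the way, exactly as in the proof of Theorem~\ref{thm:MaxTBCableItsStandard}. Once both representatives are realized as ruling curves on standard convex tori in $V$, uniqueness up to Legendrian isotopy follows from the sufficiently positive cable link classification of~\cite{CEM:cables} (the link-theoretic analogue of Theorem~\ref{thm:LegCables-pi0}), combined with the fact that the contactomorphism group of a standard contact solid torus fixing its boundary is connected (Theorem~\ref{thm:StandardTightHandlebody}).

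The main obstacle is the convex-surface manipulation in the second step: verifying that the max-$\tb$ condition really does force $T^L_{np,nq}$ onto a standard convex torus in $V$ as ruling curves, without disturbing the chosen parametrization of $U_0$. Once this is in hand, the rest of the argument is rigidity for standard contact solid tori plus an appeal to the sufficiently positive cable link classification, both of which are already available in the literature.
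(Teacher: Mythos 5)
Your outline shares the paper's two ingredients (Eliashberg--Fraser for the unknot component, convex tori for the torus-link component), but you run them in the opposite order, and that reversal is exactly where the gap sits. The paper handles the torus-link component \emph{first}: each component of $T_i$ has contact framing $pq-p-q$, which is strictly less than the framing $pq$ induced by the Heegaard torus $\bd V_i$ on which $T_i$ already lies, so $\bd V_i$ can be perturbed to be convex \emph{fixing $T_i$}; the identity $\tb(L_i)=pq-\tfrac12|\Gamma\cap L_i|$ together with \cite[Lemma~2.2]{EtnyreHonda:UTP} then forces the dividing slope to be $-1$, so after Giroux flexibility $\bd V_i$ is a standard convex torus with $T_i$ as ruling curves, i.e.\ $V_i$ is a standard neighborhood of $U_i$. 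Only now is Eliashberg--Fraser invoked: isotoping $U_1$ to $U_2$ carries $V_1$ to $V_2$ and hence carries the ruling curves along, so both torus-link components end up as ruling curves on the same standard torus. No isotopy discretization and no rel-boundary classification in the complement are needed.

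By contrast, your order (pin down $U_0$ first, then classify the cable link in $V=\NS^3\setminus\interior N$ rel $\bd V$) requires a \emph{rel-boundary} connectedness statement for max-$\tb$ sufficiently positive cable \emph{links} in a solid torus. Theorem~\ref{thm:LegCables-pi0} is stated for knots and for isotopy in $\NS^3$, not rel the companion neighborhood, so the ``link-theoretic analogue of \cite{CEM:cables}'' you appeal to is essentially the statement you are trying to prove; and the discretization/bypass argument you flag as ``the main obstacle'' is precisely the content that the paper's dividing-slope computation replaces. Two smaller points: you should justify why the two standard convex tori you produce in $V$ are isotopic to each other rel $\bd V$ (in the paper this is automatic because each is $\bd V_i$ for a standard neighborhood of $U_i$), and since the paper's links are ordered and parametrized you must also address possible permutations of the components of the torus link, which the paper resolves by the swap move of \cite[Proof of Theorem~6.1]{DET:torusLinks} in an $I$-invariant neighborhood of $\bd V_1$.
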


\begin{proof}
  Let $(U_1,T_1)$ and $(U_2,T_2)$ be two Legendrian representatives of an $(np,nq)$-Seifert link with the maximal Thurston--Bennequin number. Here, $U_i$ is a Legendrian unknot with $\tb = -1$ for $i=1,2$, and $T_i$ is a Legendrian positive $(np,nq)$-torus link with the maximal Thurston--Bennequin number. Let $V_i$ be a neighborhoods of $T_i$. After isotopy, we can assume that $T_i$ lies on $\bd V_i$. Each component of $L_i$ has the maximal Thurston--Bennequin number $pq-p-q$, which is less than the framing induced by the torus $\bd V_i$, which is $pq$. Thus we can perturb $\bd V_i$ to be convex while fixing $T_i$. Let $s_i$ be the dividing slope of $\bd V_i$. By \cite[Lemma~2.2]{EtnyreHonda:UTP}, we have
  \[
    pq - p - q = pq - |s_i \bigcdot p/q|.
  \] 
  This implies that $s_i = -1$ and by Giroux flexibility, we may assume that $\bd V_i$ is a standard convex torus and $T_i$ lies on $\bd V_i$ as ruling curves. Thus $V_i$ is a standard neighborhood of $U_i$. By Eliashberg and Fraser \cite{EliashbergFraser:unknot}, the unknot is Legendrian simple and $U_1$ and $U_2$ are Legendrian isotopic. Therefore, there is a contact isotopy from $V_1$ and $V_2$ and we may assume that both $T_1$ and $T_2$ are ruling curves on $\bd V_1$. After isotopy through ruling curves, we can match the two links. However, as ordered Legendrian links, there could be some permutation among the components of the links. In this case, we can swap any two link components of $T_i$ in an $I$-invariant neighborhood of $\bd V_1$ according to \cite[Proof of Theorem~6.1]{DET:torusLinks}, and this completes the proof. 
\end{proof}

Let $C(S_{np,nq}) := \NS^3 \setminus N(S_{np,nq})$, the complement of a neighborhood of a Seifert link $S_{np,nq}$. In Section~\ref{subsec:torus}, we already noticed that the torus link complement is a Seifert fibration with two singular fibers. Since $S_{np,nq}$ is the union of a torus link $T_{np,nq}$ and a singular fiber in the complement, $C(S_{np,nq}) = M(\NS^2_{n+1}, -p'/p)$ where $\NS^2_{n+1} = \NS^2 \setminus \bigcup_{n+1} \D^2$ and $p'$, $q'$ are integers satisfying $pq' + p'q = 1$. The next proposition characterizes the contact structure on the complement of a standard neighborhood of a Legendrian positive Seifert link with the maximal Thurston--Bennequin number. 

\begin{proposition}\label{prop:complement-Seifert}
  Let $(C(S_{np,nq}),\xi_\std)$ be the complement of a standard neighborhood of a Legendrian positive Seifert link $S_{np,nq}$ in $(\NS^3,\xi_\std)$ with the maximal Thurston--Bennequin number. Then $(C(S_{np,nq}),\xi_\std)$ is a Legendrian Seifert fibration over $\NS^2_{n+1}$ with one singular fiber and the twisting number $-(p+q)$.  
\end{proposition}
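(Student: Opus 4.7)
The plan is to deduce this directly from Proposition~\ref{prop:complement-torus} by identifying the unknotted component of $S_{np,nq}$ with one of the singular fibers of the Seifert fibration of the torus-link complement. Recall the model from Section~\ref{subsec:torus}: writing $S^3=V_1\cup (T\times[-1,1])\cup V_2$ where $V_1,V_2$ are standard neighborhoods of the two components $U_1,U_2$ of the max-$\tb$ Legendrian Hopf link and $T$ is a convex torus of slope $-1$, the complement $(C(T_{np,nq}),\xi_\std)$ is a Legendrian Seifert fibration over $\NS^2_n$ with exactly two singular fibers, namely the Legendrian cores of $V_1$ and $V_2$, and with twisting number $-(p+q)$.

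First I would show that, up to Legendrian isotopy, the unknot component $U$ of the Legendrian representative of $S_{np,nq}$ can be taken to coincide with the core of $V_1$. The argument from the proof of Theorem~\ref{thm:pi0-Seifert} already does essentially this: given a max-$\tb$ representative $(U,T_{np,nq})$, one perturbs the boundary of a neighborhood of $U$ containing the torus-link component to be a standard convex torus of slope $-1$, so that $U$ becomes the Legendrian core of a standard neighborhood and $T_{np,nq}$ lies on its boundary as ruling curves. In the Seifert-fibration model of $C(T_{np,nq})$ this standard neighborhood is exactly $V_1$. The uniqueness statement in Theorem~\ref{thm:pi0-Seifert} then lets me identify $(U,T_{np,nq})$ with the chosen model link.

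Next I would observe that the standard contact neighborhood of the max-$\tb$ Legendrian unknot $U$ used to define $C(S_{np,nq})$ coincides with a fibered neighborhood of the corresponding singular fiber of the Legendrian Seifert fibration on $C(T_{np,nq})$. Indeed, a fibered neighborhood of a singular Legendrian fiber is itself a Legendrian Seifert fibration over a disk with one singular fiber, which by Lemma~\ref{lem:lsfs-one-singular} is a standard contact solid torus; after possibly shrinking, this matches the standard neighborhood $V_1$ used above. Removing it therefore produces a Seifert fibration whose base is obtained from $\NS^2_n$ by adding one extra boundary component where the singular fiber used to be, namely $\NS^2_{n+1}$, leaving only the other singular fiber (the core of $V_2$) intact. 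All fibers of the resulting space remain Legendrian with twisting number $-(p+q)$, since the fibration structure is inherited from $(C(T_{np,nq}),\xi_\std)$.

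The only genuinely delicate point is the identification in step two: making sure the standard contact neighborhood of the Legendrian $U$ and the Seifert-fibered neighborhood of the singular fiber are not just abstractly contactomorphic but can be matched inside $C(T_{np,nq})$ after a contact isotopy. This is handled by combining Lemma~\ref{lem:lsfs-one-singular} with the fact, used in Theorem~\ref{thm:pi0-Seifert}, that standard contact solid tori with prescribed convex boundary slope are unique up to contact isotopy rel boundary; all other steps are a direct translation of the base-surface topology.
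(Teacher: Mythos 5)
Your proposal is correct and follows essentially the same route as the paper: the paper's own proof simply observes that $(C(S_{np,nq}),\xi_\std)$ is obtained from the Legendrian Seifert fibration $(C(T_{np,nq}),\xi_\std)$ of Proposition~\ref{prop:complement-torus} by removing a neighborhood of one singular fiber. You have merely filled in the (correct) details of identifying the unknot component with the core of $V_1$ via the argument of Theorem~\ref{thm:pi0-Seifert} and matching its standard neighborhood with a fibered neighborhood of the singular fiber using Lemma~\ref{lem:lsfs-one-singular}.
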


\begin{proof}
  In Proposition~\ref{prop:complement-torus}, we already showed that the complement of a Legendrian torus link with the maximal Thurston--Bennequin number is a Legendrian Seifert fibration with the twisting number $-(p+q)$. Also, $C(S_{np,nq})$ is obtained by removing a neighborhood of a singular fiber and this completes the proof.  
\end{proof}

\subsection{Seifert-Hopf links}\label{subsec:Seifert-Hopf}
For $n \geq 1$, a \dfn{Seifert-Hopf link $R_{np,nq}$} is an $(n+2)$-component link that is the union of a Hopf link $U_1 \sqcup U_2$, which is the cores of solid tori bounded by a Heegaard torus of $\NS^3$, and a torus link $T_{np,nq}$, which is a $(np,nq)$-cable of $U_1$ on the Heegaard torus. See the left drawing of Figure~\ref{fig:Seifert-Hopf} for $R_{2,3}$.

We say $R_{np,nq}$ is \dfn{positive} if $(p,q)$ is a pair of relatively prime positive integers and $n\geq 1$. In this case, the maximal Thurston--Bennequin number of $R_{np,nq}$ is 
\[
  \tbb(R_{np,nq}) = \tbb(U_1) + \tbb(U_2) + \tbb(T_{np,nq}) = n(pq-p-q)-2.
\]
See the right drawing of Figure~\ref{fig:Seifert-Hopf} for Legendrian $R_{2,3}$ with the maximal Thurston-Bennequin number. There exists a unique Legendrian representative of positive Seifert Hopf link $R_{np,nq}$ with the maximal Thurston-Bennequin number up to Legendrian isotopy.

\begin{figure}[htbp]{\scriptsize
  \begin{overpic}[tics=20]{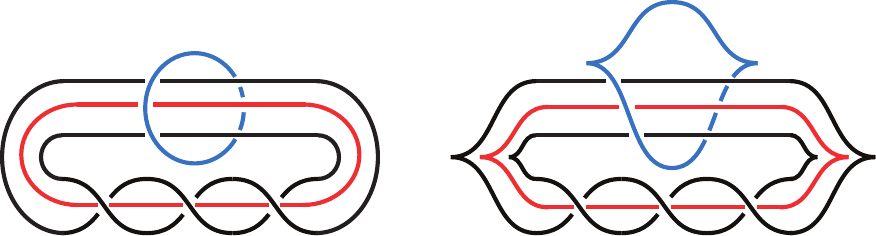}
  \end{overpic}}
  \vspace{0.2cm}
  \caption{Seifert-Hopf links.}
  \label{fig:Seifert-Hopf}
\end{figure}

\begin{theorem}\label{thm:pi0-Sefert-Hopf}
  For $n \geq 1$, the space of Legendrian embeddings of a positive Seifert-Hopf link $R_{np,nq}$ with the maximal Thurston--Bennequin number is connected. In short,
  \[
    \pi_0(\L^{\tbb}(R_{np,nq})) = 1.    
  \]
\end{theorem}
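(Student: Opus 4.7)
The plan is to follow the strategy of Theorem~\ref{thm:pi0-Seifert}, while treating the additional Hopf-link component via the Legendrian simplicity of the $\tb = -1$ Hopf link. Let $(U^1_1, U^1_2, T^1)$ and $(U^2_1, U^2_2, T^2)$ be two max-$\tb$ Legendrian representatives of $R_{np,nq}$: each $U^i_j$ is a Legendrian unknot with $\tb = -1$, and $T^i$ is a Legendrian positive $(np,nq)$-torus link whose $n$ components all realize $\tb = pq - p - q$.

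First, for each $i$, choose a convex neighborhood $V^i_1$ of $U^i_1$ with $T^i \subset \partial V^i_1$. Repeating the argument from the proof of Theorem~\ref{thm:pi0-Seifert}, the dividing-slope formula together with \cite[Lemma~2.2]{EtnyreHonda:UTP} forces the dividing set on $\partial V^i_1$ to have slope $-1$ in the Seifert framing of $U^i_1$, so that $V^i_1$ is a standard neighborhood of $U^i_1$ and $T^i$ appears on $\partial V^i_1$ as slope $p/q$ ruling curves.

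Next, consider the complementary solid torus $W^i := \NS^3 \setminus V^i_1$, which shares the convex Heegaard torus $\partial V^i_1$ with $V^i_1$. Because the two Hopf components are symmetric on the Heegaard torus (the meridian of $U^i_1$ is the longitude of $U^i_2$, and the longitude of $U^i_1$ is the meridian of $U^i_2$), an elementary change-of-coordinates computation shows that the dividing slope is again $-1$ when read in the Seifert framing of $U^i_2$. Hence $W^i$ is also a standard neighborhood of a $\tb = -1$ Legendrian unknot. By the classification of Legendrian Hopf links due to Ding and Geiges \cite{DingGeiges:links}, there is a Legendrian isotopy from $U^1_1 \cup U^1_2$ to $U^2_1 \cup U^2_2$; combining this with the contact isotopy extension theorem and the uniqueness (up to contact isotopy) of standard neighborhoods of Legendrian knots, we may simultaneously arrange $(U^1_1, V^1_1, U^1_2) = (U^2_1, V^2_1, U^2_2) =: (U_1, V_1, U_2)$.

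After these matchings, $T^1$ and $T^2$ both lie on the common convex torus $\partial V_1$ as ruling curves of slope $p/q$. As in the proof of Theorem~\ref{thm:pi0-Seifert}, the underlying unordered links can then be matched through isotopies along ruling curves on $\partial V_1$, and any permutation of the $n$ ordered components is realized by pairwise swaps performed inside an $I$-invariant neighborhood of $\partial V_1$, following \cite[Proof of Theorem~6.1]{DET:torusLinks}; since this $I$-invariant neighborhood is disjoint from both $U_1$ and $U_2$, the Hopf components remain fixed throughout. The main technical point is the dividing-slope identification in the third paragraph, which requires careful bookkeeping of the two Seifert framings on the shared Heegaard torus; once this is established, the rest of the argument reduces to already-available tools (\cite{DET:torusLinks, DingGeiges:links} and Theorem~\ref{thm:pi0-Seifert}).
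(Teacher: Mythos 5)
Your proposal is correct and follows exactly the route the paper intends: the paper's own proof of this theorem is literally ``almost the same as the one given in Theorem~\ref{thm:pi0-Seifert}'' and is left as an exercise, and your argument is that proof with the one genuinely new ingredient supplied, namely that the dividing slope $-1$ on the Heegaard torus is $-1$ in both Seifert framings, so the complementary solid torus is also standard and the Hopf sublink can be matched via Ding--Geiges before running the ruling-curve and swap arguments of Theorem~\ref{thm:pi0-Seifert} away from the two unknot components. No gaps.
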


\begin{proof}
  The proof is almost the same as the one given in Theorem \ref{thm:pi0-Seifert}. We leave it as an exercise to readers.
\end{proof}

Let $C(R_{np,nq}) := \NS^3 \setminus N(R_{np,nq})$, the complement of a neighborhood of a Seifert-Hopf link $R_{np,nq}$. It is the complement of two singular fibers in $C(T_{np,nq})$, so $C(R_{np,nq}) = \NS^2_{n+2} \times \NS^1$ where $\NS^2_{n+2} = \NS^2 \setminus \bigsqcup_{n+2} \D^2$. The next proposition characterizes the contact structure on the complement of a standard neighborhood of a Legendrian positive Seifert-Hopf link with the maximal Thurston--Bennequin number. 

\begin{proposition}\label{prop:complement-Seifert-Hopf}
  Let $(C(R_{np,nq}),\xi_\std)$ be the complement of a standard neighborhood of a Legendrian positive Seifert-Hopf link $R_{np,nq}$ in $(\NS^3,\xi_\std)$ with the maximal Thurston--Bennequin number. Then $(C(R_{np,nq}),\xi_\std)$ is a Legendrian circle bundle over $\NS^2_{n+2}$ with the twisting number $-(p+q)$.
\end{proposition}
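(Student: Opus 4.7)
The plan is to bootstrap from Proposition~\ref{prop:complement-torus} by excising standard neighborhoods of the two additional unknot components $U_1$ and $U_2$ of the Seifert--Hopf link. Recall that Proposition~\ref{prop:complement-torus} exhibits $(C(T_{np,nq}),\xi_\std)$ as a Legendrian Seifert fibration over $\NS^2_n$ with two singular fibers and twisting number $-(p+q)$; geometrically the two singular fibers are the cores of the solid tori in the Heegaard decomposition $\NS^3 = V_1 \cup (T\times[-1,1]) \cup V_2$, which are precisely the unknot components $U_1$ and $U_2$ of the Seifert--Hopf link $R_{np,nq}$.

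First I would check that a max-$\tb$ Legendrian representative of $R_{np,nq}$ is obtained from the max-$\tb$ representative of $T_{np,nq}$ by adjoining the Legendrian cores of the Heegaard tori. In the model used in the proof of Proposition~\ref{prop:complement-torus}, those cores are already Legendrian unknots with $\tb=-1$, hence with the maximal Thurston--Bennequin invariant. By Theorem~\ref{thm:pi0-Sefert-Hopf} (the unique $\pi_0$-class), this is, up to Legendrian isotopy, \emph{the} maximal Thurston--Bennequin Legendrian representative, so we may compute $(C(R_{np,nq}),\xi_\std)$ from this concrete picture.

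Next I would identify the standard neighborhoods of the Legendrian $U_i$'s with neighborhoods of the singular fibers of $C(T_{np,nq})$. By Lemma~\ref{lem:lsfs-one-singular}, a neighborhood of each singular fiber in the Legendrian Seifert fibration on $C(T_{np,nq})$ is a standard contact solid torus, which is exactly the standard contact neighborhood of a $\tb=-1$ Legendrian unknot. Therefore $(C(R_{np,nq}),\xi_\std)$ is obtained from $(C(T_{np,nq}),\xi_\std)$ by removing open standard neighborhoods of both singular fibers. The residual Legendrian fibration has no singular fibers left, so it is a genuine Legendrian circle bundle; its base is $\NS^2_n$ with two extra boundary components, i.e.\ $\NS^2_{n+2}$, and the twisting number of the regular fibers is unchanged, still $-(p+q)$ (this is an intrinsic feature of the regular fibers, unaffected by removing neighborhoods of the singular ones).

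The only nontrivial step is the matching in the second paragraph: one must know that a standard neighborhood of a max-$\tb$ Legendrian unknot in $(\NS^3,\xi_\std)$ agrees, as a germ of contact structure, with the neighborhood of a singular fiber in the Seifert model. This is exactly the content of Lemma~\ref{lem:lsfs-one-singular}, which pins down any Legendrian Seifert fibration over a disk with one singular fiber as the standard contact solid torus; so no further work beyond invoking that lemma should be required.
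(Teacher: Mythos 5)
Your proposal is correct and follows essentially the same route as the paper, which obtains $(C(R_{np,nq}),\xi_\std)$ from the Legendrian Seifert fibration $(C(T_{np,nq}),\xi_\std)$ of Proposition~\ref{prop:complement-torus} by removing standard neighborhoods of the two singular fibers (the Legendrian Hopf-link cores $U_1$, $U_2$), leaving a Legendrian circle bundle over $\NS^2_{n+2}$ with unchanged twisting number $-(p+q)$. Your extra care in matching the standard neighborhood of a $\tb=-1$ unknot with the singular-fiber neighborhood via Lemma~\ref{lem:lsfs-one-singular}, and in invoking Theorem~\ref{thm:pi0-Sefert-Hopf} to reduce to the concrete model, only makes explicit what the paper leaves as an exercise.
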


\begin{proof} 
  The proof is almost the same as the one given in Proposition~\ref{prop:complement-Seifert}. We leave it as an exercise to readers.
\end{proof}

\subsection{Keychain links}\label{subsec:keychain}
For $n\geq2$, a \dfn{keychain link $H_n$} is a connected sum of $(n-1)$ copies of Hopf links, which is a $n$-component link as shown in the left drawing of Figure~\ref{fig:keychain}.

Let $U_i$ be the components of $H_n$ for $i = 1,...,n$. Then the maximal Thurston--Bennequin number of $H_n$ is 
\[
  \tbb(H_n) = \sum_{i=1}^{n} \tbb(U_i) = -n. 
\]
See the right drawing of Figure~\ref{fig:keychain} for Legendrian $H_n$ with the maximal Thurston--Bennequin number. There exists a unique Legendrian representative of a keychain link with maximal Thurston-Bennequin number up to Legendrian isotopy.

\begin{figure}[htbp]{\scriptsize
  \begin{overpic}[tics=20]{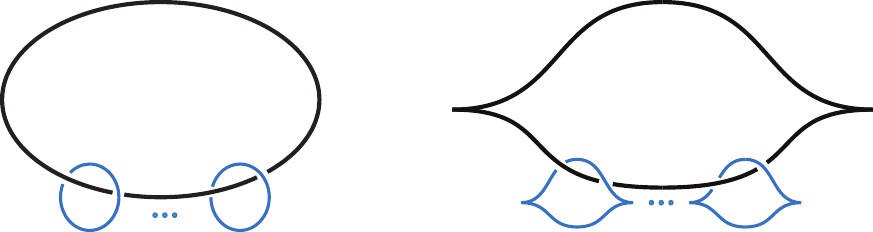}
  \end{overpic}}
  \vspace{0.2cm}
  \caption{Keychain links}
  \label{fig:keychain}
\end{figure}

\begin{theorem}\label{thm:pi0-keychain}
  The space of Legendrian embedding of a keychain link $H_n$ with the maximal Thurston--Bennequin number is connected. In short, 
  \[
    \pi_0(\L^{\tbb}(H_n)) = 1.
  \]   
\end{theorem}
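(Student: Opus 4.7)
The plan is to follow the strategy of Theorem~\ref{thm:pi0-Seifert} and~\ref{thm:pi0-Sefert-Hopf}, realizing $H_n$ as a generalized Legendrian cable of the max-$\tb$ Legendrian unknot $L_0$. Explicitly, I would start with a standard neighborhood $N_0$ of $L_0$ (a standard contact solid torus with convex boundary of dividing slope $-1$) and place $n-1$ parallel Legendrian meridional ruling curves (slope $\infty$) on $\bd N_0$; each such ruling meets the dividing set of $\bd N_0$ in two points and so is a max-$\tb$ unknot, and the resulting link is isotopic to $H_n$. Now given two max-$\tb$ representatives $L=(L_1,\ldots,L_n)$ and $L'=(L_1',\ldots,L_n')$, additivity $\tbb(H_n)=-n$ together with $\tbb(\text{unknot})=-1$ forces each component to have $\tb=-1$. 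The keychain structure smoothly distinguishes a unique \emph{central} component, namely the one with non-zero linking number with every other component; after relabeling I take these to be $L_1$ and $L_1'$. By Eliashberg-Fraser \cite{EliashbergFraser:unknot}, a contact isotopy arranges $L_1=L_1'$ and I fix a common standard neighborhood $N_1$, a standard contact solid torus with convex boundary of slope $-1$. The complement $V:=\NS^3\setminus N_1$ is itself a standard contact solid torus with convex boundary of slope $-1$ in its own product framing.

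Inside $V$, both $(L_2,\ldots,L_n)$ and $(L_2',\ldots,L_n')$ are pairwise disjoint Legendrian unknots bounding meridional disks of $L_1$ — hence Legendrian longitudes of $V$ — with twisting number $-1$, the maximal twisting for longitudes of a standard contact solid torus of slope $-1$. Following the convex-torus argument in the proof of Theorem~\ref{thm:pi0-Seifert}, I would take a standard neighborhood $V_j$ of each $L_j$ in $V$ and perturb $\bd V_j$ to be convex. The analog of the dividing-slope calculation~\eqref{eq:intersection} forces the dividing slope of $\bd V_j$ to be $-1$, so $V_j$ is a standard contact solid torus; a convex-torus thickening argument based on Honda's classification of tight contact structures on solid tori then contact-isotopes each $L_j$ onto $\bd N_1$ as a ruling curve of slope $\infty$, keeping the components pairwise disjoint. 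Performing the same isotopy for $L'$, both links become $L_1$ together with $n-1$ parallel slope-$\infty$ ruling curves on $\bd N_1$.

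The final matching is then standard: any two collections of $n-1$ parallel rulings on a standard convex torus are Legendrian isotopic through rulings, and any permutation of an ordered such collection is realized by contact isotopies supported in an $I$-invariant neighborhood of $\bd N_1$, exactly as in the final step of the proof of Theorem~\ref{thm:pi0-Seifert} and in \cite[Proof of Theorem~6.1]{DET:torusLinks}. The hard part of the argument will be the transfer step in the previous paragraph: isotoping each Legendrian longitude $L_j$ of $V$ onto the common boundary $\bd N_1$ as a ruling curve while keeping all the $L_j$ mutually disjoint. This requires combining Honda's classification of tight contact structures on the standard contact solid torus with the convex-torus perturbation technique of \cite{EtnyreHonda:UTP}, and is the direct analog of the $\bd V_i$ perturbation step in the proof of Theorem~\ref{thm:pi0-Seifert}.
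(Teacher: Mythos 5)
Your overall skeleton (standardize the central $\tb=-1$ component via Eliashberg--Fraser \cite{EliashbergFraser:unknot}, realize the keys as meridional ruling curves on the boundary of its standard neighborhood, then handle the ordering by swaps in an $I$-invariant neighborhood as in \cite{DET:torusLinks}) is the right one, but the step you yourself flag as ``the hard part'' is a genuine gap, and your setup does not actually parallel the proof of Theorem~\ref{thm:pi0-Seifert} in the way you claim. In that proof the multi-component sublink $T_i$ lies on a single smooth torus $\bd V_i$ from the outset (it is a torus link), so one only perturbs that one torus to be convex \emph{rel} the link and reads off its dividing slope; no component is ever moved onto a torus it did not already lie on. Your $V_j$ are tubular neighborhoods of the individual keys, for which the ``dividing-slope calculation'' is vacuous (a standard neighborhood of a $\tb=-1$ unknot has boundary slope $-1$ by definition and tells you nothing about its position relative to $N_1$), and the actual content --- contact-isotoping $n-1$ mutually unlinked Legendrian longitudes of $V$ onto a single torus parallel to $\bd N_1$ as \emph{disjoint} parallel rulings --- is asserted, not proved. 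Keeping several parallel components disjoint during such a repositioning is precisely the kind of difficulty that separates the link classification of \cite{DET:torusLinks} from the knot case, so it cannot be dispatched by citing Honda's classification in passing.

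The fix is to run the true analog of the Seifert-link argument. Since the keys of any representative are smoothly $n-1$ parallel meridians of the central component, they already lie on a single smoothly embedded torus bounding a solid torus $W$ around it; perturb that torus to be convex rel the keys (possible since each key has $\tb=-1<0=$ the framing induced by the torus). Note, however, that the resulting slope computation is weaker than in the Seifert case: the count $|L_j\cap\Gamma_{\bd W}|=2$ only forces the dividing curves of $\bd W$ to have \emph{integer} slope $k$, because a meridian meets a $(1,k)$-curve once for every $k$. To pin down $k=-1$ you must additionally observe that a Legendrian divide of $\bd W$ is an unknot with $\tb=k$, so $k\le -1$ by the Bennequin inequality, while the central component is a Legendrian core of $W$ with twisting $-1$, so $k\ge -1$ since a tight solid torus with two dividing curves of slope $k$ admits no core of twisting greater than $k$. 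Hence $k=-1$, $W$ is a standard neighborhood of the central unknot, and the keys are meridional ruling curves on $\bd W$; from there your Eliashberg--Fraser and permutation steps go through as written.
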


\begin{proof}
  The proof is almost the same as the one given in Theorem \ref{thm:pi0-Seifert}. We leave it as an exercise to readers. 
\end{proof}

Let $C(H_n) := \NS^3 \setminus N(H_n)$, the complement of a neighborhood of a keychain link $H_n$. It is straightforward to see that $C(H_n) = \NS^2_{n} \times \NS^1$ where $\NS^2_{n} = \NS^2 \setminus \bigsqcup_n \D^2$. The next proposition characterizes the contact structure on the complement of a standard neighborhood of a Legendrian positive Seifert-Hopf link with the maximal Thurston--Bennequin number. 

\begin{proposition}\label{prop:complement-keychain}
  Let $(C(H_n),\xi_\std)$ be the complement of a standard neighborhood of a Legendrian keychain link $H_n$ in $(\NS^3,\xi_\std)$ with the maximal Thurston--Bennequin number. Then $(C(H_n),\xi_\std)$ is a Legendrian circle bundle over $\NS^2_n$ with the twisting number $-1$.
\end{proposition}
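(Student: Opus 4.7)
The plan is to induct on $n \geq 2$. For the base case $n=2$, the link $H_2$ is the max-$\tb$ positive Hopf link, and the assertion is exactly the Legendrian circle bundle description of its complement established in~\cite{GirouxMassot:bundles,FMP:unknot}: $(C(H_2),\xi_\std)$ is a Legendrian circle bundle over the annulus $\NS^2_2$ with twisting number $-1$.

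For the inductive step, suppose $\pi\colon(C(H_{n-1}),\xi_\std)\to \NS^2_{n-1}$ is a Legendrian circle bundle with twisting $-1$. Choose a point $p$ in the interior of $\NS^2_{n-1}$, let $F := \pi^{-1}(p)$ be the corresponding Legendrian fiber, and take $N(F) = \pi^{-1}(D)$ to be a fibered standard neighborhood of $F$ over a small disk $D \ni p$. Set $L := H_{n-1} \cup F$, a Legendrian link in $(\NS^3,\xi_\std)$. By construction, $(C(L),\xi_\std)$ is the restriction of $\pi$ to $\NS^2_{n-1}\setminus\operatorname{int}(D) \cong \NS^2_n$, which is again a Legendrian circle bundle with twisting $-1$. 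In view of Theorem~\ref{thm:pi0-keychain} (Legendrian simplicity of $H_n$), it suffices to show that $L$ is smoothly isotopic to $H_n$ and has total Thurston--Bennequin number $-n$: then $L$ will be Legendrian isotopic to the unique max-$\tb$ representative of $H_n$, yielding the desired contactomorphism of complements.

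The Thurston--Bennequin count is immediate modulo knowing that $F$ is an unknot: by hypothesis $H_{n-1}$ contributes $-(n-1)$, and since the twisting of $F$ with respect to the fibration framing equals its Seifert framing once $F$ is shown to be unknotted, $\tb(F)=-1$. To see that $F$ is unknotted I reattach the standard solid-torus neighborhoods $N(U_i)$ of the components of $H_{n-1}$ to $C(L)$, recovering $\NS^3 \setminus N(F)$; the inductive description of the filling slopes along the $\bd N(U_i)$, together with the fibration structure, exhibits this as a standard solid torus. For the smooth link-type identification, the complement $C(L) \cong \NS^2_n \times \NS^1$ equipped with the Dehn filling slopes inherited from $\NS^3$ coincides with the Seifert fibered description of the keychain link $H_n$, so $L$ is smoothly isotopic to $H_n$ by the Burde--Murasugi classification of Seifert fibered links~\cite{BurdeMurasugi:SFlink}.

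The main obstacle is this final smooth identification of $L$ with $H_n$: distinct links can share the same complement, so one must verify that the Dehn filling slopes along all $n$ boundary tori of $C(L)$ match those of $H_n$. Once the inductive Legendrian circle bundle framework is in place, the slopes along $\bd N(U_i)$ are inherited unchanged from the inductive description of $H_{n-1}$, while the new slope at $\bd N(F)$ is canonically determined as the meridian of a regular fiber; the check then reduces to a direct comparison with the Seifert invariants of the keychain.
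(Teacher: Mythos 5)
Your proposal is correct and takes essentially the same route as the paper, which leaves this proof as an exercise ``almost the same as'' Proposition~\ref{prop:complement-Seifert}: namely, remove fibered standard neighborhoods of Legendrian regular fibers from an already-identified Legendrian circle bundle and invoke the uniqueness of the max-$\tb$ representative (Theorem~\ref{thm:pi0-keychain}); your induction on $n$ starting from the Hopf link complement is exactly this argument carried out one fiber at a time. The only stylistic difference is that you identify $H_{n-1}\cup F$ with $H_n$ smoothly via \cite{BurdeMurasugi:SFlink} and a comparison of filling slopes, whereas this follows directly from the fact that $F$ is a regular fiber of the inductively given fibration, hence a meridian of the ring component unlinked from the remaining keys.
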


\begin{proof}
  The proof is almost the same as the one given in Proposition~\ref{prop:complement-Seifert}. We leave it as an exercise to readers.
\end{proof}

\subsection{Proof of Theorem \ref{thm:main-SFlinkS3}}
Let $L\subseteq (\NS^3,\xi_\std)$ be a Legendrian positive Seifert fibered link with the maximal Thurston-Bennequin invariant. It follows from Propositions \ref{prop:complement-torus}, \ref{prop:complement-Seifert}, \ref{prop:complement-Seifert-Hopf} and \ref{prop:complement-keychain} that $(C(L),\xi_\std)$ is a Legendrian Seifert fibered space. It follows from Theorem \ref{thm:Main-LSB} that $L$ satisfies the $C$-property. By Lemma \ref{lem:LegVSCont}, Corollary \ref{cor:lsfs-pure-braid} and the homotopy equivalence (\ref{eq:HomotopyEquivalenceLongLinks}), we can determine the higher homotopy groups of the space $\L^{\tbb}(L)$. Combining it with Theorem \ref{thm:pi0-torus}, \ref{thm:pi0-Seifert}, \ref{thm:pi0-Sefert-Hopf} and \ref{thm:pi0-keychain}, we can conclude the proof. \qed

\section*{Declarations}
\textbf{Conflict of interest} The authors declare that they have no conflict of interest.

\bibliographystyle{plain}
\bibliography{references}
\end{document}